\newtheorem{thms}{Theorem}
\newtheorem{remark}{Remark}
\newtheorem{exam}{Example}
\newtheorem{defn}{Definition}
\newtheorem{prop}{Proposition}
\newtheorem{conc}[prop]{Corollary}
\newtheorem{lemma}[prop]{Lemma}
\theoremstyle{remark}
\newcommand{\Z}{\mathbb{Z}}
\newcommand{\PZ}{\mathbb{Z}^{+}}
\newcommand{\F}{\mathbb{F}}
\newcommand{\N}{\mathbb{N}}
\DeclareMathOperator{\PI}{PI}
\DeclareMathOperator{\SP}{SP}
\newcommand{\scell}[2][c]{\begin{tabular}[#1]{@{}c@{}}#2\end{tabular}}
\journal{}
\begin{document}

\begin{frontmatter}



\title{The Zero-Difference Properties of Functions and Their Applications}


\author[GPNU,GZHU1,GZHU2]{Zongxiang~Yi\corref{YZX}}
\cortext[YZX]{Corresponding author at: School of Mathematics and Systems Science, Guangdong Polytechnic Normal University, Guangzhou, 510006, P.R. China.}
\ead{tpu01yzx@gmail.com}

\author[GZHU1,GZHU2]{Dingyi~Pei}
\ead{dypei4188@163.com}

\author[GZHU1,GZHU2]{Chunming~Tang}
\ead{ctang@gzhu.edu.cn}

\address[GPNU]{School of Mathematics and Systems Science, Guangdong Polytechnic Normal University, Guangzhou, 510006, P.R. China.}
\address[GZHU1]{School of Mathematics and Information Science, Guangzhou University, Guangzhou, 510006, P.R. China.}
\address[GZHU2]{Guangdong Provincial Key Laboratory of Information Security, Guangzhou University, Guangzhou, 510006, P.R. China.}

\begin{abstract}
A function $f$ from an Abelian group $(A,+)$ to an Abelian group $(B,+)$ is $(n, m, S)$ zero-difference (ZD), if $S=\{\lambda_\alpha \mid \alpha \in A\setminus\{0\}\}$ where $n=|A|$, $m=|f(A)|$ and $\lambda_\alpha=|\{x \in A \mid f(x+\alpha)=f(x)\}|$. A function is called zero-difference balanced (ZDB) if $S=\{\lambda\}$ where $\lambda$ is a constant number. ZDB functions have many good applications. However it is point out that many known zero-difference balanced functions are already given in the language of partitioned difference family (PDF). The problem that whether zero-difference ``not balanced" functions still have good applications as ZDB functions, is investigated in this paper. By using the change point technic, zero-difference functions with good applications are constructed from known ZDB functions. Then optimal difference systems of sets (DSS) and optimal frequency-hopping sequences (FHS) are obtained with new parameters. Furthermore the sufficient and necessary conditions of these objects being optimal, are given.
\end{abstract}

\begin{keyword}
Constant Weight Code \sep Difference System of Sets \sep Frequency-Hopping Sequence \sep Zero-Difference \sep Zero-Difference Balanced


\end{keyword}

\end{frontmatter}

\section{Introduction}
Let $(A,+)$ and $(B, +)$ be two finite Abelian groups. Denote $A^*=A\setminus\{0\}$. A function from $A$ to $B$ is an $(n,m,\lambda)$ zero-difference balanced (ZDB) function, if there exists a constant number $\lambda$ such that for any nonzero element $a \in A^*$,
$$|\{x \in A \mid f(x+a)-f(x)=0 \}|=\lambda,$$
where $n=|A|$ and $m=|f(A)|$. \citeauthor{carlet2004highly} first proposed the concept of ZDB function in \citeyear{carlet2004highly}~\cite{carlet2004highly}. Some optimal objects can be obtained by ZDB functions, such as constant composition codes (CCC), constant weight codes (CWC), difference systems of sets (DSS) and frequency-hopping sequences (FHS). Hence many researchers have been working on this topic (see~\cite{carlet2004highly, ding2008optimal, ding2009optimal, zhou2012some, wang2014sets, cai2013new, ding2014three, zha2015cyclotomic, zhifan2015zero, cai2017zero, yi2018generic, yi2019note, liu2019new, xu2024preimage} and the references therein).

A difference family (DF) is collection $\mathcal{F}$ of some nonempty subsets (blocks) $\mathcal{F}_i$ of a group $(G,+)$ such that for any nonzero elements $\alpha$ of $G$, there has exactly $\lambda$ representations of $\alpha$ as a difference of two elements of the same block. If $\mathcal{F}$ is a partition of $G$, then $\mathcal{F}$ is called a  partitioned difference family (PDF). It is well known that a ZDB function is equivalent to a PDF. Many PDFs were constructed since 1970. Thus before a ZDB function is claimed to be new, the authors should check carefully whether the ZDB function is equivalent to an already known PDF. For example, \citeauthor{buratti2019partitioned} recently point out that many recent results on ZDB functions reproduced the earlier results on PDFs~\cite{buratti2019partitioned} and comment that some of these results (even using the terminology of PDFs) are often trivial or even wrong~\cite{buratti2021partitioned}. Since this paper does not concern how to construct PDF, for more details of PDF, the reader is referred to \cite{wilson1972cyclotomy, furino1991difference, buratti20101, fuji2002complete, li2017generic, buratti2019disjoint, buratti2019hadamard, buratti2023partitioned, nakic2023hadamard} and the references therein. In the view of definition, ZDB function has more constraints and PDF is more cleaner. As a result, ZDB function has more useful properties and can be applied to complicated applications. For example, when the underlying group is cyclic, frequency-hopping sequences can be obtained from the blocks of a PDF. But it is impossible to consider the linear complexity of FHSs without equipping an binary operation over blocks. That is why we use the terminology of ZDB function rather than the terminology of PDF when considering further applications.

To construction more optimal objects, we turn to study the generalization of ZDB function, i.e., relaxing the ``balanced" condition on ``zero-difference". There are some work on the generalization of ZDB function. In \citeyear{carlet2014quadratic}, \citeauthor{carlet2014quadratic} proposed an concept called differentially $\delta$-vanishing~\cite{carlet2014quadratic}. A function from $A$ to $B$ is differentially $\delta$-vanishing, if for any nonzero element $a \in A^*$,
$$1\le |\{x \in A \mid f(x+a)-f(x)=0 \}| \le \delta.$$
Any $(n,m,\lambda)$ ZDB function is differentially $\lambda$-vanishing ($\lambda$-DV). But there had been little research on this concept, until \citeauthor{jiang2016generalized} proposed a related concept called generalized zero-difference balanced (G-ZDB) function in \citeyear{jiang2016generalized}~\cite{jiang2016generalized}.
A function from $A$ onto $B$ is an $(n, m, S)$ generalized zero-difference balanced function, if there exists a constant set $S \subset \N$ such that for any nonzero element $a \in A^*$,
$$|\{x \in A \mid f(x+a)-f(x)=0 \}|\in S,$$
where $n=|A|$ and $m=|f(A)|$. Then some objects can be obtained by G-ZDB functions~\cite{jiang2016generalized, Jiang2016New, liu2016some}, but they are not optimal. The main reason is that the size of $S$ is too large, which implies they are not really zero-difference balanced. Therefore, in \citeyear{xu2018optimal}, \citeauthor{xu2018optimal} gave another concept called near zero-difference balanced (N-ZDB) function~\cite{xu2018optimal}.
A function from $A$ to $B$ is an $(n, m, \lambda, t)$ near zero-difference balanced function, if there exist a constant number $\lambda$ and a $t$-subset $T$ of $A^*$ such that for any nonzero element $a \in A^*$,
$$|\{x \in A \mid f(x+a)-f(x)=0 \}| = \begin{cases}
\lambda, & \mbox{if $a \in T$,} \\
\lambda +1, & \mbox{if $a \notin T,$}
\end{cases}$$
where $n=|A|$ and $m=|f(A)|$.

The four concepts ZDB, $\lambda$-DV, G-ZDB and N-ZDB, are close related to each other. Among them, G-ZDB captures most useful information. However the concept of G-ZDB would lead to the misunderstanding that such G-ZDB functions are really ``balanced". Thus the concept of zero-difference (ZD) is used instead of G-ZDB.
\begin{defn}
A function $f$ from an Abelian group $(A,+)$ to an Abelian group $(B,+)$ is $(n, m, S)$ zero-difference (ZD), if $S=\{\lambda_\alpha \mid \alpha \in A\setminus\{0\}\}$ where $n=|A|$, $m=|f(B)|$ and $\lambda_\alpha=|\{x \in A \mid f(x+\alpha)=f(x)\}|$. A function is called an $(n, m, S)$ ZD function if it is $(n, m, S)$ ZD.
\end{defn}

Another concept called ``partition-type difference packing", proposed by~\citeauthor{fuji2004optimal} in~\citeyear{fuji2004optimal}~\cite{fuji2004optimal}, should be introduced for its close connection to ZD functions. A difference packing (DP) is a collection $\mathcal{F}$ of some nonempty subsets (blocks) $\mathcal{F}_i$ of a group $(G,+)$ such that for any nonzero element $\alpha$ of $G$, there are at most $\lambda$ representations of $\alpha$ as a difference of two elements of the same block. If $\mathcal{F}$ is a partition of $G$, then $\mathcal{F}$ is called a partition-type $m$-difference packing (PDP) with parameters $(n, \mathcal{K}, \lambda)$, where $n=|G|$, $m=|\mathcal{F}|$ and $\mathcal{K}=\{|F_i| \mid F_i \in \mathcal{F}\}$. Whereas a ZD function highlights only the size of $\mathcal F$, a difference packing gives full information about the multiset of its block sizes. The partition $\mathcal F$ can be also viewed as the set of preimages of a ZD function. There are cases where, given a ZD function, it is difficult to establish the set of its preimages, i.e., its associated PDF. See~\cite{xu2024preimage} for an interesting paper on this matter. For the differences, ZD function concerns the different numbers of representations for each $\alpha$ while difference packing concerns the maximum number. Hence it is obvious that an $(n,m,S)$ ZD function is equivalent to an $(n, \mathcal{K}, \lambda)$ PDP. For the classical constructions of DPs, the reader is referred to the book~\cite[Section~19.4]{colbourne2007handbook} and the recent paper~\cite{zhang2022existence}. In most of the traditional DPs it is not required that the blocks partition the underlying group G and thus they have the default parameter $\lambda=1$. However, all the ZD functions (equivalently PDP) in this paper have $\lambda > 1$.


Recently \citeauthor{xu2018optimal} and \citeauthor{li2019new} shown that optimal DSSs and FHSs can be obtained via ZD functions in 2018, respectively~\cite{xu2018optimal,li2019new}. Thus we will study the ZD property of cryptographic functions with good applications. Specifically, our contributions are as follows:
\begin{itemize}
\item We propose a framework to construct ZD functions from some type of ZDB functions, namely the change point technic;
\item We obtain optimal CWCs , DSSs and FHSs from ZD functions and generalize the work in~\cite{xu2018optimal};
\item We give the sufficient and necessary conditions of ZD functions having good applications;
\end{itemize}

The rest of this paper is organized as follows. In Section~\ref{se:ogr}, some properties of ZD functions are given and many ZD functions are constructed from some known ZDB functions. In Section~\ref{se:app}, the conditions of the constructed ZD functions having good applications are studied and many optimal objects are obtained. Section~\ref{se:con} conclusions this paper.

\section{A Framework to Construct Zero-Difference Functions}\label{se:ogr}
\subsection{Notations}
Let $(R,+,\times)$ be a ring with an identity. $R^\times$ denote the set of all invertible elements in monoid $(R,\times)$. For a set $A$, define $A^*=A\setminus\{0\}$ if $A$ is a subset of an additive group.

Operations on sets are defined as usual. For any two set $A, B$, $A + B=\{a + b\mid a\in A, b\in B\}$ if $a + b$ is well defined. For any element $x$ and any set $A$, $x+A=\{x+a\mid a\in A\}$, $xA=\{xa \mid a \in A\}$. Similarly, $A-B$, $x-A$, $Ax$, $A-x$ and $A+x$ can be defined in this manner.

To resolve the ambiguity, several notions are defined. The size of a set $S$ is denoted by $|S|$. Let $f$ be a function from $A$ to $B$. The set of all images is denoted by $f(A)=\{f(x)\mid x \in A\}$. The preimage of an element $b\in B$ is denoted by $f^{-1}(b)=\{x\in A \mid f(x)=b\}$. The set of all preimages is denoted by $\PI(f)=\{f^{-1}(b) \mid b \in B\}$. The multi-set of sizes of all preimage is denoted by $\SP(f)=\{|S| \mid S\in \PI(f)\}$. Using the language of combinatorics, a multi-set can be represented as $\{r_1^{e_1},r_2^{e_2},\ldots, r_k^{e_k}\}$ which means that $r_i$ appears exactly $e_i$ times. If $e_i=1$, then $e_i$ can be omitted.

\subsection{Some Properties of Zero-Difference functions}
In this subsection, let $f$ be an $(n, m, S)$ ZD function from $(A, +)$ onto $(B, +)$. Define $r_b=|f^{-1}(b)|$, for every $b \in B$. Define $\lambda_\alpha=|\{x \in A \mid f(x+\alpha)=f(x)\}|$, for every nonzero element $\alpha \in A^*$. Denote $\lambda=\max_{i\in S}{i}$ and $\mu=\min_{i\in S}{i}$.

The following lemma follows directly from the definition of ZD property.
\begin{lemma}\label{lm:property_A_of_ZD}
Notations are as above. Then
$$\sum_{b\in B}{r_b (r_b - 1)}= \sum_{\alpha \in A^*}{\lambda_\alpha},$$
\end{lemma}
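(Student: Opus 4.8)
The plan is to prove the identity by double counting the set of ordered pairs
$$T = \{(x,y) \in A\times A \mid x\neq y,\ f(x)=f(y)\}.$$
Both sides of the claimed equality will turn out to be $|T|$, so exhibiting the two counts and comparing them finishes the proof.

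First I would count $|T|$ by grouping the pairs according to their common image value. For a fixed $b\in B$, the pairs $(x,y)\in T$ with $f(x)=f(y)=b$ are exactly the ordered pairs of distinct elements of the fibre $f^{-1}(b)$, of which there are $r_b(r_b-1)$. Since the fibres $f^{-1}(b)$ partition $A$ as $b$ ranges over $B$, summing over $b$ yields $|T|=\sum_{b\in B} r_b(r_b-1)$, the left-hand side.

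Next I would count $|T|$ by grouping the pairs according to their difference. The substitution $\alpha = y-x$ sets up a bijection between $T$ and the set $\{(x,\alpha)\in A\times A^*\mid f(x+\alpha)=f(x)\}$: for $(x,y)\in T$ we have $\alpha=y-x\in A^*$ precisely because $x\neq y$, and then $f(x+\alpha)=f(y)=f(x)$; conversely any $(x,\alpha)$ with $\alpha\neq 0$ and $f(x+\alpha)=f(x)$ recovers $y=x+\alpha\neq x$ with $f(x)=f(y)$. For each fixed $\alpha\in A^*$ the number of admissible $x$ is by definition $\lambda_\alpha$, so $|T|=\sum_{\alpha \in A^*}\lambda_\alpha$, the right-hand side. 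Comparing the two counts gives the lemma.

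I do not expect a real obstacle here, since the argument is a clean double count. The only point that genuinely uses the group structure (rather than merely the partition into fibres) is the change of variables $y=x+\alpha$, and the one thing worth checking carefully is that this is a genuine bijection which carries the constraint $x\neq y$ exactly to $\alpha\neq 0$; everything else is routine bookkeeping.
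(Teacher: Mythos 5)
Your proof is correct and takes essentially the same approach as the paper: the paper directly double-counts the set $\{(x,\alpha)\in A\times A^*\mid f(x+\alpha)-f(x)=0\}$, which is exactly the image of your set $T$ under the change of variables $\alpha=y-x$. The only difference is presentational—you name the intermediate set of ordered pairs with equal images and make the bijection explicit, while the paper folds that bijection into its fibre-wise count of $r_b-1$ admissible values of $\alpha$ for each $x$ with $f(x)=b$.
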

\begin{proof}
Consider the following set
$$\{ (x, \alpha)\in A\times A^* \mid f(x+\alpha)-f(x)=0 \}.$$
On one hand, given $\alpha \in A^*$, the number of $x$ satisfying the equation is $\lambda_\alpha$. It equals the right hand side when $\alpha$ runs over $A^*$. On the other hand, given $b\in B$, the number of $x$ satisfying $f(x)=b$ is $r_b$ and the number of $\alpha$ satisfying the equation is $r_b-1$ for each $x$. It leads to the left hand side when $b$ runs over $B$.
\end{proof}

Let $\overline{\lambda}$ be the arithmetic average of the multi-set $\{\lambda_\alpha \mid \alpha \in A^*\}$, i.e.,
\begin{equation}\label{eq:define_of_avg_lambda}
\overline{\lambda}=\frac{1}{n-1}\sum_{\alpha \in A^*}{\lambda_\alpha}.
\end{equation}
Based on Lemma~\ref{lm:property_A_of_ZD}, we have the following lower bound on $\lambda$.
\begin{lemma}\label{lm:property_B_of_ZD}
Notations are as above. Then
\begin{equation}\label{eq:inequality_of_lambda}
\lambda\ge \left\lceil \frac{(n-\epsilon)(n+\epsilon-m)}{m(n-1)} + \lambda-\overline{\lambda} \right\rceil,
\end{equation}
where $n=km+\epsilon$ with $0\le \epsilon<m$. In particular,
$$\lambda=\frac{(n-\epsilon)(n+\epsilon-m)}{m(n-1)} + \lambda-\overline{\lambda},$$
if and only if, for $b\in B$, $r_b=k$ for $m-\epsilon$ times and $r_b=k+1$ for the other $\epsilon$ times.
\end{lemma}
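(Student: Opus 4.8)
The plan is to translate the statement into a pure optimization question about the preimage sizes $r_b$ and then invoke a convexity (smoothing) argument. Starting from Lemma~\ref{lm:property_A_of_ZD} together with the definition~\eqref{eq:define_of_avg_lambda} of $\overline{\lambda}$, I would first record the identity
$$\sum_{b\in B} r_b^2 = n + (n-1)\overline{\lambda},$$
obtained by expanding $\sum_{b} r_b(r_b-1)=\sum_{b} r_b^2 - \sum_{b} r_b$ and using $\sum_{b\in B} r_b = |A| = n$. Since $f$ is onto $B$, there are exactly $m$ terms $r_b$, each a positive integer. Thus bounding $\overline{\lambda}$ from below is equivalent to bounding $\sum_{b} r_b^2$ from below over all ways of writing $n$ as a sum of $m$ positive integers.

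Next I would establish that $\sum_{b} r_b^2$ is minimized precisely when the parts are as equal as possible. This is the standard exchange argument: if two parts satisfy $r_i \ge r_j + 2$, then replacing the pair $(r_i, r_j)$ by $(r_i - 1, r_j + 1)$ preserves the total $n$ while strictly decreasing the sum of squares, because $(r_i-1)^2 + (r_j+1)^2 - r_i^2 - r_j^2 = -2(r_i - r_j - 1) < 0$. Hence the minimum is attained, uniquely up to relabeling, by the balanced distribution in which $r_b = k+1$ for $\epsilon$ indices and $r_b = k$ for the remaining $m - \epsilon$ indices, where $n = km + \epsilon$.

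I would then carry out the (routine) algebra: the minimal value is $(m-\epsilon)k^2 + \epsilon(k+1)^2 = mk^2 + 2\epsilon k + \epsilon$, so that $\min \sum_{b} r_b^2 - n = mk^2 - km + 2\epsilon k$, and substituting $km = n - \epsilon$ collapses this to $\frac{(n-\epsilon)(n+\epsilon-m)}{m}$. Dividing the resulting inequality $(n-1)\overline{\lambda} \ge \frac{(n-\epsilon)(n+\epsilon-m)}{m}$ by $n-1$ yields $\overline{\lambda} \ge \frac{(n-\epsilon)(n+\epsilon-m)}{m(n-1)}$. Finally, since $\lambda = \max_{\alpha} \lambda_\alpha$ is an integer and the last inequality gives $\frac{(n-\epsilon)(n+\epsilon-m)}{m(n-1)} + \lambda - \overline{\lambda} \le \lambda$, I conclude $\big\lceil \frac{(n-\epsilon)(n+\epsilon-m)}{m(n-1)} + \lambda - \overline{\lambda}\big\rceil \le \lambda$, which is the claimed bound.

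For the equality characterization, observe that $\lambda = \frac{(n-\epsilon)(n+\epsilon-m)}{m(n-1)} + \lambda - \overline{\lambda}$ holds exactly when $\overline{\lambda} = \frac{(n-\epsilon)(n+\epsilon-m)}{m(n-1)}$, i.e. when $\sum_{b} r_b^2$ attains its minimum. By the strictness of the exchange step, this occurs if and only if no two parts differ by more than one, which is precisely the stated balanced condition ($r_b = k$ for $m-\epsilon$ values of $b$ and $r_b = k+1$ for the other $\epsilon$). The only genuinely delicate points are the clean algebraic collapse to $\frac{(n-\epsilon)(n+\epsilon-m)}{m}$ and the integrality step that legitimizes inserting the ceiling; both are elementary, so I expect no real obstacle beyond careful bookkeeping.
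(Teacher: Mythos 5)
Your proposal is correct and follows essentially the same route as the paper's proof: both use Lemma~\ref{lm:property_A_of_ZD} to rewrite the problem as bounding $\sum_{b\in B} r_b^2$ below subject to $\sum_{b\in B} r_b = n$, identify the near-balanced distribution ($r_b=k$ for $m-\epsilon$ values, $r_b=k+1$ for $\epsilon$ values) as the minimizer, and compute the minimum to be $\frac{(n-\epsilon)(n+\epsilon-m)}{m}+n$. The only difference is one of rigor, not of method: you justify the minimization claim with an explicit exchange (smoothing) argument and spell out the integrality step behind the ceiling, where the paper simply appeals to ``integral programming'' and ``simple calculations.''
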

\begin{proof}
According to Lemma~\ref{lm:property_A_of_ZD}, we have
$$\sum_{b\in B}{r_b^2}-n= \sum_{\alpha \in A^*}{\lambda_\alpha}=(n-1)\overline{\lambda}=(n-1)(\lambda-\lambda+\overline{\lambda}).$$
Moreover, we have
$$\sum_{b\in B}{r_b^2}-n\ge \min_{r_b,b\in B}\sum_{b\in B}{r_b^2}-n$$
Note that $\sum_{b\in B}{r_b}=n$. By integral programming, $\{r_b\mid b \in B\}$ attains the minimum value, if and only if, $f$ is as balanced as possible. Note that $n=km+\epsilon$. We have
$$\min_{r_b,b\in B}\sum_{b\in B}{r_b^2}-n=(m-\epsilon)k^2+\epsilon (k+1)^2-n=\frac{(n-\epsilon)(n+\epsilon-m)}{m}.$$
It completes the proof by some simple calculations.
\end{proof}

Using Lemma~\ref{lm:property_A_of_ZD}, we can also obtain the bounds on the size of preimage sets. The sizes of all preimage sets are important in some applications, such as construct constant composition codes and difference systems of sets.
\begin{lemma}\label{lm:property_C_of_ZD}
Notations are as above. Then for each $b\in B$,
$$\frac{n-\sqrt{\Delta}}{m}\le r_b \le \frac{n+\sqrt{\Delta}}{m},$$
where $\Delta=(n+\lambda n-\lambda)m^2-(n^2+n+\mu n-\mu)m+n^2$.
\end{lemma}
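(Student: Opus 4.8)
The plan is to isolate a single preimage size $r_b$ and exhibit it as lying between the two roots of a quadratic whose discriminant is exactly $\Delta$. Fix $b \in B$. Since $\sum_{b' \in B} r_{b'} = n$, the remaining $m-1$ sizes sum to $n - r_b$, so the Cauchy--Schwarz inequality gives $(n - r_b)^2 = \big(\sum_{b' \ne b} r_{b'}\big)^2 \le (m-1)\sum_{b' \ne b} r_{b'}^2 = (m-1)\big(\sum_{b' \in B} r_{b'}^2 - r_b^2\big)$. Rearranging and expanding $(n-r_b)^2$ turns this into $m\,r_b^2 - 2n\,r_b + n^2 \le (m-1)\sum_{b' \in B} r_{b'}^2$, which already has the shape of a quadratic $m x^2 - 2nx + n^2$ whose roots will be of the form $\frac{n \pm \sqrt{\cdot}}{m}$.

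Next I would feed Lemma~\ref{lm:property_A_of_ZD} into the right-hand side. Since $\sum_{b' \in B} r_{b'}^2 = n + \sum_{\alpha \in A^*} \lambda_{\alpha}$ and each $\lambda_\alpha$ lies in the interval $[\mu, \lambda]$, the total $T = \sum_{\alpha \in A^*}\lambda_\alpha$ satisfies $(n-1)\mu \le T \le (n-1)\lambda$. The key step is to bound $(m-1)(n+T)$ \emph{asymmetrically}: writing $(m-1)(n+T) = m(n+T) - (n+T)$, I would use $T \le (n-1)\lambda$ on the term $m(n+T)$ and $T \ge (n-1)\mu$ on the subtracted term $-(n+T)$, obtaining $(m-1)(n+T) \le m\big(n + (n-1)\lambda\big) - \big(n + (n-1)\mu\big)$. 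Substituting this back produces $m\,r_b^2 - 2n\,r_b + \big[\,n^2 - m\big(n+(n-1)\lambda\big) + \big(n+(n-1)\mu\big)\,\big] \le 0$.

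Finally I would solve this quadratic in $r_b$. A direct computation shows that its discriminant equals $4\Delta$ with $\Delta$ exactly as stated, so the two roots are $\frac{n \pm \sqrt{\Delta}}{m}$; since the leading coefficient $m$ is positive, the inequality forces $r_b$ to lie between the roots, which is the claim. The main obstacle, and the only nonroutine idea, is the asymmetric bounding in the middle step: estimating both occurrences of $T$ by $\lambda$ would only reproduce a version of the bound with $\lambda$ in place of $\mu$ in the linear coefficient, so it is essential to spend the upper estimate $T\le(n-1)\lambda$ on the $m(n+T)$ term and the lower estimate $T\ge(n-1)\mu$ on the $-(n+T)$ term in order to land precisely on $\Delta$; once that split is made, matching the discriminant to the displayed expression for $\Delta$ is a routine expansion in powers of $m$.
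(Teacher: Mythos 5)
Your proposal is correct and is essentially the paper's own argument in cleaner packaging: your Cauchy--Schwarz step $\bigl(\sum_{b'\ne b} r_{b'}\bigr)^2 \le (m-1)\sum_{b'\ne b} r_{b'}^2$ is exactly equivalent to the paper's opening inequality $\sum_{b_1\ne b_2\in B\setminus\{b\}}(r_{b_1}-r_{b_2})^2\ge 0$, and your ``asymmetric'' split of $(m-1)(n+T)$ — spending $T\le(n-1)\lambda$ on the $m$-weighted copy and $T\ge(n-1)\mu$ on the subtracted copy — is precisely how the paper applies Lemma~\ref{lm:property_A_of_ZD}, bounding the lone $\sum_{b_0}r_{b_0}^2$ below by $n+(n-1)\mu$ and the $m$-multiplied one above by $n+(n-1)\lambda$, so both arguments land on the identical quadratic $mr_b^2-2nr_b+n^2+n+(n-1)\mu-m\bigl(n+(n-1)\lambda\bigr)\le 0$. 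A minor bonus of your route is that it sidesteps the sign slip in the paper's displayed chain (the term written as $+mr_b^2$ should be $-mr_b^2$), which your derivation gets right automatically.
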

\begin{proof}
For any $b \in B$, we have
\begin{equation*}\begin{split}
0&\le\sum_{b_1,b_2\in B\setminus\{b\},b_1\ne b_2}{\left(r_{b_1}-r_{b_2}\right)^2} \\
&=\sum_{b_1,b_2\in B\setminus\{b\},b_1\ne b_2}{r_{b_1}^2+r_{b_2}^2-2r_{b_1}r_{b_2}} \\
&=2(m-2)\sum_{b_0\in B\setminus\{b\}}{r_{b_0}^2}-2\sum_{b_1,b_2\in B\setminus\{b\},b_1\ne b_2}{r_{b_1}r_{b_2}}.
\end{split}\end{equation*}
It then follows that
$$(m-2)\sum_{b_0\in B\setminus\{b\}}{r_{b_0}^2} \ge \sum_{b_1,b_2\in B\setminus\{b\},b_1\ne b_2}{r_{b_1}r_{b_2}}.$$
By Lemma~\ref{lm:property_A_of_ZD}, we have
\begin{equation*}\begin{split}
n + (n-1)\mu & \le \sum_{b_0\in B}{r_{b_0}^2}   \\
&=\left(\sum_{b_0\in B\setminus\{b\}}{r_{b_0}^2}\right)+r_b^2 \\
&=\left(\sum_{b_0\in B\setminus\{b\}}{r_{b_0}^2}\right)+\left(n-\sum_{b_0\in B\setminus\{b\}}{r_{b_0}}\right)^2 \\
&=2\left(\sum_{b_0\in B\setminus\{b\}}{r_{b_0}^2}\right)+n^2-2n(n-r_b)+\sum_{b_1,b_2\in B\setminus\{b\},b_1\ne b_2}{r_{b_1}r_{b_2}} \\
&\le m\left(\sum_{b_0\in B}{r_{b_0}^2}\right)+mr_b^2+n^2-2n(n-r_b) \\
&\le m(n+(n-1)\lambda)+mr_b^2+n^2-2n(n-r_b).
\end{split}\end{equation*}
Finally, solving the above inequality of $r_b$ completes the proof.
\end{proof}

In a similar way, it also has the following lemmas.
\begin{lemma}\label{lm:property_AA_of_ZD}
Notations are as above. Then
$$\sum_{b\in B}{r_b^2}=(n-1)\overline{\lambda}+n.$$
\end{lemma}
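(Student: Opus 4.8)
The plan is to derive this identity directly from Lemma~\ref{lm:property_A_of_ZD}, since it is essentially a cosmetic rearrangement of the counting identity established there. First I would recall that
$$\sum_{b\in B}{r_b(r_b-1)}=\sum_{\alpha\in A^*}{\lambda_\alpha},$$
and expand the left-hand side as $\sum_{b\in B}{r_b^2}-\sum_{b\in B}{r_b}$.

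The one structural fact I need is that the preimage sets $f^{-1}(b)$ for $b\in B$ partition the domain $A$: every $x\in A$ lies in exactly one preimage, namely $f^{-1}(f(x))$. Hence $\sum_{b\in B}{r_b}=|A|=n$. Substituting this gives $\sum_{b\in B}{r_b^2}-n=\sum_{\alpha\in A^*}{\lambda_\alpha}$.

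Next I would invoke the definition of the average in equation~\eqref{eq:define_of_avg_lambda}, which rearranges to $\sum_{\alpha\in A^*}{\lambda_\alpha}=(n-1)\overline{\lambda}$. Combining the two displays yields $\sum_{b\in B}{r_b^2}-n=(n-1)\overline{\lambda}$, and adding $n$ to both sides produces the claimed identity. Note that exactly this chain of equalities already appears verbatim in the opening line of the proof of Lemma~\ref{lm:property_B_of_ZD}, so the statement is really just isolating that intermediate computation as a standalone formula for later reference.

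There is no genuine obstacle here; the result is immediate once Lemma~\ref{lm:property_A_of_ZD} is in hand, and the only point requiring any care is the (elementary) observation that the preimages partition $A$ so that their sizes sum to $n$. The work is purely bookkeeping, with no inequalities or case analysis involved.
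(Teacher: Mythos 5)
Your proposal is correct and takes essentially the same route as the paper: the paper gives no separate proof of this lemma (it is introduced with ``In a similar way''), and the chain $\sum_{b\in B}r_b^2 - n = \sum_{\alpha\in A^*}\lambda_\alpha = (n-1)\overline{\lambda}$ that you derive from Lemma~\ref{lm:property_A_of_ZD} and Equation~\eqref{eq:define_of_avg_lambda} is exactly the opening computation in the paper's proof of Lemma~\ref{lm:property_B_of_ZD}. Your added observation that the preimages partition $A$, so that $\sum_{b\in B}r_b=n$, is the same fact the paper uses implicitly, so there is nothing further to fix.
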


\begin{lemma}\label{lm:property_BB_of_ZD}
Notations are as above. Then
$$\overline{\lambda}\ge \frac{(n-\epsilon)(n+\epsilon-m)}{m(n-1)},$$
where $n=km+\epsilon$ with $0\le \epsilon<m$. In particular,
$$\overline{\lambda}=\frac{(n-\epsilon)(n+\epsilon-m)}{m(n-1)},$$
if and only if, for $b\in B$, $r_b=k$ for $m-\epsilon$ times and $r_b=k+1$ for the other $\epsilon$ times.
\end{lemma}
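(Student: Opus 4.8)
The plan is to derive everything from Lemma~\ref{lm:property_AA_of_ZD} and then reduce the inequality to the same integer optimization problem already solved inside the proof of Lemma~\ref{lm:property_B_of_ZD}. First I would rewrite the quantity $\overline{\lambda}$ in terms of the second moment of the preimage sizes. Lemma~\ref{lm:property_AA_of_ZD} states that $\sum_{b\in B}r_b^2=(n-1)\overline{\lambda}+n$, so solving for $\overline{\lambda}$ gives
\begin{equation*}
\overline{\lambda}=\frac{1}{n-1}\left(\sum_{b\in B}r_b^2-n\right).
\end{equation*}
Since $n-1>0$, proving the stated lower bound on $\overline{\lambda}$ is exactly equivalent to proving the lower bound $\sum_{b\in B}r_b^2\ge (m-\epsilon)k^2+\epsilon(k+1)^2$, and the equality case for $\overline{\lambda}$ corresponds verbatim to the equality case for the second moment.

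Next I would minimize $\sum_{b\in B}r_b^2$ over all admissible distributions of preimage sizes. The only constraint is $\sum_{b\in B}r_b=n$, together with $r_b\ge 1$ for every $b$ because $f$ maps $A$ \emph{onto} $B$ (in fact this positivity is not even needed for the bound). The sum of squares of a fixed number of terms with a fixed total is a convex function, so it is minimized when the $r_b$ are as equal as possible. Writing $n=km+\epsilon$ with $0\le\epsilon<m$, the balanced distribution assigns the value $k+1$ to exactly $\epsilon$ of the $b\in B$ and the value $k$ to the remaining $m-\epsilon$, yielding
\begin{equation*}
\min_{r_b,\,b\in B}\sum_{b\in B}r_b^2=(m-\epsilon)k^2+\epsilon(k+1)^2=\frac{(n-\epsilon)(n+\epsilon-m)}{m},
\end{equation*}
which is precisely the computation carried out in the proof of Lemma~\ref{lm:property_B_of_ZD}. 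Substituting this minimum into the expression for $\overline{\lambda}$ and simplifying gives the claimed bound $\overline{\lambda}\ge\frac{(n-\epsilon)(n+\epsilon-m)}{m(n-1)}$.

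For the equality characterization I would argue by strict convexity, which is the one step requiring genuine care. Suppose two of the preimage sizes, say $r_{b_1}$ and $r_{b_2}$, satisfy $r_{b_1}\ge r_{b_2}+2$. Replacing the pair $(r_{b_1},r_{b_2})$ by $(r_{b_1}-1,r_{b_2}+1)$ preserves the total $\sum_{b\in B}r_b=n$ but strictly decreases $\sum_{b\in B}r_b^2$, since $(r_{b_1}-1)^2+(r_{b_2}+1)^2-r_{b_1}^2-r_{b_2}^2=2(r_{b_2}-r_{b_1}+1)<0$. Hence the minimum of the second moment is attained if and only if all the $r_b$ differ pairwise by at most $1$; combined with $\sum_{b\in B}r_b=n=km+\epsilon$, this forces exactly $\epsilon$ of the values to equal $k+1$ and the remaining $m-\epsilon$ to equal $k$. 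Tracing this back through the equivalence established in the first paragraph gives that $\overline{\lambda}$ meets its lower bound precisely under this balanced preimage distribution, completing the proof. The main subtlety, and the only place where the argument is not a mechanical substitution, is this strict-convexity uniqueness step ensuring the minimizer is the balanced distribution up to relabeling.
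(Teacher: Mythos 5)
Your proof is correct and follows essentially the same route as the paper: Lemma~\ref{lm:property_BB_of_ZD} is stated there as following ``in a similar way'' from the proof of Lemma~\ref{lm:property_B_of_ZD}, which likewise rewrites $(n-1)\overline{\lambda}=\sum_{b\in B}r_b^2-n$ via Lemma~\ref{lm:property_A_of_ZD} and minimizes $\sum_{b\in B}r_b^2$ subject to $\sum_{b\in B}r_b=n$, obtaining $(m-\epsilon)k^2+\epsilon(k+1)^2-n=\frac{(n-\epsilon)(n+\epsilon-m)}{m}$ at the as-balanced-as-possible distribution. The only difference is that you make explicit, via the exchange (smoothing) argument, the ``integral programming'' claim that the minimum is attained if and only if the $r_b$ differ pairwise by at most $1$, a step the paper asserts without proof.
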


\begin{lemma}\label{lm:property_CC_of_ZD}
Notations are as above. Then for each $b\in B$,
$$\frac{n-\sqrt{\Delta}}{m}\le r_b \le \frac{n+\sqrt{\Delta}}{m},$$
where $\Delta=(n+\overline{\lambda} n-\overline{\lambda})m^2-(n^2+n+\overline{\lambda} n-\overline{\lambda})m+n^2$.
\end{lemma}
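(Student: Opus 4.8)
The plan is to follow the template of the proof of Lemma~\ref{lm:property_C_of_ZD} almost verbatim, the only change being that Lemma~\ref{lm:property_AA_of_ZD} now supplies the \emph{exact} value of $\sum_{b\in B}r_b^2$ rather than the two one-sided estimates $n+(n-1)\mu \le \sum_{b\in B}r_b^2 \le n+(n-1)\lambda$ that forced the appearance of both $\mu$ and $\lambda$ there. This is precisely why both parameters collapse to the single $\overline{\lambda}$ in the statement of $\Delta$.

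First I would fix $b\in B$ and start from the trivial inequality
$$0\le \sum_{b_1,b_2\in B\setminus\{b\},\,b_1\ne b_2}(r_{b_1}-r_{b_2})^2.$$
Expanding the square and counting that each $r_{b_0}^2$ occurs $2(m-2)$ times while each cross term $r_{b_1}r_{b_2}$ occurs twice yields
$$(m-2)\sum_{b_0\in B\setminus\{b\}}r_{b_0}^2 \ge \sum_{b_1,b_2\in B\setminus\{b\},\,b_1\ne b_2}r_{b_1}r_{b_2},$$
exactly as in Lemma~\ref{lm:property_C_of_ZD}. Next I would isolate the index $b$ in $\sum_{b\in B}r_b^2$, write $r_b=n-\sum_{b_0\ne b}r_{b_0}$, and expand the square so that the off-diagonal sum $\sum_{b_1\ne b_2}r_{b_1}r_{b_2}$ surfaces; substituting the displayed inequality then bounds $\sum_{b\in B}r_b^2$ above by an expression linear in $\sum_{b\in B}r_b^2$ and quadratic in $r_b$.

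The decisive step is to replace every occurrence of $\sum_{b\in B}r_b^2$ by its exact value $(n-1)\overline{\lambda}+n$ from Lemma~\ref{lm:property_AA_of_ZD}. Because this is an equality, the same substitution is legitimate wherever the sum appears, so there is no need to separate a lower from an upper estimate; this is exactly the place where Lemma~\ref{lm:property_C_of_ZD} had to insert $\mu$ on one side and $\lambda$ on the other. Collecting terms then leaves a quadratic inequality in $r_b$ of the shape
$$m\,r_b^2 - 2n\,r_b + c \le 0, \qquad c = n^2 + n + (n-1)\overline{\lambda} - m\bigl(n+(n-1)\overline{\lambda}\bigr),$$
whose positive leading coefficient $m>0$ forces $r_b$ to lie between the two real roots $\bigl(n\pm\sqrt{n^2-mc}\bigr)/m$. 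A direct expansion of $n^2-mc$ should reproduce the claimed $\Delta=(n+\overline{\lambda}n-\overline{\lambda})m^2-(n^2+n+\overline{\lambda}n-\overline{\lambda})m+n^2$.

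I expect no conceptual obstacle here; the argument is entirely mechanical. The only point demanding care is the sign bookkeeping in the quadratic: when passing from $\sum_{b_0\ne b}r_{b_0}^2=\sum_{b\in B}r_b^2-r_b^2$ back to the full sum one must keep the coefficient of $r_b^2$ correct, and then verify that the discriminant contracts to exactly the stated $\Delta$. It is also worth remarking that $\Delta\ge 0$ automatically, since $r_b$ is itself a feasible value satisfying the quadratic inequality, so the two bounds are always real.
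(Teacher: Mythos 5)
Your proof is correct and is essentially the paper's own argument: the paper justifies Lemma~\ref{lm:property_CC_of_ZD} only by the phrase ``in a similar way,'' meaning exactly what you do --- rerun the proof of Lemma~\ref{lm:property_C_of_ZD} with the two one-sided estimates $n+(n-1)\mu \le \sum_{b\in B}r_b^2 \le n+(n-1)\lambda$ replaced by the exact value $\sum_{b\in B}r_b^2=(n-1)\overline{\lambda}+n$ from Lemma~\ref{lm:property_AA_of_ZD}, which collapses $\mu$ and $\lambda$ to $\overline{\lambda}$ in $\Delta$. Your sign bookkeeping is in fact the correct one: the quadratic must read $m r_b^2 - 2n r_b + c \le 0$ with $c = n^2+n+(n-1)\overline{\lambda}-m\bigl(n+(n-1)\overline{\lambda}\bigr)$ (the paper's printed proof of Lemma~\ref{lm:property_C_of_ZD} contains a typo, writing $+mr_b^2$ where $-mr_b^2$ is needed), and with this sign the discriminant $n^2-mc$ expands to exactly the stated $\Delta$.
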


\begin{remark}
As a comparison, the properties of ZD functions in Lemmas~\ref{lm:property_A_of_ZD},~\ref{lm:property_B_of_ZD} and~\ref{lm:property_C_of_ZD}, are generalizations of those properties of ZDB functions in~\cite{wang2014sets} and  generalization of those properties of N-ZDB functions in~\cite{xu2018optimal}. 
\end{remark}

\subsection{Zero-Difference Functions from Zero-Difference Balanced Functions}
Firstly, a generic class of ZDB function proposed by~\citeauthor{yi2018zero} is recalled.
\begin{prop}\cite[Theorem 1]{yi2018generic}\label{prop:construct_zdb_on_generic_ring}
Let $(R, +, \times)$ be a ring of order $n$, and let $G$ be a subgroup of $(R, \times)$. Denote $D_G=\{rG\mid r \in R\}$, where $rG=\{rg\mid g\in G\}$. Define a function from $R$ to $\Z_{|D_G|}$, $f_G(x)=h_G(g_G(x))$ where $g_G(x)=rG$ if $x\in rG$ and $h_G$ is a bijection from $D_G$ to $\Z_{|D_G|}$. If $G$ satisfies the condition
\begin{equation}\label{eq:condition_of_zdb_on_generic_ring}
(G-1)\setminus\{0\} \subset R^\times,
\end{equation}
then $f_G(x)$ is an $(n, \frac{n-1}{k}+1, k-1)$ ZDB function from $(R,+)$ to $(\Z_{m},+)$, where $m=\frac{n-1}{k}+1$ and $k=|G|$.
\end{prop}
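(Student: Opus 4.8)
The plan is to recognize $f_G$ as (a relabelling of) the orbit map of the multiplicative action of $G$ on $R$, and then to extract the constant $\lambda_\alpha = k-1$ by solving a single linear equation in the ring. First I would note that, since $G$ is a subgroup of $(R,\times)$ containing the identity $1$, its elements are units and the map $x\mapsto xG$ is exactly the orbit map of $G$ acting on $R$ by multiplication. Because orbits of a group action partition the underlying set, the family $D_G=\{rG\mid r\in R\}$ is a genuine partition of $R$; hence $g_G$ is well defined, and since $h_G$ is a bijection, $f_G(y)=f_G(z)$ holds if and only if $y$ and $z$ lie in a common orbit, i.e. $z\in yG$.

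The second step is to determine the orbit sizes, and this is where the hypothesis $(G-1)\setminus\{0\}\subset R^\times$ first enters. The orbit of $0$ is $\{0\}$. For a nonzero $x$, its stabilizer is $\{g\in G\mid x(g-1)=0\}$; if $g\neq 1$ then $g-1\in R^\times$, so $x(g-1)=0$ forces $x=0$, a contradiction. Thus every nonzero element has trivial stabilizer and its orbit has size $k=|G|$. Consequently $R^*$ splits into $(n-1)/k$ orbits of size $k$, which in particular shows $(n-1)/k$ is an integer, and together with $\{0\}$ we obtain $|D_G|=\frac{n-1}{k}+1=m=|f_G(R)|$, matching the claimed image size.

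The third step computes $\lambda_\alpha$ for a fixed $\alpha\in R^*$. By the first step, $f_G(x+\alpha)=f_G(x)$ is equivalent to $x+\alpha\in xG$, i.e. to the existence of $g\in G$ with $x(g-1)=\alpha$. The value $g=1$ is excluded since $\alpha\neq 0$, and for each $g\in G\setminus\{1\}$ the invertibility of $g-1$ yields the unique solution $x=\alpha(g-1)^{-1}$. It remains to show distinct choices of $g$ give distinct $x$: if $g,g'\in G\setminus\{1\}$ both produce the same $x$, then subtracting $x(g-1)=\alpha$ and $x(g'-1)=\alpha$ gives $x(g-g')=0$; writing $g-g'=-g\big((g^{-1}g')-1\big)$ with $g^{-1}g'\in G$, the hypothesis makes $g-g'$ a unit whenever $g\neq g'$, forcing $x=0$, which contradicts $\alpha\neq 0$. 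Hence $g\mapsto \alpha(g-1)^{-1}$ is a bijection from $G\setminus\{1\}$ onto the solution set, so $\lambda_\alpha=k-1$ for every nonzero $\alpha$, which is precisely the asserted $(n,m,k-1)$ ZDB property.

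I expect the genuine content to sit in the two invertibility arguments of steps two and three, namely converting the lone hypothesis $(G-1)\setminus\{0\}\subset R^\times$ into invertibility of $g-1$ (for the stabilizer computation) and of $g-g'$ (for injectivity); the identity $g-g'=-g\big((g^{-1}g')-1\big)$ is the key algebraic move that reduces the second to the first. Everything else — the partition into orbits, the image-size count, and the passage to the linear equation $x(g-1)=\alpha$ — is routine bookkeeping about group actions and should present no difficulty.
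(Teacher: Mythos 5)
Your proof is correct. Note that the paper itself gives no proof of this proposition---it is quoted as \cite[Theorem 1]{yi2018generic}---but your argument (orbits of the multiplicative $G$-action partition $R$, the hypothesis $(G-1)\setminus\{0\}\subset R^\times$ forces trivial stabilizers so all nonzero orbits have size $k$, and for each $\alpha\ne 0$ the map $g\mapsto \alpha(g-1)^{-1}$ is a bijection from $G\setminus\{1\}$ onto the solution set of $f_G(x+\alpha)=f_G(x)$, with injectivity via the identity $g-g'=-g\bigl((g^{-1}g')-1\bigr)$) is exactly the standard argument behind this construction, and it is complete: it establishes both the image size $m=\frac{n-1}{k}+1$ and the constant value $\lambda_\alpha=k-1$.
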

\begin{remark}
Let $R$ be a residual class ring $\Z_n$ or a product of finite fields $\F_q$, then these ZDB functions are studied in~\cite{cai2013new, zha2015cyclotomic, ding2014three} and also in \cite{buratti2019disjoint, xiang2020new} in terms of PDF. In fact, all the ZDB functions (PDFs) can be obtained using nearring theory~\cite{clay1992nearrings} which was published in~\citeyear{clay1992nearrings}.
\end{remark}
\begin{remark}
Some readers, specially those from PDF community, may be confused that why we choose this ZDB function. There are two reasons. On the one hand, the preimages of this function can be easily changed to be almost balanced and balanced function usually has good properties. On the other hand, it is obvious that a minor change to a function can only lead to a minor change to the zero-difference property. As a result, some good applications from a ZDB function are likely to be reserved when a minor change was made. We will discuss the ``likely" in detail in Section~\ref{se:app}.
\end{remark}
So far as we know, there only two subclasses of such ZDB functions, namely Proposition~\ref{prop:zdb_1_on_zn} and Proposition~\ref{prop:zdb_1_on_fq}. The main difference between them is that ZDB functions in Proposition~\ref{prop:zdb_1_on_zn} are cyclic and thus have more applications and ZDB functions in Proposition~\ref{prop:zdb_1_on_fq} have more feasible parameters.
\begin{prop}\cite[Corollary 3.5]{buratti2019disjoint}\cite[Theorem 1]{zha2015cyclotomic}\label{prop:zdb_1_on_zn}
Let $n=p_1^{r_1}p_2^{r_2}\cdots p_k^{r_k}$, where $p_1<p_2<\cdots < p_k$ are odd prime numbers, and $r_1, r_2, \ldots, r_k$ are positive integers. Then for any positive integers $e$ such that $e\mid \gcd(p_1-1, p_2-1, \ldots, p_k-1)$, there exists an $(n,\frac{n-1}{e}+1,e-1)$ ZDB function from $(\Z_{n},+)$ to $(\Z_{\frac{n-1}{e}+1},+)$.
\end{prop}

\begin{prop}\cite[Corollary 3.3]{buratti2019disjoint}\cite[Theorem 1]{ding2014three}\label{prop:zdb_1_on_fq}
Let $n=p_1^{r_1}p_2^{r_2}\cdots p_k^{r_k}$, where $p_1<p_2<\cdots < p_k$ are prime numbers, and $r_1, r_2, \ldots, r_k$ are positive integers. Then for any positive integers $e$ such that $e\mid \gcd(p_1^{r_1}-1, p_2^{r_2}-1, \ldots, p_k^{r_k}-1)$, there exists an $(n,\frac{n-1}{e}+1,e-1)$ ZDB function from $(\prod_{i=1}^{k}\F_{p_i^{r_i}},+)$ to $(\Z_{\frac{n-1}{e}+1},+)$.
\end{prop}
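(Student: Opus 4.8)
The plan is to realize this statement as a direct application of the generic construction in Proposition~\ref{prop:construct_zdb_on_generic_ring}. First I would set $R=\prod_{i=1}^{k}\F_{p_i^{r_i}}$, a commutative ring with identity whose order is $n=\prod_{i=1}^{k}p_i^{r_i}$ and whose unit group is $R^\times=\prod_{i=1}^{k}\F_{p_i^{r_i}}^\times$, i.e.\ the tuples with every coordinate nonzero. The whole task then reduces to exhibiting a multiplicative subgroup $G\le(R,\times)$ of order exactly $e$ that satisfies the unit condition $(G-1)\setminus\{0\}\subset R^\times$; once this is done, Proposition~\ref{prop:construct_zdb_on_generic_ring} applied with $|G|=e$ delivers an $(n,\frac{n-1}{e}+1,e-1)$ ZDB function onto $\Z_{\frac{n-1}{e}+1}$, which is precisely the claim.

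Next I would build $G$ coordinatewise. Since each multiplicative group $\F_{p_i^{r_i}}^\times$ is cyclic of order $p_i^{r_i}-1$ and the hypothesis gives $e\mid p_i^{r_i}-1$, I can pick in each factor an element $\theta_i$ of order \emph{exactly} $e$. Setting $\theta=(\theta_1,\dots,\theta_k)$ and $G=\langle\theta\rangle$, the order of $\theta$ is the least common multiple of the orders of its coordinates, namely $\lcm(e,\dots,e)=e$, so $|G|=e$ as required.

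It remains to verify the unit condition, which I expect to be the crux. A nonzero element of $G-1$ has the form $\theta^{j}-1=(\theta_1^{j}-1,\dots,\theta_k^{j}-1)$ for some $1\le j\le e-1$. Because each $\theta_i$ has order exactly $e$, we have $\theta_i^{j}\neq1$ and hence $\theta_i^{j}-1\neq0$ for every $i$ in this range of $j$; thus every coordinate of $\theta^{j}-1$ is nonzero, which means $\theta^{j}-1\in R^\times$. This is exactly where the choice of generators of full order $e$ (rather than some proper divisor of $e$) is essential: had some $\theta_i$ had order $d<e$, then $\theta^{d}-1$ would carry a zero coordinate and fail to be a unit. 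With $(G-1)\setminus\{0\}\subset R^\times$ established, invoking Proposition~\ref{prop:construct_zdb_on_generic_ring} completes the argument. As a consistency check one notes that $p_i^{r_i}\equiv1\pmod e$ forces $n=\prod_{i=1}^{k} p_i^{r_i}\equiv1\pmod e$, so $\frac{n-1}{e}$ is an integer and the target group $\Z_{\frac{n-1}{e}+1}$ is well defined.
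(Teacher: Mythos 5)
Your proof is correct: choosing each $\theta_i$ of exact order $e$ forces every coordinate of $\theta^{j}-1$ ($1\le j\le e-1$) to be nonzero, so $(G-1)\setminus\{0\}\subset R^\times$ holds and Proposition~\ref{prop:construct_zdb_on_generic_ring} applies with $|G|=e$ to give the stated $(n,\frac{n-1}{e}+1,e-1)$ ZDB function. This is essentially the same route the paper takes: the result is cited from the literature, and the remark following Proposition~\ref{prop:construct_zdb_on_generic_ring} identifies it exactly as the subclass of the generic ring construction with $R$ a product of finite fields, which is what your argument fills in.
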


If the ZDB function $f_G$ is modified a little bit, then a ZD function $f_G^0$ can be obtained. Comparing with the original ZDB functions, these ZD functions are better in balance, namely they are almost balanced.
\begin{thms}\label{th:construct_nzdb_on_generic_ring}
Using the notations in Proposition~\ref{prop:construct_zdb_on_generic_ring}, define a function
$$f_G^0(x)=\begin{cases}
f_G(x), & x \ne 0, \\
f_G(1), & x = 0.
\end{cases}$$
Then $f_G^0$ is an $(n, \frac{n-1}{k}, S)$ ZD function, where
$$S=\begin{cases}
\{n\}, & \mbox{if $\frac{n-1}{k}=1$}, \\
\{k\}, & \mbox{if $\frac{n-1}{k}=2$ and $-1 \notin G$ }, \\
\{k-1,k\}, & \mbox{if $\frac{n-1}{k}>2$ and $-1 \notin G$ }, \\
\{k-1,k+1\}, & \mbox{if $\frac{n-1}{k}\ge 2$ and $-1 \in G$ }.
\end{cases}$$
\end{thms}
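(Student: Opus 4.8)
The plan is to exploit the ``change point'' philosophy directly: since $f_G^0$ is obtained from the ZDB function $f_G$ by altering a single input, I would first record how the preimage structure changes and then track how each difference count $\lambda_\alpha$ is perturbed. By the definition of $f_G$ via the cosets $rG$, the preimages of $f_G$ are exactly the distinct cosets; by the parameters in Proposition~\ref{prop:construct_zdb_on_generic_ring} these are the singleton $\{0\}=0\cdot G$ together with $\frac{n-1}{k}$ cosets of size $k$ partitioning $R^*$, one of which is $G=1\cdot G$. Since $1\in G$, redefining $f_G^0(0)=f_G(1)$ simply moves $0$ into the class of $G$, so the class $G\cup\{0\}$ now has size $k+1$, the label of the old singleton $\{0\}$ is no longer attained, and every other class is unchanged. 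Hence $|f_G^0(R)|=\frac{n-1}{k}$, which gives the middle parameter, and $f_G^0$ agrees with $f_G$ at every input except $x=0$.

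With this, I would fix a nonzero shift $\alpha$ and compare $T_\alpha^0=\{x\mid f_G^0(x+\alpha)=f_G^0(x)\}$ with $T_\alpha=\{x\mid f_G(x+\alpha)=f_G(x)\}$, where $|T_\alpha|=k-1$ is the ZDB constant. Because $f_G^0$ and $f_G$ differ only at the input $0$, membership in $T_\alpha^0$ and $T_\alpha$ can differ only at $x=0$ and at $x=-\alpha$ (the two indices for which $0$ occurs as $x$ or as $x+\alpha$), and these are distinct as $\alpha\ne 0$. A direct check shows neither point lies in $T_\alpha$: for $x=0$ the original condition reads $f_G(\alpha)=f_G(0)$, impossible since $f_G(0)$ is attained only at $0$, and likewise for $x=-\alpha$. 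For $f_G^0$, the point $x=0$ contributes precisely when $f_G(\alpha)=f_G(1)$, i.e.\ when $\alpha\in G$, and the point $x=-\alpha$ contributes precisely when $f_G(-\alpha)=f_G(1)$, i.e.\ when $\alpha\in -G$. Writing $\mathbf 1[\,\cdot\,]$ for the indicator, this yields the key identity $\lambda_\alpha=(k-1)+\mathbf 1[\alpha\in G]+\mathbf 1[\alpha\in -G]$.

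It then remains to read off $S=\{\lambda_\alpha\mid\alpha\in R^*\}$ by counting. Since $-1\in R^\times$, the set $-G=(-1)G$ is again a coset of $G$; it equals $G$ exactly when $-1\in G$ and is disjoint from $G$ otherwise, with $|G|=|{-G}|=k$ and $G,-G\subseteq R^*$. If $-1\in G$ then $\lambda_\alpha=k+1$ for the $k$ shifts $\alpha\in G$ and $\lambda_\alpha=k-1$ for the remaining $n-1-k$, so I would split on whether $n-1-k>0$, i.e.\ $\frac{n-1}{k}\ge 2$ (giving $\{k-1,k+1\}$) versus $\frac{n-1}{k}=1$ (where $G=R^*$, every count is $k+1=n$, giving $\{n\}$). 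If $-1\notin G$ then $\lambda_\alpha=k$ on the $2k$ shifts in the disjoint union $G\cup(-G)$ and $\lambda_\alpha=k-1$ on the remaining $n-1-2k$, so $k-1$ appears iff $n-1-2k>0$, i.e.\ $\frac{n-1}{k}>2$ (giving $\{k-1,k\}$) versus $\frac{n-1}{k}=2$ (where $G\cup(-G)=R^*$, giving $\{k\}$). I expect the only delicate part to be the bookkeeping of these degenerate regimes---verifying that a purported value is actually attained, or that $G$, resp.\ $G\cup(-G)$, exhausts $R^*$, for the small quotients $\frac{n-1}{k}\in\{1,2\}$---together with the elementary coset facts that $-G=G\iff -1\in G$ and otherwise $G\cap(-G)=\emptyset$.
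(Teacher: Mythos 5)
Your proposal is correct and follows essentially the same route as the paper's own proof: both localize the effect of the change point to the two candidate solutions $x=0$ (new solution iff $\alpha\in G$) and $x=-\alpha$ (new solution iff $\alpha\in -G$), observe that neither was a solution for $f_G$, and then split into cases according to whether $-G=G$ (i.e.\ $-1\in G$) and the size of $\frac{n-1}{k}$. Your indicator identity $\lambda_\alpha=(k-1)+\mathbf{1}[\alpha\in G]+\mathbf{1}[\alpha\in -G]$ is just a compact repackaging of the paper's explicit set decompositions, and your handling of the degenerate regimes $\frac{n-1}{k}\in\{1,2\}$ matches the paper's conclusion.
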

\begin{proof}
Obviously $|f_G^0(R)|=|f_G(R)|-1=\frac{n-1}{k}$, since there does not exist $x \in R$ such that $f_G^0(x)=f_G(0)$. If $\frac{n-1}{k} =1$, then $f_G^0$ must be a constant function. In the following, assume that $\frac{n-1}{k}\ge2$. To solve the equation $f_G^0(x+a)=f_G^0(x)$, consider two special cases.
\begin{itemize}
\item Case $a \in G$: We assert that $x=0$ is a solution of the equation $f_G^0(x+a)=f_G^0(x)$. Since $$f_G^0(0+a)=f_G^0(a)=f_G(a)=h_G(G),$$
    and
    $$f_G^0(0)=f_G(1)=h_G(G),$$
    we have $f_G^0(0+a)=f_G^0(0)$, where $h_G$ is the bijection in Proposition~\ref{prop:construct_zdb_on_generic_ring}.
\item Case $a \in -G$: We assert that $x=-a$ is a solution of the equation $f_G^0(x+a)=f_G^0(x)$. Since $$f_G^0(-a+a)=f_G^0(0)=f_G(1)=h_G(G),$$
    and
    $$f_G^0(-a)=f_G(-a)=h_G(G),$$
    we have $f_G^0(-a+a)=f_G^0(-a)$.
\end{itemize}
Note that neither $x=0$ nor $x=-a$ can be a solution of the equation $f_G(x +a)=f_G(x)$ and that $-G=G$ if and only if $-1\in G$. Now for any nonzero element $a \in R^*$, we have the following different cases.
\begin{itemize}
\item If $\frac{n-1}{k}\ge 2$ and $-1 \in G$, then we have
\begin{equation*}\begin{split}
&\{ x \in R \mid f_G^0(x +a)=f_G^0(x) \} \\
=&\begin{cases}
\{ x \in R^* \mid f_G(x +a)=f_G(x) \} \cup \{ 0,-a\}, & a\in G,\\
\{ x \in R^* \mid f_G(x +a)=f_G(x) \} , & otherwise.\\
\end{cases}
\end{split}\end{equation*}
Therefore, we have
\begin{equation*}\begin{split}
&|\{ x \in R \mid f_G^0(x +a)=f_G^0(x) \}|=\begin{cases}
k+1, & a\in G,\\
k-1 , & otherwise.\\
\end{cases}
\end{split}\end{equation*}

\item If $\frac{n-1}{k}>2$ and $-1 \notin G$, then we have
\begin{equation*}\begin{split}
&\{ x \in R \mid f_G^0(x +a)=f_G^0(x) \} \\
=&\begin{cases}
\{ x \in R^* \mid f_G(x +a)=f_G(x) \} \cup \{ 0\}, & a\in G,\\
\{ x \in R^* \mid f_G(x +a)=f_G(x) \} \cup \{ -a\}, & a\in -G,\\
\{ x \in R^* \mid f_G(x +a)=f_G(x) \} , & otherwise,\\
\end{cases}
\end{split}\end{equation*}
and
\begin{equation*}\begin{split}
&|\{ x \in R \mid f_G^0(x +a)=f_G^0(x) \}|=\begin{cases}
k, & a\in -G\cup G,\\
k-1 , & otherwise.\\
\end{cases}
\end{split}\end{equation*}
\item If $\frac{n-1}{k}=2$ and $-1 \notin G$, we have
\begin{equation*}\begin{split}
&\{ x \in R \mid f_G^0(x +a)=f_G^0(x) \} \\
=&\begin{cases}
\{ x \in R^* \mid f_G(x +a)=f_G(x) \} \cup \{ 0\}, & a\in G,\\
\{ x \in R^* \mid f_G(x +a)=f_G(x) \} \cup \{ -a\}, & a\in -G,\\
\end{cases}
\end{split}\end{equation*}
and
\begin{equation*}\begin{split}
|\{ x \in R \mid f_G^0(x +a)=f_G^0(x) \}|=k, a\in -G\cup G.
\end{split}\end{equation*}
\end{itemize}
It completes the proof.
\end{proof}
To investigate when $-1\in G$, we have the following lemma.
\begin{lemma}\label{lm:about_p2}
Let $(R,+,\times)$ be a ring and $G$ be a subgroup of $(R,\times)$. Denote $k=|G|$. If $G$ satisfies $(G-1)\setminus\{0\}\subset R^\times$, then $-1 \in G$ if and only if $2\mid k$ or the characteristic $p$ of $R$ is $2$.
\end{lemma}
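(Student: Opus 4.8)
The plan is to prove the two implications separately, using the hypothesis $(G-1)\setminus\{0\}\subset R^\times$ only in the harder direction to rule out a zero-divisor factorization. Throughout I take the identity of $G$ to be the ring identity $1$, as in the setting of Proposition~\ref{prop:construct_zdb_on_generic_ring}; this is automatic in the nonvacuous case, since if the idempotent identity $e$ of $G$ satisfied $e\neq 1$, then $e-1\neq 0$ would be invertible by hypothesis while $e(e-1)=e^2-e=0$, forcing $e=0$ and $G=\{0\}$. In particular, whenever $-1\in G$ we automatically have $1=(-1)^2\in G$.

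For the direction $-1\in G \Rightarrow (2\mid k \text{ or } p=2)$, the argument is pure group theory and needs no ring condition. If $p=2$ there is nothing to prove, so assume $p\neq 2$, whence $1+1\neq 0$ and therefore $-1\neq 1$ in $R$. Since $-1\in G$ and $(-1)^2=1$, the element $-1$ has order exactly $2$ in the finite group $G$, so by Lagrange's theorem $2\mid|G|=k$.

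The substantive direction is $(2\mid k \text{ or } p=2) \Rightarrow -1\in G$. If $p=2$ then $-1=1\in G$ and we are done, so suppose $p\neq 2$ and $2\mid k$. Because $G$ is a finite group of even order, Cauchy's theorem produces an element $g\in G$ of order $2$, i.e.\ $g\neq 1$ and $g^2=1$. The key algebraic step is to factor $0=g^2-1=(g-1)(g+1)$, which is valid in an arbitrary (not necessarily commutative) ring since $g$ commutes with $1$. As $g\neq 1$ we have $g-1\neq 0$, so the hypothesis $(G-1)\setminus\{0\}\subset R^\times$ makes $g-1$ invertible; multiplying $(g-1)(g+1)=0$ on the left by the two-sided inverse $(g-1)^{-1}$ gives $g+1=0$, that is $g=-1$. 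Hence $-1=g\in G$.

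I expect the only place that requires genuine thought to be this last direction: first invoking Cauchy's theorem to manufacture an order-$2$ element $g$ from the mere divisibility $2\mid k$, and then recognizing that the invertibility of $g-1$ is exactly the device needed to cancel the factor $g-1$ in $(g-1)(g+1)=0$ and force $g+1=0$. Two small points of care are that no commutativity of $R$ should be assumed, so the cancellation must be written as one-sided multiplication by $(g-1)^{-1}$, and that Cauchy's theorem applies to arbitrary finite groups, so its use is legitimate even if $(G,\times)$ is nonabelian.
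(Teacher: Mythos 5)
Your proof is correct and follows essentially the same route as the paper's: Cauchy's theorem produces an order-$2$ element $g\in G$, the hypothesis $(G-1)\setminus\{0\}\subset R^\times$ is used to cancel the invertible factor $g-1$ and force $g=-1$, and Lagrange's theorem gives the converse when $p\neq 2$. The only cosmetic difference is the identity used for the cancellation: the paper computes $(\alpha-1)^2=\alpha^2-2\alpha+1=2(1-\alpha)$ and cancels to get $\alpha-1=-2$, whereas you factor $g^2-1=(g-1)(g+1)=0$ and cancel on the left; the two manipulations are interchangeable.
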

\begin{proof}
If $2 \mid k$, then by Cauchy's Theorem \cite{bhattacharya1994basic} there exists an element $\alpha \in G$ with order $2$, i.e., $\alpha^2=1$. Then
$$(\alpha-1)^2=\alpha^2-2\alpha+1=2(1-\alpha).$$
Since $(G-1)\setminus\{0\}\subset R^\times$, we have $\alpha-1\in R^\times$. Hence
$$(\alpha-1)=-2.$$
So $-1=\alpha\in G$.

If $p=2$, then $-1=1\in G$.

Conversely, suppose that $-1 \in G$. If $p\ne 2$, then $-1\ne 1$. The multiplicative order of $-1$ must be $2$ since $(-1)^2=1$. By Lagrange's Theorem~\cite{bhattacharya1994basic} we have $2\mid k$.
\end{proof}

\begin{remark}
Note that the results in \cite[Section 4]{xu2018optimal} are also the main results in~\cite{cai2013new} and Theorem~\ref{th:construct_nzdb_on_generic_ring} is a generalization of them. Hence $f_G^0$ is a generalization of the exact N-ZDB function in \cite[Theorem 4.1]{xu2018optimal}. The technic used in Theorem~\ref{th:construct_nzdb_on_generic_ring} is called change point technic.
\end{remark}

Using the same technic, we can generalize Theorem~\ref{th:construct_nzdb_on_generic_ring} a bit.
\begin{thms}\label{th:construct_nzdb_on_generic_group}
Let $f$ be an $(n,\frac{n-1}{k},\lambda)$ ZDB function from $(A,+)$ to $(B,+)$ such that for any $b \in B$,
\begin{equation*}\begin{split}
&|\{ x \in A \mid f(x)=b \}|=\begin{cases}
k, & b \ne b_0,\\
1, & b = b_0 ,
\end{cases}
\end{split}\end{equation*}
where $k\ge 2$ and $b_0\in B$. Suppose $f(a_0)=b_0$. For any $a \in A$, define $I(a)=\{x \in A \mid f(x)=f(a)\}$ and define a function
$$g_a(x)=\begin{cases}
f(x), & x \ne a_0, \\
f(a), & x = a_0.
\end{cases}$$
If $a_0\notin I(a)$ then $g_a$ is an $(n, \frac{n-1}{k}, S)$ ZD function where
$$S=\begin{cases}
\{n\}, & \mbox{if $\frac{n-1}{k}=1$}, \\
\{k\}, & \mbox{if $\frac{n-1}{k}=2$ and $D=\emptyset$ }, \\
\{k-1,k\}, & \mbox{if $\frac{n-1}{k}>2$ and $D=\emptyset$ }, \\
\{k-1, k+1\}, & \mbox{if $\frac{n-1}{k}\ge 2$ and $D=I(a)-a_0$},\\
\{k-1, k, k+1\}, & \mbox{if $\frac{n-1}{k}\ge 2$ and $\emptyset \varsubsetneq D\varsubsetneq I(a)-a_0$},
\end{cases}$$
and $D=(I(a)-a_0) \cap (a_0-I(a))$. Moreover, $\lambda=k-1$.
\end{thms}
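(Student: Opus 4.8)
The plan is to regard $g_a$ as a one-point perturbation of the ZDB function $f$ and to track exactly how the number of zero-difference solutions changes. Two preliminary observations set the stage. Writing $m=|f(A)|$, the preimage data (one class of size $1$, the rest of size $k$) forces $1+(m-1)k=n$, so $m=\frac{n-1}{k}+1$; replacing the value $b_0=f(a_0)$ at the unique point $a_0$ of $f^{-1}(b_0)$ by $f(a)\ne b_0$ deletes $b_0$ from the image and merges $a_0$ into $I(a)$, whence $g_a$ maps onto $B\setminus\{b_0\}$ and $|g_a(A)|=m-1=\frac{n-1}{k}$, as claimed. Next, the asserted value $\lambda=k-1$ follows from a direct double count (Lemma~\ref{lm:property_A_of_ZD} specialised to the constant $\lambda$ of $f$): $(n-1)\lambda=\sum_{b\in B}r_b(r_b-1)=(m-1)\,k(k-1)=(n-1)(k-1)$.

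The core of the argument is a local comparison, for each fixed $\alpha\in A^*$, between the solution set $\{x\mid g_a(x+\alpha)=g_a(x)\}$ and $\{x\mid f(x+\alpha)=f(x)\}$, the latter of constant size $\lambda=k-1$. Whenever $x\ne a_0$ and $x+\alpha\ne a_0$ we have $g_a(x)=f(x)$ and $g_a(x+\alpha)=f(x+\alpha)$, so membership is unaffected; the two sets can therefore differ only at $x=a_0$ and at $x=a_0-\alpha$, which are distinct since $\alpha\ne 0$. At $x=a_0$ the equation reads $f(a_0+\alpha)=f(a)$, i.e. $\alpha\in I(a)-a_0$, while for $f$ it would demand $a_0+\alpha\in f^{-1}(b_0)=\{a_0\}$, impossible for $\alpha\ne0$; symmetrically, at $x=a_0-\alpha$ it reads $\alpha\in a_0-I(a)$ and is never a solution for $f$. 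Hence
\begin{equation*}
\lambda_\alpha=(k-1)+\big[\,\alpha\in I(a)-a_0\,\big]+\big[\,\alpha\in a_0-I(a)\,\big],
\end{equation*}
each bracket contributing $1$ when its condition holds and $0$ otherwise. The hypothesis $a_0\notin I(a)$ is used twice here: it guarantees $0\notin(I(a)-a_0)\cup(a_0-I(a))$, so discarding $\alpha=0$ loses nothing, and it forces $I(a)$ to be a full class, so $|I(a)-a_0|=|a_0-I(a)|=k$.

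It then remains to read off the set of attained values. Put $P=I(a)-a_0$, $Q=a_0-I(a)$, so $D=P\cap Q$ and $|P|=|Q|=k$, giving $|P\cup Q|=2k-|D|$; the formula yields $\lambda_\alpha=k+1$ on $D$, $\lambda_\alpha=k$ on $(P\cup Q)\setminus D$, and $\lambda_\alpha=k-1$ on $A^*\setminus(P\cup Q)$. The main (and only non-routine) obstacle is to decide which of the three values are realised, for which I would use three criteria: $k+1$ occurs iff $D\ne\emptyset$; $k$ is absent iff $(P\cup Q)\setminus D=\emptyset$, i.e. iff $P=Q$, which, since $|P|=|Q|=k$, is exactly the case $D=P=I(a)-a_0$; and $k-1$ occurs iff $P\cup Q\subsetneq A^*$, checked by comparing $|P\cup Q|=2k-|D|$ with $|A^*|=k\cdot\frac{n-1}{k}$. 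When $\frac{n-1}{k}>2$ one has $2k-|D|<k\cdot\frac{n-1}{k}$ for every $D$, so $k-1$ appears; when $\frac{n-1}{k}=2$ it appears iff $D\ne\emptyset$ (for $D=\emptyset$ the disjoint $P,Q$ already exhaust $A^*$); and the degenerate $\frac{n-1}{k}=1$ makes $g_a$ constant with $\lambda_\alpha=n$ throughout. Feeding these criteria into the three possible positions of $D$ (namely $D=\emptyset$, $\emptyset\subsetneq D\subsetneq P$, and $D=P$) reproduces each line of the stated set $S$.
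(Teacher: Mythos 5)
Your proof is correct, and its core step---the local comparison showing that for each $\alpha\in A^*$ the solution sets of $g_a$ and of $f$ can differ only at $x=a_0$ and $x=a_0-\alpha$, yielding $\lambda_\alpha=\lambda+[\alpha\in I(a)-a_0]+[\alpha\in a_0-I(a)]$---is exactly the paper's change-point analysis, though you justify more carefully than the paper does that neither of these two points can ever be a solution for $f$ itself (the paper simply asserts that solutions are ``appended''). Where you genuinely diverge is in establishing $\lambda=k-1$. You obtain it up front by a direct double count on $f$ via Lemma~\ref{lm:property_A_of_ZD}: $(n-1)\lambda=\sum_{b}r_b(r_b-1)=\frac{n-1}{k}\,k(k-1)=(n-1)(k-1)$. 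The paper instead proves it last and indirectly: it observes that $g_a$ is almost balanced, so Lemma~\ref{lm:property_BB_of_ZD} pins the average at $\overline{\lambda}=k-1+\frac{2k}{n-1}$, while the case distribution of the $\lambda_\alpha$ gives $\overline{\lambda}=\lambda+\frac{2k}{n-1}$; equating the two expressions yields $\lambda=k-1$. Your route is shorter and uses only the elementary counting identity applied to the original ZDB function; the paper's route has the side effect of computing $\overline{\lambda}$ explicitly for $g_a$, a value it reuses later (the proof of Theorem~\ref{th:app_fhs3} cites $\overline{\lambda}=k-1+\frac{2k}{n-1}$ ``from the proof of Theorem~\ref{th:construct_nzdb_on_generic_group}''), so if your argument replaced the paper's, that computation would need to be supplied there instead---though it follows in one line from your distribution of the $\lambda_\alpha$. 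Finally, your explicit cardinality criteria for which of $k-1$, $k$, $k+1$ are actually attained (comparing $|P\cup Q|=2k-|D|$ with $|A^*|=n-1$) make the boundary case $\frac{n-1}{k}=2$, $D=\emptyset$ cleaner than the paper's bare remark that $E=A^*$ there.
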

\begin{proof}
The proof is similar with that of Theorem~\ref{th:construct_nzdb_on_generic_ring}. For any nonzero element $\alpha\in A^*$, if $\alpha$ belongs to either $I(a)-a_0$ or $a_0-I(a)$, then it will append one more solution for $g_a(x +\alpha)=g_a(x)$, either $x=a_0$ or  $x=a_0-\alpha$. If $\alpha$ belongs to both $I(a)-a_0$ and $a_0-I(a)$, then it will append two more solutions for $g_a(x +\alpha)=g_a(x)$, $x=a_0$ and $x=a_0-\alpha$. For the rest, the solutions for $g_a(x +\alpha)=g_a(x)$ remain the same.

Define $\lambda_\alpha=|\{x\in A \mid g_a(x+\alpha)=g_a(x)\}|$ for every nonzero element $\alpha \in A^*$. Denote $E=(I(a)-a_0) \cup (a_0-I(a))$. Assume that $\frac{n-1}{k}\ge 2$. Then we have the following four cases.

\begin{itemize}
\item If $\frac{n-1}{k}=2$ and $D=\emptyset$, then $E=A^*$, $a_0=0$ and we have
\begin{equation*}\begin{split}
&\lambda_\alpha=\lambda+1,  \alpha\in E.
\end{split}\end{equation*}

\item If $\frac{n-1}{k}>2$ and $D=\emptyset$, then we have
\begin{equation*}\begin{split}
&\lambda_\alpha=\begin{cases}
\lambda+1, & \mbox{if $\alpha\in I(a)-a_0$},\\
\lambda+1, & \mbox{if $\alpha\in a_0-I(a)$},\\
\lambda , & \mbox{otherwise}.\\
\end{cases}
\end{split}\end{equation*}

\item If $\frac{n-1}{k}\ge 2$ and $D=I(a)-a_0$, then $D=I(a)-a_0 = a_0-I(a)$ and we have
\begin{equation*}\begin{split}
&\lambda_\alpha=\begin{cases}
\lambda+2, & \mbox{if $\alpha\in D$},\\
\lambda , & \mbox{otherwise}.\\
\end{cases}
\end{split}\end{equation*}

\item If $\frac{n-1}{k}\ge 2$ and $\emptyset \varsubsetneq D\varsubsetneq I(a)-a_0$, then  we have
\begin{equation*}\begin{split}
&\lambda_\alpha=\begin{cases}
\lambda+1, & \mbox{if $\alpha\in E\setminus D$},\\
\lambda+2, & \mbox{if $\alpha\in D$},\\
\lambda , & \mbox{otherwise}.\\
\end{cases}
\end{split}\end{equation*}
\end{itemize}

To finish the proof, we assert that $\lambda=k-1$. On one hand, since $g_a$ satisfies the condition in Lemma~\ref{lm:property_BB_of_ZD}, we have
$$\overline{\lambda}=\frac{(n-\epsilon)(n+\epsilon-m)}{m(n-1)}=k-1+\frac{2k}{n-1},$$
where $m=\frac{n-1}{k}$ and $\epsilon=1$. On the other hand, note that $|I(a)-a_0|=|a_0-I(a)|=k$ and $|E|=2k-|D|$. Hence according to the definition of $\overline{\lambda}$, i.e., Equation \eqref{eq:define_of_avg_lambda}, we have
\begin{equation*}\begin{split}
\overline{\lambda}=&\frac{|D|(\lambda+2)+(|E|-|D|)(\lambda+1)+(n-1-|E|)\lambda}{n-1} \\
=&\lambda+\frac{2k}{n-1}.
\end{split}\end{equation*}
Consequently $\lambda=k-1$.
\end{proof}
\begin{remark}
Let $a_0=0$ and $a=1$. Then the ZDB functions in Proposition~\ref{prop:construct_zdb_on_generic_ring} satisfy the conditions in Theorem~\ref{th:construct_nzdb_on_generic_group}. A survey of such ZDB functions is in~\cite{yi2019note}.
\end{remark}

\begin{defn}
Let $f$ be a function $A$ to $B$. $f$ is Type-A if $\SP(f)=\{1,e^{m-1}\}$, and is Type-B if $\SP(f)=\{e+1,e^{m-1}\}$ where $m=|f(A)|$ and $e$ is a positive integer.
\end{defn}

\begin{prop}\cite[Corollary 1]{yi2019note}\label{prop:zdb_2_on_zn}
Let $n=p_1^{r_1}p_2^{r_2}\cdots p_k^{r_k}$, where $p_1<p_2<\cdots < p_k$ are odd prime numbers, and $r_1, r_2, \ldots, r_k$ are positive integers. Then for any positive integers $e$ such that $e(e-1)\mid \gcd(p_1-1, p_2-1, \ldots, p_k-1)$, there exist $(en,\frac{en-1}{e-1}+1,e-2)$ ZDB functions from $(\Z_{en},+)$ to $(\Z_{\frac{en-1}{e-1}+1},+)$.
\end{prop}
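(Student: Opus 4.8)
The plan is to realize these functions as instances of the generic ring construction of Proposition~\ref{prop:construct_zdb_on_generic_ring}. Matching parameters, a ring $R$ of order $en$ together with a multiplicative subgroup $G$ of size $|G|=e-1$ produces an $(en,\frac{en-1}{e-1}+1,e-2)$ ZDB function, which is exactly the target. Since the domain must be the cyclic group $(\Z_{en},+)$, and a finite ring with identity whose additive group is cyclic is forced to be the residue ring, I would take $R=\Z_{en}$. The first routine observation is that $e$ and $n$ are coprime: from $e\mid p_1-1$ we get $e<p_1$, so every prime divisor of $e$ is smaller than every $p_i$; hence by the Chinese Remainder Theorem $\Z_{en}\cong \Z_e\times\prod_{i=1}^{k}\Z_{p_i^{r_i}}$ and $(\Z_{en})^{\times}\cong(\Z_e)^{\times}\times\prod_{i=1}^{k}(\Z_{p_i^{r_i}})^{\times}$.

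Next I would build the subgroup $G$ coordinatewise. Because $(e-1)\mid e(e-1)\mid p_i-1$, each cyclic group $(\Z_{p_i^{r_i}})^{\times}$, of order $p_i^{r_i-1}(p_i-1)$, contains an element $g_i$ of order exactly $e-1$; choosing a compatible unit $g_0$ in the $\Z_e$-coordinate, I set $g=(g_0,g_1,\dots,g_k)$ and $G=\langle g\rangle$, arranged so that the $\lcm$ of the coordinate orders is $e-1$, i.e.\ $|G|=e-1$. With $R=\Z_{en}$ and this $G$, Proposition~\ref{prop:construct_zdb_on_generic_ring} outputs the desired function provided its hypothesis $(G-1)\setminus\{0\}\subset R^{\times}$ holds, so everything reduces to that single verification.

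The main obstacle is precisely checking $(G-1)\setminus\{0\}\subset(\Z_{en})^{\times}$. For $g'\in G$ with $g'\ne 1$ one must show $\gcd(g'-1,en)=1$, equivalently that $g'-1$ is invertible in every CRT factor. At the primes $p_i$ this is the easy half: since $g_i$ has order exactly $e-1$ modulo $p_i$, no nontrivial power of $g'$ equals $1$ modulo $p_i$, so the $p_i$-component of $g'-1$ is a unit, and this is where the divisibility $(e-1)\mid p_i-1$ is spent. The genuinely delicate half is the $\Z_e$-coordinate: I must guarantee that $g_0^{j}-1$ is coprime to $e$ for every $j$ with $g^{j}\ne 1$, and it is exactly to make the behaviour in the $\Z_e$-part compatible with the order-$(e-1)$ classes that the extra factor $e$ in the hypothesis $e(e-1)\mid p_i-1$ is needed. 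I expect this coprimality-at-$e$ step to be the crux; when $e$ is prime it follows cleanly by taking $g_0$ a primitive root modulo $e$, so that $g_0^{j}\ne 1$ for $0<j<e-1$ and the primality of $e$ forces $g_0^{j}-1$ to be coprime to $e$, whereas the general case needs the full strength of the hypothesis and, plausibly, a more careful generalized-cyclotomy description of the partition.

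As a fallback, if controlling the $\Z_e$-coordinate through a single multiplicative subgroup proves awkward, I would verify the ZDB property directly at the level of the associated partition (difference family): take the preimages to be the $G$-orbits together with the singleton $\{0\}$, and count, for each nonzero $\alpha$, the solutions of $f(x+\alpha)=f(x)$, using Lemma~\ref{lm:property_A_of_ZD} as a global consistency check, since the total number of such pairs must equal $(e-2)(en-1)$, confirming the constant value $\lambda=e-2$.
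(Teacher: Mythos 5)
Your overall strategy cannot prove this statement: realizing these functions through Proposition~\ref{prop:construct_zdb_on_generic_ring} with $R=\Z_{en}$ and $|G|=e-1$ is provably impossible whenever $e$ is composite, and composite $e$ is both allowed by the hypothesis $e(e-1)\mid\gcd(p_1-1,\dots,p_k-1)$ and actually needed in this paper (the FHS example following Corollary~\ref{con:app_fhs2} takes $e=4$, $n=13$, i.e.\ a $(52,18,2)$ ZDB function on $\Z_{52}$). Here is the obstruction, which is exactly the ``$\Z_e$-coordinate'' step you flagged as the crux: write $\Z_{en}\cong\Z_e\times\Z_n$ (valid, as you note, since $\gcd(e,n)=1$) and let $\pi_e$ denote reduction modulo $e$. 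If $g\in G$, $g\ne 1$, but $\pi_e(g)=1$, then $g-1$ lies in the ideal $e\Z_{en}$, so it is a nonzero non-unit and the condition $(G-1)\setminus\{0\}\subset(\Z_{en})^\times$ fails. Hence that condition forces $\pi_e$ to be injective on $G$, so $e-1=|G|\le|(\Z_e)^\times|=\phi(e)$; since $\phi(e)\le e-1$ always, this forces $\phi(e)=e-1$, i.e.\ $e$ prime. So no choice of $g_0$ --- indeed no subgroup of $(\Z_{en})^\times$ of order $e-1$ whatsoever, however constructed --- can satisfy the hypothesis of Proposition~\ref{prop:construct_zdb_on_generic_ring} when $e$ is composite. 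Concretely, for $e=4$, $n=13$, every element of order $3$ in $(\Z_{52})^\times$ is $\equiv 1\pmod 4$. Your fallback dies with the main plan: for all nonzero $G$-orbits to have size $e-1$ the action must be free, which again requires each $g-1$ to be a non-zero-divisor, i.e.\ a unit, of $\Z_{en}$. The prime-$e$ case of your argument is correct, but it covers only a fragment of the statement.

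For comparison: this paper does not prove the proposition at all --- it quotes it from Corollary 1 of \cite{yi2019note}, and the construction there (originating in \cite{cai2017zero}) is structurally different from yours. The group used has order $e(e-1)$, not $e-1$, and it lives in $(\Z_n)^\times$, acting only on the $\Z_n$-component: one takes a cyclic $G=\langle g\rangle\le(\Z_n)^\times$ of order $e(e-1)$ with $(G-1)\setminus\{0\}\subset(\Z_n)^\times$ (this is where $e(e-1)\mid p_i-1$ is really spent, via your ``easy half'' argument applied with order $e(e-1)$), splits $\Z_{e(e-1)}\cong\Z_e\times\Z_{e-1}$ by CRT, and partitions $\Z_e\times\Z_n$ into the singleton $\{(0,0)\}$, the block $(\Z_e\setminus\{0\})\times\{0\}$, and, for each $G$-orbit $xG$ of $\Z_n\setminus\{0\}$, blocks of size $e-1$ of the shape $\{(c+t,\,xg^{j_t}):0\le t\le e-2\}$, where the exponents $j_t$ lie in a fixed residue class modulo $e$ and run over all residues modulo $e-1$. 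These blocks have pairwise distinct coordinates in both components but are not orbits of any group acting on $\Z_{en}$ --- which is precisely how the construction evades the obstruction above --- and establishing $\lambda=e-2$ is a genuine difference count rather than an appeal to Proposition~\ref{prop:construct_zdb_on_generic_ring}.
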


\begin{prop}\cite[Corollary 2]{yi2019note}\label{prop:zdb_2_on_fq}
Let $n=p_1^{r_1}p_2^{r_2}\cdots p_k^{r_k}$, where $p_1<p_2<\cdots < p_k$ are prime numbers, and $r_1, r_2, \ldots, r_k$ are positive integers. Denote $R=\prod_{i=1}^{k}\F_{p_i^{r_i}}$. Then for any positive integer $e$ such that $e(e-1)\mid \gcd(p_1^{r_1}-1, p_2^{r_2}-1, \ldots, p_k^{r_k}-1)$, there exist $(en,\frac{en-1}{e-1}+1,e-2)$ ZDB functions from $(R\times \Z_e,+)$ to $(\Z_{\frac{en-1}{e-1}+1}, +)$.
\end{prop}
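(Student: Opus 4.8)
The plan is to realise the claim as the construction of a partitioned difference family on $R\times\Z_e$, exactly parallel to the cyclic case of Proposition~\ref{prop:zdb_2_on_zn}. Since $e\mid p_i^{r_i}-1$ forces $\gcd(p_i,e)=1$ for every $i$, we have $\gcd(n,e)=1$, and the target parameters $(en,\frac{en-1}{e-1}+1,e-2)$ say precisely that we must split the $en$ elements of $R\times\Z_e$ into one singleton and $\frac{en-1}{e-1}$ blocks of size $e-1$ so that every nonzero element occurs as an internal difference exactly $e-2$ times. First I would extract the multiplicative scaffolding from the hypothesis: since $e(e-1)\mid p_i^{r_i}-1$, each $\F_{p_i^{r_i}}^{\times}$ contains cyclic subgroups of orders $e-1$ and $e(e-1)$, and taking them diagonally yields $G\subset H\subset R^{\times}$ with a generator $\beta=(\beta_1,\dots,\beta_k)$ of $H$, $|G|=e-1$, $|H|=e(e-1)$ and $(H-1)\setminus\{0\}\subset R^{\times}$. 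By Proposition~\ref{prop:construct_zdb_on_generic_ring} the latter condition makes $G$ (and $H$) act freely on $R^{*}$, so the $G$-cosets already give the $(n,\frac{n-1}{e-1}+1,e-2)$ ZDB function on $R$ alone; the real work is to thread the extra coordinate $\Z_e$ through this partition.

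Next I would fix the two degenerate blocks. I put the singleton at $\{(0,0)\}$ and collect the remaining points of $\{0\}\times\Z_e$ into one block $\{0\}\times(\Z_e\setminus\{0\})$ of size $e-1$. A one-line count shows this block alone realises each pure difference $(0,w)$ with $w\neq0$ exactly $e-2$ times, which already matches $\lambda$; hence no other block may contain two points with equal $R$-coordinate. It then remains to split $R^{*}\times\Z_e$ into $\frac{(n-1)e}{e-1}$ blocks of size $e-1$, each with pairwise distinct $R$-coordinates, so that the differences of pure type $(y,0)$ and of mixed type $(y,w)$ with $y\neq0\neq w$ are each covered exactly $e-2$ times. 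Here I would fix the $R$-coordinates of a block to a full $G$-coset $r\beta^{a}G$ of size $e-1$, and use the quotient $H/G\cong\Z_e$ together with the residue $j\mapsto j\bmod(e-1)$ of the discrete logarithm to distribute the $e-1$ distinct $\Z_e$-labels inside each block; the cyclotomic balance of the $G$-cosets in $R$ is what should force the two difference-types to come out uniform.

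The hard part is exactly this last step, and the obstruction is arithmetic rather than cosmetic. Because $\gcd(e,e-1)=1$, there is no action of a group of order $e-1$ that cyclically permutes the $\Z_e$-coordinate: a shift by a fixed $c\in\Z_e$ attached to a generator of $G$ would force $(e-1)c\equiv0\pmod e$, hence $c\equiv0$. Equivalently, if one tried the naive route of applying Proposition~\ref{prop:construct_zdb_on_generic_ring} directly to the product ring $R\times\Z/e\Z$, a subgroup of order $e-1$ with $(G-1)\setminus\{0\}\subset(R\times\Z/e\Z)^{\times}$ would have to inject into $(\Z/e\Z)^{\times}$, whose order $\phi(e)$ is strictly smaller than $e-1$ as soon as $e$ is composite. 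So the blocks genuinely cannot be multiplicative cosets, Proposition~\ref{prop:construct_zdb_on_generic_ring} does not apply off the shelf, and the $\Z_e$-labels must be assigned by hand with the simultaneous uniformity of the pure and mixed differences verified by a cyclotomic counting argument inside the factors $\F_{p_i^{r_i}}$. Once that balance is checked, the three families of blocks together yield the claimed $(en,\frac{en-1}{e-1}+1,e-2)$ ZDB function, establishing the proposition as the product-of-fields analogue of Proposition~\ref{prop:zdb_2_on_zn}.
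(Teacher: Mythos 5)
First, a point of comparison: the paper does not prove this statement at all --- it is imported verbatim by citation from \cite[Corollary 2]{yi2019note}, so there is no in-paper proof to match your argument against. Judged on its own, your proposal sets up the right frame (one singleton, the block $\{0\}\times(\Z_e\setminus\{0\})$ covering the pure differences $(0,w)$ exactly $e-2$ times, and size-$(e-1)$ blocks with pairwise distinct $R$-coordinates partitioning $R^{*}\times\Z_e$), and your two ``obstruction'' observations are correct. But the proof stops exactly where the theorem begins: the existence of a $\Z_e$-labelling of the coset blocks making every \emph{mixed} difference $(y,w)$, $y\neq 0$, appear exactly $e-2$ times is the entire content of the result, and you defer it to an unspecified ``cyclotomic counting argument''. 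Cyclotomy cannot supply it: the condition $(G-1)\setminus\{0\}\subset R^{\times}$ only controls the $R$-components --- it says that for each $y\neq 0$ there are exactly $e-2$ ordered pairs $(u,u')$ lying in a common $G$-coset with $u-u'=y$ --- and it says nothing about how the label differences of those pairs distribute over $\Z_e$.

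Moreover, the labelling rule you hint at provably fails. Each $u\in R^{*}$ must receive each label of $\Z_e$ exactly once across the $e$ blocks sitting over its coset, so the labels of $u$ form a permutation $\pi_u$ of $\Z_e$. If, as in your ``residue of the discrete logarithm plus $H/G$-shift'' recipe, the $e$ labellings over a fixed coset differ by additive constants, then every pair $(u,u')$ has the \emph{same} label difference in all $e$ blocks; for fixed $y$ the $e-2$ relevant pairs then contribute counts in multiples of $e$, while balance requires each of the $e$ targets $(y,w)$ to receive exactly $e-2$, forcing $e\mid e-2$ --- impossible for $e\ge 3$. The natural repair, demanding that each pair's label differences $\{\pi_u(b)-\pi_{u'}(b)\,:\,b\}$ sweep out all of $\Z_e$, is exactly an $(e,e-1;1)$-difference matrix over $\Z_e$; summing gives $\sum_b(\pi_u(b)-\pi_{u'}(b))=0$ while $\sum_{w\in\Z_e}w\equiv e/2\pmod e$, so this is impossible for every even $e$. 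Yet the proposition expressly allows even $e$ (e.g.\ $e=4$, $p_1^{r_1}=13$, which the paper itself exploits right after Corollary~\ref{con:app_fhs2}). Hence for even $e$ the balance can only come from compensation \emph{across different cosets} (the $e-2$ pairs realising a given $y$ lie in distinct cosets, and their non-uniform label-difference multisets must be chosen complementary), a mechanism entirely absent from your sketch. Indeed, the construction actually used by the cited source does not even live in your frame: decoding the $(52,18,2)$ ZDB function underlying the paper's FHS example $\mathcal{T}_1$, one finds blocks that mix elements of $\{0\}\times\Z_e$ with elements of $R^{*}\times\Z_e$ and blocks whose $R$-coordinates are not $G$-cosets. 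So what you have is a correct reduction plus a correct diagnosis of why the easy arguments fail, but not a proof; the key combinatorial step is missing, and the one concrete recipe offered for it cannot work.
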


It is interesting that Theorem~\ref{th:construct_nzdb_on_generic_group} gives many ZD functions from ZDB functions without caring how these ZDB functions are constructed. In particular, it converts a ZDB function of Type-A into a ZD function of Type-B. In~\cite{yi2019note}, many ZDB functions of Type-A are summarized, namely Proposition~\ref{prop:zdb_1_on_zn}, Proposition~\ref{prop:zdb_1_on_fq}, Proposition~\ref{prop:zdb_2_on_zn} and Proposition~\ref{prop:zdb_2_on_fq}. Although not all the generated ZD functions have good applications, we will show what kind of ZD functions would lead to optimal objects, such as constant weight codes, difference systems of sets and frequency-hopping sequences.

\section{Applications of Zero-Difference Functioins}\label{se:app}
We remark that the frameworks of applications using ZDB functions are still valid if ZDB functions are replaced by ZD functions. Thus many objects can be obtained by ZD functions. In this section, we will give the conditions of these objects being optimal. In this paper, we only concern whose optimal objects, since optimal objects can save more electronic power.


\subsection{Constant Composition Codes}
An $(n, M, d, [w_0, w_1, \ldots, w_{q-1}])_q$ constant composition code (CCC) is a code over an Abelian group $G$ with length $n$, size $M$ and minimum Hanmming distance $d$ such that in every codeword the element $b_i$ appears exactly $w_i$ times for every $i$, where $b_i\in G$.
Let $A_q(n, d, [w_0, w_1, \ldots, w_{q-1}])$ be the maximum size of an $(n, M, d, [w_0, w_1, \ldots, w_{q-1}])_q$ CCC. A CCC is optimal if the bound in Lemma~\ref{lm:bound_of_CCC} is met.
\begin{lemma}~\cite{luo2003constant}\label{lm:bound_of_CCC}
If $nd-n^2+\sum_{i=0}^{q-1}w_i^2>0$, then
$$A_q(n, d, [w_0, w_1, \ldots, w_{q-1}])\le\frac{nd}{nd-n^2+\sum_{i=0}^{q-1}w_i^2}.$$
\end{lemma}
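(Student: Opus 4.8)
The plan is to establish this Plotkin-type bound by double-counting the sum of all pairwise Hamming distances in a code that attains the parameters. Let $C$ be an $(n, M, d, [w_0, w_1, \ldots, w_{q-1}])_q$ constant composition code with $M = A_q(n, d, [w_0, w_1, \ldots, w_{q-1}])$, and set $T = \sum_{\{c,c'\}\subseteq C,\, c\ne c'} d(c,c')$, the total distance over all unordered pairs of distinct codewords. Since every such pair is at Hamming distance at least $d$, we immediately obtain the lower bound $T \ge \binom{M}{2} d$. The remaining task is to bound $T$ from above and then solve the combined inequality for $M$.

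To bound $T$ from above I would count coordinate by coordinate. For each position $j \in \{1,\ldots,n\}$ and symbol $i \in \{0,\ldots,q-1\}$, let $n_{j,i}$ be the number of codewords carrying symbol $i$ in coordinate $j$. Counting, within each column, the pairs of codewords that disagree there gives
$$T = \sum_{j=1}^{n}\left(\binom{M}{2} - \sum_{i=0}^{q-1}\binom{n_{j,i}}{2}\right).$$
The constant composition hypothesis now enters through two identities: summing the occurrences of symbol $i$ over all positions yields $\sum_{j=1}^{n} n_{j,i} = M w_i$, and $\sum_{i=0}^{q-1} w_i = n$ because each codeword has length $n$. Applying the Cauchy--Schwarz inequality (equivalently, the convexity of $x\mapsto\binom{x}{2}$) to the $n$ terms $n_{j,i}$ with prescribed sum $M w_i$ gives $\sum_{j} n_{j,i}^2 \ge (M w_i)^2/n$, and hence a lower bound on $\sum_{j,i}\binom{n_{j,i}}{2}$, i.e. an upper bound on $T$.

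Combining the two estimates, writing $\binom{n_{j,i}}{2} = \tfrac12(n_{j,i}^2 - n_{j,i})$ and using $\sum_i w_i = n$ to collapse the linear terms, the chain $\binom{M}{2} d \le T \le n\binom{M}{2} - \tfrac12\!\left(\tfrac{M^2}{n}\sum_i w_i^2 - Mn\right)$ rearranges, after dividing by $M$ and clearing denominators, into
$$M\left(nd - n^2 + \sum_{i=0}^{q-1} w_i^2\right) \le nd.$$
Under the hypothesis $nd - n^2 + \sum_{i=0}^{q-1} w_i^2 > 0$ this divides to give exactly the claimed bound.

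The step I expect to be the genuine obstacle is the final bookkeeping rather than any single inequality: the bound is tight, so the quantity $nd - n^2 + \sum_i w_i^2$ must emerge on the nose, which forces one to track and cancel the linear correction terms from $\binom{n_{j,i}}{2}$ using $\sum_i w_i = n$ with care. The convexity step itself is routine, but it is worth noting that the sign condition $nd - n^2 + \sum_i w_i^2 > 0$ is precisely what guarantees the direction of the inequality is preserved in the last division.
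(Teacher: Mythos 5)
Your proof is correct: the double count of pairwise Hamming distances, the column identity $\sum_{j} n_{j,i} = Mw_i$ together with $\sum_i w_i = n$, and the convexity (Cauchy--Schwarz) estimate $\sum_j n_{j,i}^2 \ge (Mw_i)^2/n$ combine exactly as you indicate to yield $M\bigl(nd - n^2 + \sum_{i} w_i^2\bigr) \le nd$, and the positivity hypothesis is precisely what permits the final division. Note that the paper itself offers no proof of this lemma---it is quoted directly from \cite{luo2003constant}---and your Plotkin-type averaging argument is essentially the one in that reference, so your approach agrees with the source rather than diverging from anything in the paper.
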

Obviously ZD functions can be used to construct CCCs and these CCCs in Proposition~\ref{prop:construct_ccc} is optimal only if the ZD function is a ZDB function.
\begin{prop}\cite{ding2008optimal}\label{prop:construct_ccc}
Denote
$$A=\{a_0, a_1, \ldots, a_{n-1}\}, B=\{b_0, b_1, \ldots, b_{m-1}\}.$$
Let $f$ be a function from $A$ to $B$. If $f$ is an $(n, m, S)$ ZD function, then
\begin{equation}\label{eq:define_of_code}
\mathcal{C}_f=\{(f(a_0+a_i), \ldots, f(a_{n-1}+a_i))\mid 0\le i \le n-1\}
\end{equation}
is an $(n, n, n-\lambda, [w_0, w_1, \ldots, w_{m-1}])_m$ CCC over $B$, where $w_i=|\{x\in A \mid f(x)=b_i\}|$ and $\lambda = \max_{x \in S}{x}$.
\end{prop}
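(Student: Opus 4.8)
The plan is to verify the three defining parameters of a constant composition code directly from the group structure of $A$ together with the zero-difference property of $f$, treating the codewords $c_i=(f(a_0+a_i),\ldots,f(a_{n-1}+a_i))$ as indexed by $i\in\{0,\ldots,n-1\}$. The length is immediate: each $c_i$ has exactly $n$ coordinates, one for every $a_j$.

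For the constant composition claim, I would fix $i$ and count, for each symbol $b_t\in B$, the number of coordinates $j$ with $f(a_j+a_i)=b_t$. The key observation is that the translation $a_j\mapsto a_j+a_i$ is a bijection of the group $A$, so as $j$ runs over all indices the argument $a_j+a_i$ runs over all of $A$ exactly once. Hence the number of coordinates equal to $b_t$ is $|\{x\in A\mid f(x)=b_t\}|=w_t$, independent of $i$, which gives the composition $[w_0,\ldots,w_{m-1}]$ for every codeword.

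The heart of the argument is the minimum-distance computation, where I would rewrite a Hamming distance as a zero-difference count. For $i\ne i'$ put $\alpha=a_i-a_{i'}\in A^*$. The number of positions where $c_i$ and $c_{i'}$ agree is $|\{j\mid f(a_j+a_i)=f(a_j+a_{i'})\}|$; substituting $x=a_j+a_{i'}$ (again a bijection of $A$) turns the defining condition into $f(x+\alpha)=f(x)$, so the number of agreements is exactly $\lambda_\alpha$ and the Hamming distance is $n-\lambda_\alpha$. Minimizing over all ordered pairs with $i\ne i'$ yields $\min_{\alpha\in A^*}(n-\lambda_\alpha)=n-\max_{\alpha\in A^*}\lambda_\alpha=n-\lambda$, since $\lambda=\max_{x\in S}x$ and $S=\{\lambda_\alpha\mid\alpha\in A^*\}$; the value $n-\lambda$ is attained because $\lambda\in S$.

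Finally, the same distance formula shows the $n$ codewords are pairwise distinct precisely when $n\notin S$ (equivalently when $f$ is non-constant), so the size equals $n$; I would flag the degenerate case $\frac{n-1}{k}=1$, where $S=\{n\}$, separately. I do not expect a genuine obstacle here: every step reduces to the fact that adding a fixed group element permutes $A$, and the only points demanding care are matching the agreement count to $\lambda_\alpha$ itself (rather than to its complement) and confirming that taking the \emph{maximum} of $S$ produces the \emph{minimum} distance.
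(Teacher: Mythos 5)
The paper itself gives no proof of this proposition --- it is quoted from Ding's 2008 paper --- so the only comparison available is with the standard argument, and that is exactly what you give. Your three main steps are correct: the length is trivially $n$; the constant composition follows because $x \mapsto x+a_i$ permutes the group $A$, so every codeword has the same symbol multiplicities $w_0,\ldots,w_{m-1}$ as $f$ itself; and the substitution $x=a_j+a_{i'}$ with $\alpha=a_i-a_{i'}\in A^*$ correctly converts the number of agreements between $c_i$ and $c_{i'}$ into $\lambda_\alpha$, giving pairwise distance $n-\lambda_\alpha$ and minimum distance $n-\lambda$, attained because $\lambda\in S$.

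One correction to a side remark: your parenthetical claim that $n\notin S$ is ``equivalently when $f$ is non-constant'' is false. Having $\lambda_\alpha=n$ for a single $\alpha\ne 0$ says only that $f$ is invariant under translation by $\alpha$, i.e.\ constant on the cosets of the subgroup generated by $\alpha$, not constant on all of $A$. For instance, on $A=\Z_2\times\Z_2$ the projection $f(x,y)=x$ is non-constant, yet $\lambda_{(0,1)}=4=n$, so $n\in S$, the codewords indexed by $(0,0)$ and $(0,1)$ coincide, and the code has size $2<n$ (this also shows that the proposition, read literally, silently assumes the non-degenerate case). The correct statement is the one you prove immediately before the parenthetical: the $n$ codewords are pairwise distinct if and only if $n\notin S$, equivalently $\lambda<n$; constancy of $f$ is sufficient but not necessary for this to fail. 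Since you correctly isolate $n\notin S$ as the distinctness criterion, this slip does not damage the structure of your proof, but the claimed equivalence should be deleted.
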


\begin{prop}\label{th:app_of_ccc}
In Proposition~\ref{prop:construct_ccc}, if $\mathcal{C}_f$ is optimal, then $S=\{ \lambda\}$, i.e., $f$ is a ZDB function.
\end{prop}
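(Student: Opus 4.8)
The plan is to convert the optimality hypothesis into a single numerical equation for $\sum_i w_i^2$ and then compare it against the identity $\sum_{b\in B} r_b^2 = (n-1)\overline{\lambda} + n$ supplied by Lemma~\ref{lm:property_AA_of_ZD}. First I would read off the exact parameters that Proposition~\ref{prop:construct_ccc} attaches to $\mathcal{C}_f$: it is an $(n, n, n-\lambda, [w_0,\ldots,w_{m-1}])_m$ code, so in applying Lemma~\ref{lm:bound_of_CCC} we set $q=m$, the length to $n$, the distance to $d=n-\lambda$, and $w_i = r_{b_i}$. The only computation needed at this stage is to simplify the denominator $nd - n^2 + \sum_i w_i^2$; with $d=n-\lambda$ it collapses to $\sum_i w_i^2 - n\lambda$, turning the bound into
$$A_m(n, n-\lambda, [w_0,\ldots,w_{m-1}]) \le \frac{n(n-\lambda)}{\sum_i w_i^2 - n\lambda}.$$

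Next, since $\mathcal{C}_f$ has exactly $n$ codewords, optimality means that $n$ equals this right-hand side. Clearing denominators gives $\sum_i w_i^2 - n\lambda = n-\lambda$, i.e.
$$\sum_i w_i^2 = n + \lambda(n-1).$$
Now I would invoke Lemma~\ref{lm:property_AA_of_ZD}. Because the $w_i$ are precisely the preimage sizes $r_{b_i}$ ranging over all of $B$, we have $\sum_i w_i^2 = \sum_{b\in B} r_b^2 = (n-1)\overline{\lambda} + n$. Equating this with the displayed value of $\sum_i w_i^2$ forces $(n-1)\overline{\lambda} = (n-1)\lambda$, hence $\overline{\lambda} = \lambda$.

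The final step is to interpret $\overline{\lambda} = \lambda$. By the definition in Equation~\eqref{eq:define_of_avg_lambda}, $\overline{\lambda}$ is the arithmetic mean of the multiset $\{\lambda_\alpha \mid \alpha\in A^*\}$, while $\lambda = \max_{i\in S} i$ is its maximum; each $\lambda_\alpha \le \lambda$, so the mean can reach the maximum only when $\lambda_\alpha = \lambda$ for every $\alpha\in A^*$. Therefore $S = \{\lambda\}$, which is exactly the statement that $f$ is a ZDB function. This last average-equals-maximum argument is the conceptual heart of the proof; the rest is bookkeeping, although one should also note in passing that the positivity hypothesis $\sum_i w_i^2 - n\lambda > 0$ required to invoke Lemma~\ref{lm:bound_of_CCC} is guaranteed by the existence of the optimal code, so no separate verification is needed. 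I do not expect a genuine obstacle here: the only place to be careful is matching the code parameters of Proposition~\ref{prop:construct_ccc} to the symbols in Lemma~\ref{lm:bound_of_CCC}, and using that $\mathcal{C}_f$ indeed has size $n$ rather than fewer codewords.
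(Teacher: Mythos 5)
Your proposal is correct and follows essentially the same route as the paper: translate optimality of $\mathcal{C}_f$ into $\sum_i w_i^2 = n\lambda + n - \lambda$, combine with the counting identity for preimage sizes, and conclude that every $\lambda_\alpha$ must equal the maximum $\lambda$. The only cosmetic difference is that you invoke Lemma~\ref{lm:property_AA_of_ZD} and argue ``mean equals maximum,'' whereas the paper uses Lemma~\ref{lm:property_A_of_ZD} and writes $n-1=\sum_{\alpha}\lambda_\alpha/\lambda$ with each ratio at most $1$ --- the same argument, except your phrasing harmlessly avoids the division by $\lambda$.
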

\begin{proof}
If $\mathcal{C}_f$ is optimal, then
$$n=\frac{nd}{nd-n^2+\sum_{i=0}^{q-1}{w_i^2}}, $$
where $d=n-\lambda$ and $q=m$. We have
$$\sum_{i=0}^{m-1}{w_i^2}=n\lambda+n-\lambda.$$
It follows from Lemma~\ref{lm:property_A_of_ZD} that
$$n\lambda-\lambda=\sum_{\alpha \in A^*}{\lambda_\alpha},$$
where $\lambda_\alpha=|\{x \in A \mid f(x+\alpha)=f(x)\}|$ for every $\alpha \in A^*$.
It implies
$$n-1=\sum_{\alpha \in A^*}{\frac{\lambda_\alpha}{\lambda}}.$$
We have $0\le \frac{\lambda_\alpha}{\lambda} \le 1$ , since $\lambda_\alpha \in S$ and $\lambda =\max_{x \in S}{x}$. Therefore, $\frac{\lambda_\alpha}{\lambda}=1$, for every $\alpha \in A^*$. It implies that $S=\{\lambda\}$, i.e., $f$ is a ZDB function.
\end{proof}

\subsection{Constant Weight Codes}

An $(n, M, d, w)_q$ constant weight code (CWC) is a code over an Abelian group $\{b_0, b_1, \ldots, b_{q-1}\}$ with length $n$, size $M$ and minimum Hamming distance $d$ such that the Hamming weight of each codeword is $w$. Let $A_q(n, d, w)$ be the maximum size of an $(n, M, d, w)_q$ CWC.

Note that CCCs are special CWCs. They have many applications such as determining the zero error decision feedback capacity of discrete memoryless channels~\cite{telatar1989zero}, multiple-access communications~\cite{dyachkov1984random}, spherical codes for modulation~\cite{ericson1995spherical}, DNA codes~\cite{king2003bounds, milenkovic2005design}, powerline communications~\cite{chu2004constructions, colbourn2004permutation} and frequency hopping~\cite{chu2006constant}.

A CWC is optimal if the bound in Lemma~\ref{lm:bound_of_CWC} is met. Theorem~\ref{th:app_gzdb_cwc} gives many optimal CWCs from ZD functions.

\begin{lemma}~\cite{fu1998constructions}\label{lm:bound_of_CWC}
If $nd-2nw+\frac{q}{q-1}w^2>0$, then
$$A_q(n, d, w)\le\frac{nd}{nd-2nw+\frac{q}{q-1}w^2}.$$
\end{lemma}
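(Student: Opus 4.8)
The plan is to prove this $q$-ary Johnson/Plotkin-type bound by double-counting the total Hamming distance over all ordered pairs of codewords in an arbitrary $(n,M,d,w)_q$ CWC $\mathcal{C}$, and then comparing a global lower bound (coming from the minimum distance) against a coordinate-wise upper bound (coming from convexity). Writing $M=|\mathcal{C}|$, set
$$S=\sum_{\mathbf{c},\mathbf{c}'\in\mathcal{C}} d(\mathbf{c},\mathbf{c}'),$$
the sum being over ordered pairs. Since distinct codewords are at distance at least $d$ while a codeword sits at distance $0$ from itself, the lower bound $S\ge M(M-1)d$ is immediate, and since $\mathcal{C}$ is arbitrary, bounding $M$ bounds $A_q(n,d,w)$.

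For the upper bound I would count $S$ column by column. Fixing a coordinate $j\in\{1,\dots,n\}$ and letting $m_{j,s}$ denote the number of codewords carrying symbol $s$ in position $j$, the ordered pairs agreeing in column $j$ number $\sum_{s} m_{j,s}^2$, so that
$$S=\sum_{j=1}^n\Big(M^2-\sum_{s=0}^{q-1} m_{j,s}^2\Big).$$
Two constraints are then injected: $\sum_{s} m_{j,s}=M$ for every $j$, and, writing $0$ for the zero symbol so that the weight $w$ is the number of nonzero coordinates, the total zero-count satisfies $\sum_{j=1}^n m_{j,0}=M(n-w)$.

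The heart of the argument is a two-layer convexity estimate. First, within each column I would fix the zero-count $m_{j,0}$ and apply Cauchy--Schwarz to the remaining $q-1$ symbol-counts, obtaining
$$\sum_{s=0}^{q-1} m_{j,s}^2\ge m_{j,0}^2+\frac{(M-m_{j,0})^2}{q-1}.$$
The right-hand side is a convex function $h(m_{j,0})$, so summing over $j$ and invoking Jensen's inequality against the global constraint $\sum_j m_{j,0}=M(n-w)$ drives the estimate to its extreme at the common value $\bar m=M(n-w)/n$. Substituting $\bar m$ and simplifying via $n^2-(n-w)^2=2nw-w^2$ and $1+\tfrac{1}{q-1}=\tfrac{q}{q-1}$ collapses everything into
$$S\le \frac{M^2}{n}\Big(2nw-\tfrac{q}{q-1}w^2\Big).$$
Chaining this with $S\ge M(M-1)d$, dividing by $M$, and isolating $M$ yields $M\big(nd-2nw+\tfrac{q}{q-1}w^2\big)\le nd$; the hypothesis $nd-2nw+\tfrac{q}{q-1}w^2>0$ is precisely what permits dividing through while preserving the inequality, giving the claimed bound.

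I expect the main obstacle to be the bookkeeping of the two-layer optimization rather than any single clever step: the inner Cauchy--Schwarz estimate and the outer Jensen estimate both point in the same "lower-bound $\sum_s m_{j,s}^2$" direction, and one must confirm that their equality configurations are simultaneously realizable---all nonzero symbols equidistributed within every column, and all columns carrying the same number of zeros---so that the bound is genuinely the right one and the algebra telescopes to the clean $\tfrac{q}{q-1}w^2$ coefficient. Checking the sign condition under which the final division is legitimate is the other delicate point, but that is handed to us directly as the lemma's hypothesis.
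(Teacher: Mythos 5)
Your proof is correct, but note that there is nothing internal to compare it against: the paper imports this lemma verbatim from \cite{fu1998constructions} and gives no proof of its own. Your argument is essentially the standard one behind that cited bound — a Plotkin-type double count of the distance sum $S=\sum_{\mathbf{c},\mathbf{c}'\in\mathcal{C}}d(\mathbf{c},\mathbf{c}')$ over ordered pairs, with $S\ge M(M-1)d$ from the minimum distance and a columnwise upper bound from convexity. The two-layer estimate checks out: Cauchy--Schwarz over the $q-1$ nonzero symbols gives $\sum_{s}m_{j,s}^2\ge m_{j,0}^2+\frac{(M-m_{j,0})^2}{q-1}=h(m_{j,0})$ with $h$ convex, Jensen against the constraint $\sum_{j}m_{j,0}=M(n-w)$ gives $\sum_j h(m_{j,0})\ge n\,h\bigl(M(n-w)/n\bigr)$, and the algebra via $n^2-(n-w)^2=2nw-w^2$ and $1+\frac{1}{q-1}=\frac{q}{q-1}$ indeed collapses to $S\le\frac{M^2}{n}\bigl(2nw-\frac{q}{q-1}w^2\bigr)$; chaining and dividing by the hypothesized positive quantity $nd-2nw+\frac{q}{q-1}w^2$ yields $M\le nd/\bigl(nd-2nw+\frac{q}{q-1}w^2\bigr)$ for every such code, hence the bound on $A_q(n,d,w)$. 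One small correction of emphasis: your concern about whether the equality configurations of the inner and outer estimates are simultaneously realizable is immaterial to the lemma as stated. Both estimates are inequalities pointing in the same direction, so the upper bound is valid unconditionally; simultaneous realizability would matter only for proving the bound is attained, which is a tightness claim the lemma does not make.
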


\begin{thms}\label{th:app_gzdb_cwc}
Let $f$ be an $(n,m,S)$ ZD function. Then $\mathcal{C}_f$ in \eqref{eq:define_of_code} is an $(n, n, n-\lambda, n-b_0)_m$ CWC, where $\lambda = \max_{x \in S}{x}$ and $b_0=|f^{-1}(0)|$. Furthermore, $\mathcal{C}_f$ is optimal if and only if $\lambda(n-1)(m-1)=b_0^2m-2b_0n+n(n-m+1)$.
\end{thms}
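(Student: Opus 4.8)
The plan is to reduce the first assertion to the constant composition code structure already established in Proposition~\ref{prop:construct_ccc}, and then to read off optimality directly from the Johnson-type bound in Lemma~\ref{lm:bound_of_CWC}.

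First I would observe that $\mathcal{C}_f$ is a constant weight code almost for free. By Proposition~\ref{prop:construct_ccc}, $\mathcal{C}_f$ is an $(n, n, n-\lambda, [w_0, \ldots, w_{m-1}])_m$ CCC over $B$ with $w_i = |f^{-1}(b_i)|$, where $\lambda=\max_{x\in S}x$. Since a constant composition code is in particular a constant weight code, it only remains to identify the common Hamming weight. Each codeword has the form $(f(a_0 + a_i), \ldots, f(a_{n-1} + a_i))$; as $x = a_j + a_i$ ranges over $A$ (the map $a_j \mapsto a_j + a_i$ is a bijection of the group $A$), the number of zero coordinates equals $|\{x \in A \mid f(x) = 0\}| = b_0$. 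Hence every codeword has weight $n - b_0$, independent of $i$, so $\mathcal{C}_f$ is an $(n, n, n-\lambda, n-b_0)_m$ CWC of size $n$.

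For the optimality criterion I would specialise Lemma~\ref{lm:bound_of_CWC} with $q = m$, $d = n - \lambda$ and $w = n - b_0$. Since $\mathcal{C}_f$ has size $n$, it is optimal exactly when its size meets the bound, i.e. when
$$n = \frac{nd}{nd - 2nw + \frac{m}{m-1}w^2}.$$
Cancelling $n$ and cross-multiplying turns this into $nd - 2nw + \frac{m}{m-1}w^2 = d$, equivalently $(n-1)d = 2nw - \frac{m}{m-1}w^2$. Clearing the factor $m-1$ and substituting $d = n-\lambda$, $w = n - b_0$ gives
$$(m-1)(n-1)(n-\lambda) = 2n(m-1)(n-b_0) - m(n-b_0)^2.$$
Isolating the term carrying $\lambda$ and expanding both sides as polynomials in $n$ and $b_0$, the coefficients of $n^2$, $nb_0$, $n$ and $b_0^2$ collapse to the claimed identity $\lambda(n-1)(m-1) = b_0^2 m - 2 b_0 n + n(n-m+1)$.

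The expansion is entirely routine, so the only genuine point worth checking is that the bound of Lemma~\ref{lm:bound_of_CWC} is actually applicable at equality, that is, that the denominator $nd - 2nw + \frac{m}{m-1}w^2$ is positive. A short calculation shows that at equality this denominator equals $d = n - \lambda$, which is positive in every non-degenerate case (whenever $f$ is not constant), so the bound is valid and the equivalence is genuinely two-sided. I therefore expect no real obstacle beyond carefully tracking signs in the expansion; the statement is simply a clean algebraic rewriting of the bound-meeting condition.
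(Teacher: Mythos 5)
Your proof is correct and follows the route the paper clearly intends: the paper actually states Theorem~\ref{th:app_gzdb_cwc} without any proof, so your argument --- reading the constant weight $n-b_0$ off the CCC structure of Proposition~\ref{prop:construct_ccc} via the bijection $x\mapsto x+a_i$, then setting the code size $n$ equal to the bound of Lemma~\ref{lm:bound_of_CWC} with $q=m$, $d=n-\lambda$, $w=n-b_0$ and expanding --- supplies exactly the missing derivation, and the expansion does collapse to $\lambda(n-1)(m-1)=b_0^2m-2b_0n+n(n-m+1)$ (I checked it). The only quibble is your parenthetical ``whenever $f$ is not constant'': positivity of the denominator at equality requires $\lambda<n$, i.e.\ that $f$ has no nonzero period, which is strictly weaker than non-constancy (e.g.\ $f:\Z_4\to\Z_3$ with $f(x)=1+(x\bmod 2)$ is non-constant, has $\lambda=n=4$, $m=2$, $b_0=0$, and even satisfies the identity). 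In that degenerate case the codewords in \eqref{eq:define_of_code} are not pairwise distinct, so Proposition~\ref{prop:construct_ccc} --- and hence the first assertion of the theorem --- already fails; the statement implicitly assumes $\lambda<n$, and with that assumption made explicit your proof is complete.
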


We have several results for the ZDB functions of Type-A in Proposition~\ref{prop:construct_zdb_on_generic_ring} and the ZD functions of Type-B in Theorem~\ref{th:construct_nzdb_on_generic_group}. Comparing with Theorem 6 in~\cite{yi2018generic}, Theorem~\ref{th:app_cwc1} is generic since it does not depend on the construction method.
\begin{thms}\label{th:app_cwc1}
Let $f$ be an $(n,\frac{n-1}{k}+1,k-1)$ ZDB function of Type-A such that $|f^{-1}(0)|=1$. Then $\mathcal{C}_f$ in \eqref{eq:define_of_code} is an optimal $(n, n, n-k+1, n-1)_{\frac{n-1}{k}+1}$ CWC.
\end{thms}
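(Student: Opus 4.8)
The plan is to specialize Theorem~\ref{th:app_gzdb_cwc} to the hypotheses at hand and then verify its optimality identity by a direct substitution. Since $f$ is an $(n,\frac{n-1}{k}+1,k-1)$ ZDB function, its zero-difference set is the singleton $S=\{k-1\}$, so $\lambda=\max_{x\in S}{x}=k-1$ and $m=\frac{n-1}{k}+1$; the hypothesis $|f^{-1}(0)|=1$ gives $b_0=1$. Feeding these values into Theorem~\ref{th:app_gzdb_cwc} immediately yields that $\mathcal{C}_f$ is an $(n,n,n-\lambda,n-b_0)_m=(n,n,n-k+1,n-1)_{\frac{n-1}{k}+1}$ CWC, which is exactly the claimed parameter set. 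The Type-A assumption together with $|f^{-1}(0)|=1$ is precisely what guarantees $b_0=1$ and hence pins down the weight $n-1$.

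It then remains to check the optimality condition of Theorem~\ref{th:app_gzdb_cwc}, namely $\lambda(n-1)(m-1)=b_0^2m-2b_0n+n(n-m+1)$. First I would substitute $\lambda=k-1$, $b_0=1$ and $m-1=\frac{n-1}{k}$ into the left-hand side to obtain $\frac{(k-1)(n-1)^2}{k}$. For the right-hand side, setting $b_0=1$ collapses it to $m-2n+n(n-m+1)=(n-1)(n-m)$; using the identity $n-m=(n-1)\frac{k-1}{k}$ this too equals $\frac{(k-1)(n-1)^2}{k}$. Since both sides agree, the optimality criterion of Theorem~\ref{th:app_gzdb_cwc} is met and $\mathcal{C}_f$ is optimal.

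There is no genuine obstacle here: the statement is essentially a corollary of Theorem~\ref{th:app_gzdb_cwc}, and the only content is the algebraic verification that the two sides of the optimality identity coincide for these particular parameters. The single point requiring mild care is to translate the ZDB hypothesis correctly into $\lambda=k-1$ (rather than $k$) and to use the integrality relation $m-1=\frac{n-1}{k}$ consistently throughout the simplification, so that the factor $n-m$ is rewritten as $(n-1)\frac{k-1}{k}$ and both sides reduce to the common value $\frac{(k-1)(n-1)^2}{k}$.
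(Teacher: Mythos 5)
Your proof is correct and follows exactly the route the paper intends: Theorem~\ref{th:app_cwc1} is a direct specialization of Theorem~\ref{th:app_gzdb_cwc}, with $\lambda=k-1$, $m=\frac{n-1}{k}+1$, $b_0=1$, and both sides of the optimality identity reducing to $\frac{(k-1)(n-1)^2}{k}$ via $n-m=(n-1)\frac{k-1}{k}$. No gaps; the algebra checks out.
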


\begin{thms}\label{th:app_cwc2}
Let $f$ be an $(n,2,k)$ ZDB function of Type-B in Theorem~\ref{th:construct_nzdb_on_generic_group}. Define $b_0=|f^{-1}(0)|$. Then $\mathcal{C}_f$ in \eqref{eq:define_of_code} is an optimal $(n, n, n-k, n-b_0)_{2}$ CWC where $b_0=k$ or $b_0=k+1$.
\end{thms}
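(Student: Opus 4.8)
The plan is to reduce everything to Theorem~\ref{th:app_gzdb_cwc}, which already supplies both the parameters of $\mathcal{C}_f$ and a clean necessary-and-sufficient algebraic criterion for optimality; the remaining work is to pin down the numerical data $n,m,\lambda,b_0$ attached to an $(n,2,k)$ Type-B ZDB function and then to check that the criterion is satisfied.

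First I would fix the preimage structure. Since $f$ is Type-B with $m=|f(A)|=2$, the definition gives $\SP(f)=\{e+1,e^{m-1}\}=\{e+1,e\}$ for some positive integer $e$, so the two fibres have sizes $e+1$ and $e$ and hence $n=|A|=(e+1)+e=2e+1$. To identify $e$ with $k$ I would invoke the balance: as $f$ is $(n,2,k)$ ZDB we have $S=\{k\}$, and Lemma~\ref{lm:property_A_of_ZD} yields
$$\sum_{b\in B}r_b(r_b-1)=\sum_{\alpha\in A^*}\lambda_\alpha=(n-1)k.$$
Evaluating the left side on the fibre sizes $e+1,e$ gives $(e+1)e+e(e-1)=2e^2$, while $n-1=2e$, so $2e^2=2ek$ and therefore $e=k$. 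Consequently $n=2k+1$ and $\SP(f)=\{k+1,k\}$; assuming $0\in f(A)$ (as is implicit in the statement), its fibre has size $b_0=|f^{-1}(0)|\in\{k,k+1\}$, which already establishes the stated dichotomy for $b_0$.

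Next I would read off the code parameters directly from Theorem~\ref{th:app_gzdb_cwc}: with $\lambda=\max_{x\in S}x=k$ and $m=2$, that theorem says $\mathcal{C}_f$ is an $(n,n,n-k,n-b_0)_2$ CWC, matching the claim. For optimality I would use the criterion provided there, namely
$$\lambda(n-1)(m-1)=b_0^2 m-2b_0 n+n(n-m+1).$$
Substituting $\lambda=k$, $m=2$, $n=2k+1$, the left side becomes $k\cdot 2k=2k^2$, and the right side becomes $2b_0^2-2b_0(2k+1)+2k(2k+1)$. I would then treat the two admissible values of $b_0$ separately: for $b_0=k$ the last two terms cancel, leaving $2k^2$; for $b_0=k+1$ a short expansion of $2(k+1)^2-2(k+1)(2k+1)+2k(2k+1)$ again collapses to $2k^2$. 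In both cases the identity holds, so $\mathcal{C}_f$ is optimal.

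I do not expect a genuine obstacle here. The only substantive step is recognizing that the Type-B spectrum together with $m=2$ forces $n=2k+1$ and confines $b_0$ to exactly the two values $k$ and $k+1$; once $n$ is pinned down the right-hand side of the criterion collapses correctly. The remaining computation is a routine substitution into a formula already established in Theorem~\ref{th:app_gzdb_cwc}, the only mild care being to verify both $b_0=k$ and $b_0=k+1$ so that optimality holds uniformly in that choice.
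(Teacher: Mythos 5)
Your proposal is correct and follows exactly the route the paper intends: the paper states this theorem without a written proof, leaving it as an immediate consequence of Theorem~\ref{th:app_gzdb_cwc} applied to the Type-B structure from Theorem~\ref{th:construct_nzdb_on_generic_group}. Your derivation of $n=2k+1$, the dichotomy $b_0\in\{k,k+1\}$, and the verification that both values satisfy the optimality criterion $\lambda(n-1)(m-1)=b_0^2m-2b_0n+n(n-m+1)$ supplies precisely the computations the paper omits.
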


Note that the construction of code in Proposition~\ref{prop:construct_ccc} does not require the function should be defined over a cyclic group. Since Proposition~\ref{prop:zdb_1_on_fq} and Proposition~\ref{prop:zdb_2_on_fq} give more parameters that Proposition~\ref{prop:zdb_1_on_zn} and Proposition~\ref{prop:zdb_2_on_zn}. Only Proposition~\ref{prop:zdb_1_on_fq} and Proposition~\ref{prop:zdb_2_on_fq} are considered when applying Theorem~\ref{th:app_cwc1} and Theorem~\ref{th:app_cwc2}. Consequently, we obtain many optimal CWCs. Some optimal CWCs are listed in Table~\ref{tb:app_cwc_new}. For more optimal CWCs, please see~\cite{brouwer2019cwc}.

\begin{conc}\label{conc:app_cwc1}
Let $n=p_1^{r_1}p_2^{r_2}\cdots p_k^{r_k}$, where $p_1<p_2<\cdots < p_k$ are odd prime numbers, and $r_1, r_2, \ldots, r_k$ are positive integers. Then for any positive integers $e$ such that $e\mid \gcd(p_1^{r_1}-1, p_2^{r_2}-1, \ldots, p_k^{r_k}-1)$, there exists an optimal $(n, n, n-e+1,n-1)_{\frac{n-1}{e}+1}$ CWC.
\end{conc}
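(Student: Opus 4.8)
The plan is to exhibit one concrete ZDB function with exactly the structure demanded by Theorem~\ref{th:app_cwc1} and then invoke that theorem; no new combinatorial estimate is required, because the optimality criterion has already been packaged for us. In other words, the corollary is just the specialization of Theorem~\ref{th:app_cwc1} (with $k=e$) to the explicit family of Proposition~\ref{prop:zdb_1_on_fq}.

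First I would produce the function. Put $R=\prod_{i=1}^{k}\F_{p_i^{r_i}}$ and take $e$ with $e\mid\gcd(p_1^{r_1}-1,\dots,p_k^{r_k}-1)$; since the odd primes of the corollary are a special case of the primes allowed in Proposition~\ref{prop:zdb_1_on_fq}, that proposition supplies an $(n,\frac{n-1}{e}+1,e-1)$ ZDB function $f$ from $(R,+)$ onto $(\Z_{m},+)$ with $m=\frac{n-1}{e}+1$. All that then remains is to check the two hypotheses of Theorem~\ref{th:app_cwc1}: that $f$ is of Type-A, and that its unique singleton preimage is the preimage of the symbol $0$.

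Second I would verify the Type-A property. The function of Proposition~\ref{prop:zdb_1_on_fq} is the instance $f_G$ of Proposition~\ref{prop:construct_zdb_on_generic_ring} with $|G|=e$, whose preimages are precisely the sets $rG$. The coset $0\cdot G=\{0\}$ is a singleton, whereas every nonzero coset has full size $|G|=e$ (this is exactly what condition \eqref{eq:condition_of_zdb_on_generic_ring} guarantees); as there are $m-1=\frac{n-1}{e}$ nonzero cosets, the multiset of preimage sizes is $\SP(f)=\{1,e^{m-1}\}$, which is the Type-A condition. To secure $|f^{-1}(0)|=1$, I would use the freedom in the bijection $h_G$ of Proposition~\ref{prop:construct_zdb_on_generic_ring}: choosing $h_G(\{0\})=0$ (equivalently, relabelling the symbols of $\Z_{m}$, which alters neither the ZDB nor the Type-A property) makes the singleton coset the preimage of $0$, so $b_0:=|f^{-1}(0)|=1$.

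With both hypotheses in hand, Theorem~\ref{th:app_cwc1} applied with $k=e$ gives that $\mathcal{C}_f$ of \eqref{eq:define_of_code} is an optimal $(n,n,n-e+1,n-1)_{\frac{n-1}{e}+1}$ CWC, which is the assertion. I do not anticipate a genuine obstacle here; the only point that actually needs care is the bookkeeping that identifies the unique singleton preimage with the zero symbol, so that the constant weight reads $n-b_0=n-1$ in the weight formula of Theorem~\ref{th:app_gzdb_cwc}. A careless choice of the codomain labelling would make the weight come out as $n-e$ rather than $n-1$ and would thus fail to match the claimed parameters, so pinning down $b_0=1$ is the one step I would write out explicitly.
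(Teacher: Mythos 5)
Your proposal is correct and follows exactly the route the paper intends: the corollary is the specialization of Theorem~\ref{th:app_cwc1} (with $k=e$) to the ZDB functions of Proposition~\ref{prop:zdb_1_on_fq}, which are Type-A with singleton preimage $\{0\}$. Your explicit check that the bijection $h_G$ can be chosen so that $f^{-1}(0)=\{0\}$, making $b_0=1$ and the weight $n-1$, is a detail the paper leaves implicit but is handled correctly.
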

\begin{exam}
Let $n=11$ and $e=5$. Then an optimal $(11 , 11 , 7 , 10)_3$ CWC $\mathcal{C}_1$ is obtained from an $(11, 3, 4)$ ZDB function by Corollary~\ref{conc:app_cwc1}, namely, \begin{align*}\begin{autobreak}\mathcal{C}_1=\{
12221212101,
21222120111,
22210111122,
22102211211,
12021121212,
21121112220,
12112122021,
20111222112,
11122201122,
01211221221,
11212012212
\}.\end{autobreak}\end{align*}
\end{exam}

\begin{conc}\label{conc:app_cwc2}
Let $n=p_1^{r_1}p_2^{r_2}\cdots p_k^{r_k}$, where $p_1<p_2<\cdots < p_k$ are prime numbers, and $r_1, r_2, \ldots, r_k$ are positive integers. Denote $R=\prod_{i=1}^{k}\F_{p_i^{r_i}}$. Then for any positive integer $e$ such that $e(e-1)\mid \gcd(p_1^{r_1}-1, p_2^{r_2}-1, \ldots, p_k^{r_k}-1)$, there exists an optimal $(en, en, en-e+2,en-1)_{\frac{en-1}{e-1}+1}$ CWC.
\end{conc}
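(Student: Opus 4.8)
The plan is to obtain Corollary~\ref{conc:app_cwc2} as a specialization of the generic optimality result Theorem~\ref{th:app_cwc1}, fed by the explicit ZDB family of Proposition~\ref{prop:zdb_2_on_fq}; no fresh optimality inequality needs to be re-derived. First I would invoke Proposition~\ref{prop:zdb_2_on_fq}: whenever $e(e-1)\mid\gcd(p_1^{r_1}-1,\ldots,p_k^{r_k}-1)$ it produces an $(en,\frac{en-1}{e-1}+1,e-2)$ ZDB function $f$ from $(R\times\Z_e,+)$ onto $\Z_{\frac{en-1}{e-1}+1}$. The decisive bookkeeping is to write $N=en$ for the order of the domain and to read off the common block size as $e-1$, so that $f$ is exactly an $(N,\frac{N-1}{e-1}+1,(e-1)-1)$ ZDB function, i.e.\ the $k=e-1$ instance of the input required by Theorem~\ref{th:app_cwc1}.

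Next I would check the two structural hypotheses of Theorem~\ref{th:app_cwc1}. For the Type-A condition, I would note that the fibres of $f$ are the multiplicative cosets underlying the construction in~\cite{yi2019note}: one distinguished fibre has size $1$ and the remaining $m-1$ fibres all have size $e-1$, where $m=\frac{en-1}{e-1}+1$, so $\SP(f)=\{1,(e-1)^{m-1}\}$, which is the Type-A profile with parameter $e-1$. For the condition $|f^{-1}(0)|=1$, I would exploit the freedom in the bijection labelling the cosets by $\Z_{m}$: by relabelling so that the unique size-$1$ fibre receives the symbol $0$, I force $|f^{-1}(0)|=1$. Such a relabelling renames fibres only, hence leaves every zero-difference count $\lambda_\alpha$, and therefore the whole set $S$ and all pairwise Hamming distances, unchanged; it merely fixes which fibre is counted as the zero symbol, and thereby pins the codeword weight to $N-|f^{-1}(0)|=en-1$.

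Finally I would apply Theorem~\ref{th:app_cwc1} to this $f$ with $k=e-1$ and substitute $N=en$. Theorem~\ref{th:app_cwc1} certifies that $\mathcal{C}_f$ is an optimal $(N,N,N-k+1,N-1)_{\frac{N-1}{k}+1}$ CWC, which becomes the optimal $(en,en,en-e+2,en-1)_{\frac{en-1}{e-1}+1}$ CWC of the statement once $N=en$ and $k=e-1$ are inserted (note $N-k+1=en-e+2$ and $\frac{N-1}{k}+1=\frac{en-1}{e-1}+1$). I expect the only real obstacle to be the middle step: one must be certain that the family of Proposition~\ref{prop:zdb_2_on_fq} genuinely has a single size-$1$ fibre together with large fibres of size exactly $e-1$, since an error in identifying this block size would corrupt both the minimum distance $en-e+2$ and the alphabet size; once that identification is secured, optimality transfers verbatim from Theorem~\ref{th:app_cwc1}.
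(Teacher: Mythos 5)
Your proposal is correct and follows essentially the same route as the paper: the paper obtains Corollary~\ref{conc:app_cwc2} precisely by feeding the Type-A ZDB functions of Proposition~\ref{prop:zdb_2_on_fq} (whose Type-A profile $\SP(f)=\{1,(e-1)^{m-1}\}$ the paper cites from~\cite{yi2019note}) into the generic Theorem~\ref{th:app_cwc1} with $k=e-1$, exactly as you do, including the harmless relabelling that places the unique size-$1$ fibre at the symbol $0$. Your parameter bookkeeping ($N=en$, $N-k+1=en-e+2$, $\frac{N-1}{k}+1=\frac{en-1}{e-1}+1$) matches the paper's stated parameters, so nothing further is needed.
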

\begin{exam}
Let $n=7$ and $e=3$. Then an optimal $(21, 21, 20, 20)_{11}$ CWC $\mathcal{C}_2$ is obtained from an $(21, 11, 1)$ ZDB function by Corollary~\ref{conc:app_cwc2}, namely, \begin{align*}\begin{autobreak}\mathcal{C}_2=\{
2304467A876319A158952,
3443078759A26895611A2,
042235A89193856A76714,
43232A91158456779A860,
40324859A8727116A5693,
675A89801449736232A51,
78A95814530192A342766,
A7819047386545291A326,
8591A15362468274039A7,
7915843825760A316942A,
6A9874064711382A95235,
323429156610AA8987574,
16857794803AA45223619,
985613252A8A494671037,
A96716A27328548430195,
15A7623941A9264857308,
5679A34106982735A8241,
816A522A3957310786449,
91786A73942560132458A,
5A169562A237139044878,
224031667A54975819A83
\}.\end{autobreak}\end{align*}
\end{exam}

\begin{conc}\label{conc:app_cwc3}
Let $n=p_1^{r_1}p_2^{r_2}\cdots p_k^{r_k}$, where $p_1<p_2<\cdots < p_k$ are odd prime numbers, and $r_1, r_2, \ldots, r_k$ are positive integers. Then for any odd and positive integers $e$ such that $e\mid \gcd(p_1^{r_1}-1, p_2^{r_2}-1, \ldots, p_k^{r_k}-1)$, there exists an optimal $(n, n, \frac{n+1}{2},\frac{n-1}{2})_{2}$ CWC and an optimal $(n, n, \frac{n+1}{2},\frac{n+1}{2})_{2}$ CWC.
\end{conc}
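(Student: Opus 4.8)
The plan is to realize both codes as a single application of Theorem~\ref{th:app_cwc2}, so the entire task reduces to exhibiting one binary Type-B ZD function with the correct parameters and then reading off the two admissible values of $b_0$. First I would note that a binary code ($q=2$) forces the image size of the underlying ZD function to be $2$. Since the change-point construction lowers the image size of a Type-A ZDB function by exactly one and turns Type-A into Type-B, I must start from a Type-A ZDB function of image size $3$. By Proposition~\ref{prop:zdb_1_on_fq} such a function on $R=\prod_{i}\F_{p_i^{r_i}}$ exists precisely when the multiplicative subgroup $G$ has order $k=\frac{n-1}{2}$, i.e. when $e=\frac{n-1}{2}$ divides $\gcd(p_1^{r_1}-1,\ldots,p_k^{r_k}-1)$. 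Thus the applicable value is $e=\frac{n-1}{2}$, which is odd and divides the $\gcd$ by hypothesis, and this pins $e$ down uniquely.

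Next I would spend the oddness hypothesis. Because the characteristic is odd, Lemma~\ref{lm:about_p2} gives $-1\in G$ if and only if $2\mid|G|$; since $|G|=e=\frac{n-1}{2}$ is odd, we obtain $-1\notin G$. Feeding this into Theorem~\ref{th:construct_nzdb_on_generic_ring} (equivalently, the $a_0=0$, $a=1$ specialization of Theorem~\ref{th:construct_nzdb_on_generic_group}) in the regime $\frac{n-1}{k}=2$ with $-1\notin G$ lands in the branch $S=\{k\}$. Hence $f_G^0$ is an $(n,2,k)$ Type-B ZD function with $k=\frac{n-1}{2}$ and $\SP(f_G^0)=\{k+1,k\}$, which is exactly the hypothesis of Theorem~\ref{th:app_cwc2}. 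The oddness is what makes this work: for even $e$ one would instead fall into the branch $S=\{k-1,k+1\}$, a two-element set that does not feed Theorem~\ref{th:app_cwc2}.

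Finally I would invoke Theorem~\ref{th:app_cwc2} with $k=\frac{n-1}{2}$, which directly asserts that $\mathcal{C}_{f_G^0}$ is an optimal $(n,n,n-k,n-b_0)_2$ CWC with $b_0\in\{k,k+1\}$. Since $n-k=\frac{n+1}{2}$ and $n-(k+1)=\frac{n-1}{2}$, the value $b_0=k$ yields the optimal $(n,n,\frac{n+1}{2},\frac{n+1}{2})_2$ code and $b_0=k+1$ yields the optimal $(n,n,\frac{n+1}{2},\frac{n-1}{2})_2$ code. Both values of $b_0$ are genuinely realizable because the element of the binary codomain labelled $0$ may be chosen, via the bijection $h_G$ of Proposition~\ref{prop:construct_zdb_on_generic_ring}, to be either the size-$(k+1)$ preimage class or the size-$k$ class.

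I expect the only real obstacle to be the case bookkeeping that guarantees $S=\{k\}$ rather than a two-element set: one must simultaneously force the image size to equal $2$ (which fixes $e=\frac{n-1}{2}$) and exclude $-1$ from $G$ (which is exactly where the oddness of $e$ is consumed). Once these two points are secured, optimality is inherited verbatim from Theorem~\ref{th:app_cwc2}, so no fresh application of the bound in Lemma~\ref{lm:bound_of_CWC} is required.
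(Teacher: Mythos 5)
Your proposal is correct and follows essentially the same route as the paper: the corollary is precisely Theorem~\ref{th:app_cwc2} applied to the $(n,2,\tfrac{n-1}{2})$ Type-B ZDB function obtained by the change-point construction (Theorem~\ref{th:construct_nzdb_on_generic_ring}, equivalently Theorem~\ref{th:construct_nzdb_on_generic_group}) from the Type-A function of Proposition~\ref{prop:zdb_1_on_fq} with $e=\tfrac{n-1}{2}$, where oddness of $e$ combined with Lemma~\ref{lm:about_p2} forces $-1\notin G$ and hence the branch $S=\{k\}$. Your two supporting observations --- that the stated code parameters pin $e$ down to $\tfrac{n-1}{2}$, and that both weights $\tfrac{n+1}{2}$ and $\tfrac{n-1}{2}$ are realized by relabelling the codomain via the bijection $h_G$ --- are exactly the implicit bookkeeping the paper's derivation relies on.
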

\begin{exam}
Let $n=11$ and $e=5$. Then an optimal $(11 , 11 , 6 , 5)_{2}$ CWC $\mathcal{C}_3$  and an optimal $(11 , 11 , 6 , 6)_{2}$ CWC $\mathcal{C}_4$ are obtained from an $(11, 2, 5)$ ZDB function by Corollary~\ref{conc:app_cwc3}, namely, \begin{align*}\begin{autobreak}\mathcal{C}_3=\{
01110101000,
10111010000,
11100000011,
11001100100,
01010010101,
10010001110,
01001011010,
10000111001,
00011100011,
00100110110,
00101001101
\}, \end{autobreak} \\ \begin{autobreak} \mathcal{C}_4=\{
01110101010,
10111011000,
11101000011,
11011100100,
01110010101,
10010001111,
01001011110,
11000111001,
00011110011,
10100110110,
00101101101
\}.\end{autobreak}\end{align*}
\end{exam}


\begin{table}[!htbp]
\centering
\caption{\text{Some Optimal $(n,n,d,w)_2$ Constant Weight Codes}}\label{tb:app_cwc_new}
\begin{tabular}{|c|c|c|c||c|c|c|c|}
\hline
$n$ & $d$ & $w$ & $A_2(n, d, w)$ & $n$ & $d$ & $w$ & $A_2(n, d, w)$ \\
\hline
5 & 4 & 4 & 5 & 5 & 4 & 5 & 5 \\
7 & 4 & 4 & 7 & 7 & 4 & 5 & 7 \\
7 & 6 & 6 & 7 & 7 & 6 & 7 & 7 \\
9 & 8 & 8 & 9 & 9 & 8 & 9 & 9 \\
11 & 6 & 6 & 11 & 11 & 6 & 7 & 11 \\
11 & 10 & 10 & 11 & 11 & 10 & 11 & 11 \\
13 & 10 & 10 & 13 & 13 & 10 & 11 & 13 \\
13 & 12 & 12 & 13 & 13 & 12 & 13 & 13 \\
15 & 14 & 14 & 15 & 15 & 14 & 15 & 15 \\
17 & 16 & 16 & 17 & 17 & 16 & 17 & 17 \\
19 & 10 & 10 & 19 & 19 & 10 & 11 & 19 \\
19 & 16 & 16 & 19 & 19 & 16 & 17 & 19 \\
19 & 18 & 18 & 19 & 19 & 18 & 19 & 19 \\
23 & 12 & 12 & 23 & 23 & 12 & 13 & 23 \\
27 & 14 & 14 & 27 & 27 & 14 & 15 & 27 \\
31 & 16 & 16 & 31 & 31 & 16 & 17 & 31 \\
\hline
\end{tabular}
\end{table}

\subsection{Difference Systems of Sets}
Difference systems of sets (DSS) are related with comma-free codes, authentication codes and secrete sharing schemes~\cite{Ogata2004New,fuji2009perfect}. Let $\{D_0, D_1, \ldots, D_{q-1}\} $ be disjoint subsets of an Abelian group $(G, +)$. Denote $|G|=n$ and $|D_i|=w_i$ for every $i$. Then $\{D_0, D_1, \ldots, D_{q-1}\} $ is said to be an $(n,\{w_0, w_1, \ldots, w_{q-1}\}, \lambda)$ DSS if the multi-set
$$\{x-y \mid \begin{array}{c}
x\in D_i, y\in D_j, 0\le i\ne j \le q-1
\end{array}
\}$$
contains every nonzero element $g \in G$ at least $\lambda$ times. Moreover, a DSS is perfect if every non-zero element $g$ appears exactly $\lambda$ times in the multi-set just mentioned above. It is required that
$$\tau_q(n,\lambda)=\sum_{i=0}^{q-1}\left|D_i\right|$$
as small as possible. A DSS is called optimal if the bound in Lemma~\ref{lm:bound_of_DSS} is met.
\begin{lemma}~\cite{wang2006new}\label{lm:bound_of_DSS}
For an $(n,[w_0, w_1, \ldots, w_{q-1}], \lambda)$ DSS, we have
$$\begin{array}{c}
\tau_q(n,\lambda) \ge \sqrt{SQUARE(\lambda(n-1) + \left\lceil \frac{\lambda(n-1)}{q-1}\right\rceil)}
\end{array},$$
where $SQUARE(x)$ denotes the smallest square number that is no less than $x$ and $\lceil x \rceil$ denotes the smallest integer that no less that $x$.
\end{lemma}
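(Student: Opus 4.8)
The plan is to prove the bound by a double counting of the inter-block (external) differences, followed by an integrality refinement. Write $\tau = \tau_q(n,\lambda) = \sum_{i=0}^{q-1} w_i$ for brevity, and consider the multi-set
$$M = \{x - y \mid x \in D_i,\ y \in D_j,\ 0 \le i \ne j \le q-1\}.$$
First I would count $|M|$ with multiplicity: it is the number of ordered pairs $(x,y)$ with $x\in D_i$, $y\in D_j$, $i\ne j$, namely
$$|M| = \sum_{0 \le i \ne j \le q-1} w_i w_j = \Big(\sum_{i=0}^{q-1} w_i\Big)^2 - \sum_{i=0}^{q-1} w_i^2 = \tau^2 - \sum_{i=0}^{q-1} w_i^2.$$
On the other hand, the blocks are pairwise disjoint, so no difference equals $0$, and the DSS hypothesis forces each of the $n-1$ nonzero elements of $G$ to occur in $M$ at least $\lambda$ times, whence $|M| \ge \lambda(n-1)$. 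Combining the two gives the central inequality
$$\tau^2 - \sum_{i=0}^{q-1} w_i^2 \ge \lambda(n-1).$$

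Next I would bound $\sum_i w_i^2$ from below by Cauchy--Schwarz. Since there are $q$ blocks summing to $\tau$,
$$\sum_{i=0}^{q-1} w_i^2 \ge \frac{1}{q}\Big(\sum_{i=0}^{q-1} w_i\Big)^2 = \frac{\tau^2}{q}.$$
Substituting into the central inequality and rearranging yields the real-valued estimate
$$\tau^2\Big(1 - \frac{1}{q}\Big) \ge \lambda(n-1), \qquad\text{i.e.}\qquad \tau^2 \ge \lambda(n-1) + \frac{\lambda(n-1)}{q-1}.$$

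Finally I would upgrade this to the stated integer bound using two successive roundings. Because each $w_i$ is an integer, both $\tau$ and $\tau^2 - \lambda(n-1)$ are integers; as the latter is at least $\frac{\lambda(n-1)}{q-1}$, it must in fact be at least $\lceil \frac{\lambda(n-1)}{q-1}\rceil$, giving $\tau^2 \ge \lambda(n-1) + \lceil \frac{\lambda(n-1)}{q-1}\rceil$. Since $\tau$ is a positive integer, $\tau^2$ is a perfect square dominating the right-hand side, hence $\tau^2 \ge SQUARE\big(\lambda(n-1) + \lceil \frac{\lambda(n-1)}{q-1}\rceil\big)$, the least perfect square no smaller than that value; taking square roots gives $\tau \ge \sqrt{SQUARE(\lambda(n-1)+\lceil\frac{\lambda(n-1)}{q-1}\rceil)}$, as claimed. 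The step I would be most careful about is precisely this passage from the continuous Cauchy--Schwarz estimate through the ceiling and then the $SQUARE$ rounding, since that is where the stated bound genuinely improves on the naive real-valued one; everything preceding it is routine double counting.
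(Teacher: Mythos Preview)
The paper does not actually prove this lemma: it is quoted from \cite{wang2006new} as a known bound and used as a black box. Your proposed argument is correct and is essentially the standard proof of this inequality (double counting external differences, Cauchy--Schwarz on the block sizes, then the two integrality roundings), so there is nothing to compare against in the paper itself.
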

\begin{remark}
DSSs on non-cyclic groups are related to authentication codes and secret sharing schemes~\cite{Ogata2004New, fuji2009perfect}.
\end{remark}
In \citeyear{ding2009optimal}, \citeauthor{ding2009optimal} gave a method to construct optimal DSSs from a ZDB function.
\begin{prop}\cite{ding2009optimal}\label{prop:construct_dss}
If $f$ is an $(n, m, \lambda)$ ZDB function from $A$ to $B$. Denote $f(A)=\{b_0, b_1, \ldots, b_{m-1}\}$. Then
\begin{equation}\label{eq:define_of_dss}
\mathcal{D}=\{ D_i \mid 0 \le i \le q - 1\}
\end{equation}
is an $(n,[w_0, w_1, \ldots, w_{m-1}], n - \lambda)$ DSS if $n\ge m\lambda$, where $D_i=\{x \in A \mid f(x)=b_i\}$, $w_i=|D_i|$.
\end{prop}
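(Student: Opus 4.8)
The plan is to prove the stronger statement that $\mathcal{D}$ is a \emph{perfect} difference system of sets, namely that every nonzero $g\in A$ is represented exactly $n-\lambda$ times as a difference $x-y$ with $x$ and $y$ lying in distinct blocks; the weaker ``at least $n-\lambda$ times'' demanded by the definition then follows a fortiori. First I would record the structural facts built into the setup: since $b_0,\dots,b_{m-1}$ are precisely the distinct values taken by $f$, the blocks $D_i=f^{-1}(b_i)$ are nonempty, pairwise disjoint, and partition $A$, so in particular $\sum_{i=0}^{m-1}w_i=n$ and we may take $q=m$.

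The core of the argument is a version of the count in Lemma~\ref{lm:property_A_of_ZD} localized at a single group element. Fix a nonzero $g\in A$ and count the ordered pairs $(x,y)\in A\times A$ with $x-y=g$. On the one hand, $y$ may be chosen arbitrarily and then $x=y+g$ is forced, so there are exactly $n$ such pairs. On the other hand, such a pair lies inside a single block exactly when $f(y+g)=f(y)$, and by the zero-difference-balanced hypothesis the number of $y\in A$ with $f(y+g)=f(y)$ equals $\lambda_g=\lambda$, the same for every $g$. Subtracting, exactly $n-\lambda$ of the $n$ pairs satisfy $x\in D_i$, $y\in D_j$ with $i\ne j$, and each such pair contributes the value $g$ once to the defining multiset of $\mathcal{D}$. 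Hence every nonzero $g$ occurs with multiplicity exactly $n-\lambda$, which is precisely the assertion that $\mathcal{D}$ is an $(n,[w_0,\dots,w_{m-1}],n-\lambda)$ DSS. This step uses only the ZDB property and requires no additional hypothesis.

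It remains to explain the hypothesis $n\ge m\lambda$. It plays no role in the counting above; its purpose is to guarantee that the declared parameter $n-\lambda$ is positive (equivalently that $f$ is non-constant, since $\lambda=n$ would force $m=1$) and, in the optimality discussion attached to this construction, that the resulting DSS meets the lower bound of Lemma~\ref{lm:bound_of_DSS}. Concretely one has $q=m$ and $\tau_q(n,n-\lambda)=\sum_i w_i=n$, while $\sum_i w_i^2=n+(n-1)\lambda$ by Lemma~\ref{lm:property_A_of_ZD}; the task is then to verify that $n^2=SQUARE\bigl((n-\lambda)(n-1)+\lceil (n-\lambda)(n-1)/(m-1)\rceil\bigr)$, with $n\ge m\lambda$ used to control the ceiling and the ``smallest enclosing square'' operations. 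I expect the double count itself to be entirely routine, and the only genuinely delicate point to be reconciling the exact total $\tau=n$ with the two rounding operations appearing in the bound.
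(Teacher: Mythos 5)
The paper gives no proof of Proposition~\ref{prop:construct_dss}: it is quoted from \cite{ding2009optimal}, so there is no internal argument to compare against. Your proof is correct, and it is the standard one: fixing a nonzero $g$ and splitting the $n$ ordered pairs $(x,y)$ with $x-y=g$ according to whether $f(y+g)=f(y)$ (which, by the ZDB property, happens for exactly $\lambda$ values of $y$) shows that $g$ is represented exactly $n-\lambda$ times by differences across distinct blocks. This is precisely the single-element localization of the global count in Lemma~\ref{lm:property_A_of_ZD}, and it yields the stronger conclusion that $\mathcal{D}$ is a \emph{perfect} DSS, with no hypothesis beyond the ZDB property --- consistent with the paper's later Theorem~\ref{th:app_dss1}, which invokes perfection for ZDB inputs.

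One correction to your closing discussion of the hypothesis $n\ge m\lambda$: it does not serve to make $n-\lambda$ positive. If $m\ge 2$, choose $x_1,x_2$ with $f(x_1)\ne f(x_2)$ and set $a=x_2-x_1$; then $x_1$ is not a solution of $f(x+a)=f(x)$, so $\lambda\le n-1$ and $n-\lambda\ge 1$ automatically. Conversely, in the degenerate case $m=1$ (constant $f$) one has $\lambda=n$, so $n\ge m\lambda$ holds with equality and yet $n-\lambda=0$; the hypothesis excludes nothing here. Its actual role is the second one you identify: it is Ding's sufficient condition for \emph{optimality} of the resulting DSS (see also \cite[Lemma 6]{zhou2012some}), and indeed the paper's Theorem~\ref{th:app_gzdb_dss} and the remark following it show that for $m\ge 2$ the condition $n\ge m\lambda$ implies the exact optimality criterion $n\ge m\lambda-m+2$. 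Since the proposition as stated asserts only DSS-ness, your counting argument alone proves it in full.
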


To obtain optimal DSSs, we need the following lemmas to prove Theorem~\ref{th:app_gzdb_dss}.
\begin{lemma}\label{lm:about_ceiling_A}
Let $a$ be a positive integer, and let $b$ be a real number. $\lceil x \rceil$ denotes the ceiling function. Then $a<\lceil b\rceil$, if and only if, $a < b$.
\end{lemma}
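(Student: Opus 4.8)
The plan is to prove the two implications of the biconditional separately, relying on only two elementary properties of the ceiling function, namely that $b \le \lceil b \rceil$ and that $\lceil b \rceil < b + 1$ (equivalently $\lceil b \rceil - 1 < b$), together with the standing hypothesis that $a$ is an integer.

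First I would dispatch the easy direction. Assuming $a < b$, the inequality $b \le \lceil b \rceil$ (which is precisely the defining property that $\lceil b \rceil$ dominates its argument) gives $a < b \le \lceil b \rceil$, hence $a < \lceil b \rceil$. Note that this direction uses neither the integrality of $a$ nor the upper bound on $\lceil b \rceil$; it is purely the monotone estimate $b \le \lceil b \rceil$.

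For the converse, suppose $a < \lceil b \rceil$. Here I would invoke the integrality of $a$: since both $a$ and $\lceil b \rceil$ are integers, the strict inequality $a < \lceil b \rceil$ is equivalent to $a \le \lceil b \rceil - 1$. Combining this with $\lceil b \rceil - 1 < b$ (a restatement of the second defining property $\lceil b \rceil < b + 1$), I obtain $a \le \lceil b \rceil - 1 < b$, so $a < b$, which completes the biconditional.

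There is essentially no hard part in this lemma; it is a routine fact about the ceiling function, isolated here for later use in the proof of Theorem~\ref{th:app_gzdb_dss}. The only point demanding a moment's care is the forward direction, where the strict inequality between the integer $a$ and the integer $\lceil b \rceil$ must first be sharpened to $a \le \lceil b \rceil - 1$ before the bound $\lceil b \rceil - 1 < b$ can be applied. Without the integrality of $a$ the implication genuinely fails: taking $a = b$ with $b$ non-integer gives $a < \lceil b \rceil$ yet not $a < b$, so the hypothesis that $a$ is an integer is indispensable rather than cosmetic.
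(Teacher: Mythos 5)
Your proof is correct and follows essentially the same route as the paper: both arguments hinge on using the integrality of $a$ to sharpen $a<\lceil b\rceil$ to $a\le\lceil b\rceil-1$, combined with the basic bounds $b\le\lceil b\rceil<b+1$ (which the paper encodes by writing $\lceil b\rceil=b+\varepsilon$ with $0\le\varepsilon<1$). Your split into two separate implications, and the remark that integrality is genuinely needed, are fine presentational refinements of the same argument.
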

\begin{proof}
For some $0\le\varepsilon < 1$, we have
\begin{equation*}\begin{split}
a < \lceil b\rceil & \Leftrightarrow  a \le \lceil b\rceil - 1 \Leftrightarrow  a \le b + \varepsilon -1 \\
& \Leftrightarrow  a \le b - (1 - \varepsilon)   \Leftrightarrow  a < b.
\end{split}\end{equation*}
\end{proof}

\begin{lemma}\label{lm:about_ceiling_B}
Let $a$ be a positive integer, and let $b$ be a real number. $\lceil x \rceil$ denotes the ceiling function. Then $a \ge \lceil b\rceil$, if and only if, $a \ge b$.
\end{lemma}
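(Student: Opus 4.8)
The plan is to obtain Lemma~\ref{lm:about_ceiling_B} immediately from Lemma~\ref{lm:about_ceiling_A}, which has just been proved, by negating both sides of its biconditional. Indeed, for a positive integer $a$ and a real number $b$, the relation $a \ge \lceil b\rceil$ is exactly the negation of $a < \lceil b\rceil$, and likewise $a \ge b$ is exactly the negation of $a < b$. Since Lemma~\ref{lm:about_ceiling_A} asserts that $a < \lceil b\rceil \Leftrightarrow a < b$, complementing both sides yields $a \ge \lceil b\rceil \Leftrightarrow a \ge b$, which is precisely the statement to be shown.

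Should a self-contained argument be preferred, I would instead mimic the proof of Lemma~\ref{lm:about_ceiling_A} directly. Write $\lceil b\rceil = b + \varepsilon$ with $0 \le \varepsilon < 1$. For the forward implication, $a \ge \lceil b\rceil = b + \varepsilon \ge b$, so $a \ge b$. For the reverse implication, if $a \ge b$ then $a$ is an integer lying at or above $b$; because $\lceil b\rceil$ is by definition the least integer that is not smaller than $b$, this forces $a \ge \lceil b\rceil$.

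I expect no genuine obstacle. The single point that must be handled with care is that the reverse direction relies essentially on $a$ being an integer: a merely real $a$ satisfying $a \ge b$ need not satisfy $a \ge \lceil b\rceil$. The hypothesis that $a$ is a (positive) integer is therefore invoked exactly once, in the reverse implication, where it guarantees that $a$ is among the integers no smaller than $b$; the positivity itself plays no role and is present only to match the setting in which the lemma is later applied.
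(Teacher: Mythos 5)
Your proposal is correct and matches the paper's intent: the paper states Lemma~\ref{lm:about_ceiling_B} without proof, treating it as an immediate consequence of Lemma~\ref{lm:about_ceiling_A}, and your primary argument (negating both sides of that biconditional) is exactly this step, with the logical equivalence spelled out cleanly. Your backup self-contained argument, and your observation that integrality of $a$ is what makes the reverse implication work, are also accurate.
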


\begin{thms}\label{th:app_gzdb_dss}
In Proposition~\ref{prop:construct_dss}, the DSS $\mathcal{D}$ is optimal, if and only if $n \ge m\lambda-m+2$.
\end{thms}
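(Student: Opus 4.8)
The plan is to test the total block size of the construction against the lower bound of Lemma~\ref{lm:bound_of_DSS}. First I would observe that the blocks $D_i=f^{-1}(b_i)$ are exactly the nonempty preimages of the $m$ image values, so they partition $A$; hence $\tau=\sum_{i=0}^{m-1}w_i=|A|=n$. Since $\mathcal{D}$ is an $(n,[w_0,\dots,w_{m-1}],n-\lambda)$ DSS with $q=m$ blocks, Lemma~\ref{lm:bound_of_DSS} reads $\tau\ge\sqrt{SQUARE(X)}$, where $X=(n-\lambda)(n-1)+\lceil(n-\lambda)(n-1)/(m-1)\rceil$. Because the value $\tau=n$ is forced by the construction, $\mathcal{D}$ is optimal precisely when $n=\sqrt{SQUARE(X)}$, i.e.\ when $SQUARE(X)=n^2$.

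Next I would peel off the two roundings one at a time, starting with $SQUARE(\cdot)$. Because $\mathcal{D}$ is a genuine DSS, Lemma~\ref{lm:bound_of_DSS} already guarantees $n\ge\sqrt{SQUARE(X)}$, so $X\le SQUARE(X)\le n^2$ holds for free. As $SQUARE(X)$ is by definition the smallest perfect square that is at least $X$, and since $X\le n^2$, the equality $SQUARE(X)=n^2$ is equivalent to the single strict inequality $(n-1)^2<X$ (otherwise the smaller square $(n-1)^2$ would already dominate $X$). Thus optimality reduces to $(n-1)^2<X$.

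Finally I would simplify $(n-1)^2<X$. Subtracting $(n-\lambda)(n-1)$ from both sides and using $(n-1)^2-(n-\lambda)(n-1)=(n-1)(\lambda-1)$, the condition becomes $(n-1)(\lambda-1)<\lceil(n-\lambda)(n-1)/(m-1)\rceil$. For $\lambda\ge2$ the left-hand side is a positive integer, so Lemma~\ref{lm:about_ceiling_A} strips the ceiling and yields the equivalent $(n-1)(\lambda-1)<(n-\lambda)(n-1)/(m-1)$; cancelling the positive factor $n-1$ and clearing the denominator $m-1$ gives $(\lambda-1)(m-1)<n-\lambda$, i.e.\ $m\lambda-m+1<n$, which for integers is exactly $n\ge m\lambda-m+2$. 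The boundary values $\lambda\in\{0,1\}$, where $(n-1)(\lambda-1)$ fails to be a positive integer, I would dispatch directly: in both cases $(n-1)^2<X$ and $n\ge m\lambda-m+2$ hold for every admissible $n$, so the equivalence persists.

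The genuinely delicate point is not the algebra but the bookkeeping of the nested roundings: one must argue cleanly that optimality collapses to the tidy inequality $(n-1)^2<X$—which rests on the observation that $X\le n^2$ comes for free from $\mathcal{D}$ being a DSS—and then invoke Lemma~\ref{lm:about_ceiling_A} only after verifying its hypothesis, which is precisely what forces the separate (and trivial) handling of $\lambda\le1$.
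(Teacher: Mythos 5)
Your proof is correct and follows essentially the same route as the paper's: reduce optimality to the single inequality $(n-1)^2 < (n-\lambda)(n-1)+\lceil (n-\lambda)(n-1)/(m-1)\rceil$ via the bound in Lemma~\ref{lm:bound_of_DSS} with $\tau=n$, then remove the ceiling with Lemma~\ref{lm:about_ceiling_A} and simplify to $n\ge m\lambda-m+2$. The only difference is bookkeeping: you organize it as one chain of equivalences rather than two separate directions, and you explicitly dispatch the cases $\lambda\in\{0,1\}$, where the positive-integer hypothesis of Lemma~\ref{lm:about_ceiling_A} fails --- a detail the paper's converse silently glosses over but which, as you note, is trivially consistent with the stated equivalence.
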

\begin{proof}
Note that $m\ge 2$. If $n \ge m\lambda-m+2$, then $n-1 > m(\lambda -1)$. That is $\frac{n-\lambda}{m-1} > \lambda -1$. Hence, we have
\begin{equation*}\begin{split}
& \rho(n-1) + \left\lceil \frac{\rho(n-1)}{q-1}\right\rceil \\
\ge& (n-\lambda)(n-1)+\frac{(n-\lambda)(n-1)}{m-1} \\
=& (n-1)\left(n-\lambda+\frac{(n-\lambda)(n-1)}{m-1}\right) \\
>& (n-1)(n-\lambda+\lambda-1) \\
=&(n-1)^2,
\end{split}\end{equation*}
where $q=m$ and $\rho=n-\lambda$. Since $\tau_q(n,\rho)=n$, it follows from Lemma~\ref{lm:bound_of_DSS} that
$$\sqrt{SQUARE\left(\rho(n-1) + \left\lceil \frac{\rho(n-1)}{q-1}\right\rceil\right)}=n.$$
Therefore, $D$ meets the bound in Lemma~\ref{lm:bound_of_DSS}.

Conversely, if $D$ is optimal, then we have
$$\sqrt{SQUARE\left((n-\lambda)(n-1) + \left\lceil \frac{(n-\lambda)(n-1)}{m-1}\right\rceil\right)}=n.$$
That is
$$(n-\lambda)(n-1) + \left\lceil \frac{(n-\lambda)(n-1)}{m-1}\right\rceil > (n-1)^2.$$

According to Lemma~\ref{lm:about_ceiling_A}, we can get
$$(n-\lambda)(n-1) + \frac{(n-\lambda)(n-1)}{m-1} > (n-1)^2,$$
which leads to $n-1 > m(\lambda -1)$. Finally we have $n \ge m\lambda-m+2$.
\end{proof}
\begin{remark}
Since N-ZDB function is a special case of ZD function, Theorem~\ref{th:app_gzdb_dss} is a generalization of Theorem 5.12 in~\cite{xu2018optimal}.
\end{remark}
\begin{remark}
When the ZD function $f$ is also a ZDB function, the condition of the constructed DSSs being optimal in this paper, is weaker than that in Proposition~\ref{prop:construct_dss}~\cite{ding2009optimal} (see also \cite[Lemma 6]{zhou2012some}). If $m \ge 2$, then $n \ge m\lambda-m+2$ which implies $n\ge m \lambda$.
\end{remark}
As a result, it improves Theorem 7 in~\cite{yi2018generic} a bit. Moreover, Theorem~\ref{th:improve_yzx_dss} does not depend on the construction method and will give more optimal DSSs.
\begin{lemma}\cite[Theorem 7]{yi2018generic}\label{lm:dss_optimal_old_condition}
Let $f$ be an $(n, \frac{n-1}{k}+1,k-1)$ ZDB function of Theorem 3 in~\cite{yi2018generic}. Then the DSS constructed by the method in Proposition~\ref{prop:construct_dss} is optimal if $n\ge (k-1)^2$.
\end{lemma}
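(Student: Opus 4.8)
The plan is to obtain this sufficient condition as an immediate corollary of the exact characterization in Theorem~\ref{th:app_gzdb_dss}, rather than re-proving it from scratch. The function here is an $(n,m,\lambda)$ ZDB function with image size $m=\frac{n-1}{k}+1$ and parameter $\lambda=k-1$, so the construction of Proposition~\ref{prop:construct_dss} applies and Theorem~\ref{th:app_gzdb_dss} asserts that the resulting DSS is optimal exactly when $n\ge m\lambda-m+2$. Since only the triple $(n,m,\lambda)$ enters that criterion, the particular preimage-size distribution of the functions from Theorem 3 of~\cite{yi2018generic} is irrelevant, and it suffices to show that the hypothesis $n\ge(k-1)^2$ forces the inequality $n\ge m\lambda-m+2$.

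First I would substitute $m=\frac{n-1}{k}+1$ and $\lambda=k-1$ into $m\lambda-m+2=m(\lambda-1)+2$ and clear the denominator $k$. A short computation turns $n\ge m\lambda-m+2$ into the equivalent integer inequality $2n\ge k^{2}-k+2$, that is, $n\ge\frac{k(k-1)}{2}+1$. Thus the task reduces to comparing the two thresholds $(k-1)^{2}$ and $\frac{k(k-1)}{2}+1$.

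Next I would compare them directly: $(k-1)^{2}-\left(\frac{k(k-1)}{2}+1\right)=\frac{k(k-3)}{2}$, which is nonnegative precisely when $k\ge3$. Hence for $k\ge3$ the hypothesis $n\ge(k-1)^{2}\ge\frac{k(k-1)}{2}+1$ yields the optimality condition of Theorem~\ref{th:app_gzdb_dss}, so the DSS is optimal. The only remaining cases are small $k$: the value $k=1$ is excluded because $m\ge2$ forces $k\ge2$, and for $k=2$ the validity of the ZDB function already gives $n\ge3$, which exceeds $\frac{k(k-1)}{2}+1=2$; so these boundary cases are dispatched by hand.

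I expect the only real care needed is this small-$k$ bookkeeping, since the threshold comparison is favorable only for $k\ge3$ and the endpoints must be checked separately rather than folded into the generic argument; beyond that the proof is just the substitution together with a single sign computation, and the conclusion follows at once from Theorem~\ref{th:app_gzdb_dss}. This route also makes transparent the surrounding remark that Theorem~\ref{th:app_gzdb_dss} improves the cited Theorem 7, because the inequality $\frac{k(k-1)}{2}+1\le(k-1)^{2}$ for $k\ge3$ shows that the old sufficient condition is strictly stronger than the sharp condition, so the new characterization admits more optimal DSSs.
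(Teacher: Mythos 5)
Your derivation is correct, but it is not the paper's own argument: the paper never proves Lemma~\ref{lm:dss_optimal_old_condition}, which is imported verbatim from \cite[Theorem 7]{yi2018generic}, and in that source the threshold $n\ge(k-1)^2$ comes from Ding's sufficient condition $n\ge m\lambda$ attached to Proposition~\ref{prop:construct_dss} (substituting $m=\frac{n-1}{k}+1$ and $\lambda=k-1$ into $n\ge m\lambda$ and clearing $k$ yields exactly $n\ge(k-1)^2$). Your route is genuinely different: you pass through the sharp criterion of Theorem~\ref{th:app_gzdb_dss}, whose substituted form $n\ge\frac{k(k-1)}{2}+1$ is precisely Theorem~\ref{th:improve_yzx_dss} of the paper, and then compare thresholds via $(k-1)^2-\bigl(\tfrac{k(k-1)}{2}+1\bigr)=\tfrac{k(k-3)}{2}\ge 0$ for $k\ge 3$. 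What this buys is exactly what the paper's surrounding remark asserts but does not spell out: the old condition is strictly stronger than the sharp one for $k\ge 3$, so the cited lemma is subsumed by the new characterization; the cost is that the comparison of thresholds fails for small $k$, forcing the separate case analysis that a direct substitution into $n\ge m\lambda$ avoids. One slip in that bookkeeping: your claim that $m\ge 2$ forces $k\ge 2$ is false ($k=1$ gives $m=n$; any bijection is an $(n,n,0)$ ZDB function), so $k=1$ is not excluded. It is harmless, though: for $k=1$ one has $\lambda=0$, the criterion $n\ge m\lambda-m+2=2-m$ of Theorem~\ref{th:app_gzdb_dss} holds trivially, and $n\ge\frac{k(k-1)}{2}+1=1$ always holds, so the conclusion is immediate. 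With that case repaired, your proof stands and is a clean, more informative alternative to the citation.
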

\begin{thms}\label{th:improve_yzx_dss}
Let $f$ be an $(n, \frac{n-1}{k}+1,k-1)$ ZDB function. Then the DSS constructed by the method in Proposition~\ref{prop:construct_dss} is optimal if $n\ge \frac{k(k-1)}{2}+1$.
\end{thms}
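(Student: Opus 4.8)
The plan is to deduce Theorem~\ref{th:improve_yzx_dss} directly from the sharp optimality criterion already established in Theorem~\ref{th:app_gzdb_dss}, so that no fresh combinatorial argument is needed and the entire proof reduces to a single algebraic simplification.

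First I would record the parameters. Since $f$ is an $(n,\frac{n-1}{k}+1,k-1)$ ZDB function, Proposition~\ref{prop:construct_dss} applies with $m=\frac{n-1}{k}+1$ and $\lambda=k-1$, producing a DSS $\mathcal{D}$ with covering number $\rho=n-\lambda=n-k+1$. The existence of such a ZDB function forces $\frac{n-1}{k}$ to be a positive integer, hence $m\ge 2$ and $n\ge k+1$, so we are squarely in the setting of Theorem~\ref{th:app_gzdb_dss}; that theorem tells us $\mathcal{D}$ is optimal if and only if $n\ge m\lambda-m+2$.

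The remaining step is to show that the hypothesis $n\ge\frac{k(k-1)}{2}+1$ is equivalent to this condition. Substituting $\lambda=k-1$ gives $m\lambda-m+2=m(k-2)+2$, and inserting $m=\frac{n-1+k}{k}$ and clearing the positive factor $k$ turns $n\ge m(k-2)+2$ into
\begin{equation*}
nk\ge (n+k-1)(k-2)+2k=nk-2n+k^{2}-k+2 .
\end{equation*}
After cancelling $nk$ this collapses to $2n\ge k^{2}-k+2$, that is, exactly $n\ge\frac{k(k-1)}{2}+1$. Hence the two conditions coincide and optimality follows at once from Theorem~\ref{th:app_gzdb_dss}.

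I expect no genuine obstacle here: the substantive content is entirely contained in Theorem~\ref{th:app_gzdb_dss}, and the only care required is bookkeeping in the substitution so as not to misplace the value $\lambda=k-1$. It is worth emphasising that the computation actually yields an equivalence, so the stated bound is not merely sufficient but necessary and sufficient; this is what makes the result an improvement over Lemma~\ref{lm:dss_optimal_old_condition} (whose condition $n\ge(k-1)^{2}$ is at least as strong, and strictly stronger once $k\ge 4$, since $2(k-1)^{2}-(k^{2}-k+2)=k(k-3)$) as well as over the condition $n\ge m\lambda$ of Proposition~\ref{prop:construct_dss}.
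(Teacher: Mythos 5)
Your proof is correct and takes essentially the same route as the paper: Theorem~\ref{th:improve_yzx_dss} is presented there as a direct consequence of the sharp criterion $n\ge m\lambda-m+2$ of Theorem~\ref{th:app_gzdb_dss}, and your substitution $m=\frac{n-1}{k}+1$, $\lambda=k-1$ with the simplification to $2n\ge k^{2}-k+2$, i.e.\ $n\ge\frac{k(k-1)}{2}+1$, is exactly the intended computation. Your added remark that the bound is in fact necessary and sufficient (and genuinely weaker than $n\ge(k-1)^{2}$ for $k\ge 4$) is consistent with the if-and-only-if form of Theorem~\ref{th:app_gzdb_dss}.
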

\begin{exam}
Using the notations in Proposition~\ref{prop:construct_zdb_on_generic_ring}, put $R=\Z_{11}$ and $G=\langle 4 \rangle$. Then the group $G$ of order $5$ satisfies Condition \eqref{eq:condition_of_zdb_on_generic_ring}. Hence there is an $(11, 3, 4)$ ZDB function $f$ which lead to an $(11, [1,5,5], 7)$ DSS. Obviously this DSS is optimal. It is easy to check that $f$ satisfies the condition in Theorem~\ref{th:improve_yzx_dss}, but not the condition in Lemma~\ref{lm:dss_optimal_old_condition}.
\end{exam}

No matter it is Type-A or Type-B, a ZD function may be used to construct an optimal DSS. Several classes of optimal DSSs are obtained by ZD functions in Theorem~\ref{th:construct_nzdb_on_generic_group}.
\begin{thms}\label{th:app_dss1}
Let $f$ be an $(n,\frac{n-1}{k},S)$ ZD function in Theorem~\ref{th:construct_nzdb_on_generic_group} such that $k=\max_{x \in S}{x}$. Then the DSS $\mathcal{D}$ in Proposition~\ref{prop:construct_dss} is optimal. Moreover, it is perfect if $f$ is a ZDB function.
\end{thms}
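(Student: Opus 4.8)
The plan is to reduce the optimality claim to the numerical criterion already established in Theorem~\ref{th:app_gzdb_dss}, and to settle the perfectness claim by a direct count of representation numbers.

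First I would fix notation, writing $m=\frac{n-1}{k}$ for the image size and $\lambda=\max_{x\in S}x$. The hypothesis $k=\max_{x\in S}x$ forces $\lambda=k$, which by inspection of the list of possible $S$ in Theorem~\ref{th:construct_nzdb_on_generic_group} selects exactly the two cases $S=\{k\}$ and $S=\{k-1,k\}$: the cases $\{k-1,k+1\}$ and $\{k-1,k,k+1\}$ have maximum $k+1$, and the case $S=\{n\}$ occurs only when $m=1$, forcing $n=k+1$ and hence maximum $n=k+1\neq k$. In particular the degenerate value $m=1$ is ruled out, so $m\ge 2$. Since the preimage sets $D_i=f^{-1}(b_i)$ partition $A$, the framework of Proposition~\ref{prop:construct_dss} carries over to the ZD function $f$ with $\lambda=k$ (as noted at the start of this section): for each nonzero $g$ the number of ordered pairs $(x,y)$ with $x-y=g$ and $f(x)\neq f(y)$ equals $n-\lambda_g\ge n-\lambda$, so $\mathcal{D}$ is an $(n,[w_0,\ldots,w_{m-1}],n-k)$ DSS with $\sum_i w_i=n$. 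The hypothesis $n\ge m\lambda$ of Proposition~\ref{prop:construct_dss} also holds, since $m\lambda=\frac{n-1}{k}\cdot k=n-1\le n$.

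Next I would invoke Theorem~\ref{th:app_gzdb_dss}, which asserts that $\mathcal{D}$ is optimal if and only if $n\ge m\lambda-m+2$. Substituting $m=\frac{n-1}{k}$ and $\lambda=k$, this inequality reads $n\ge\frac{(n-1)(k-1)}{k}+2$; clearing the denominator and simplifying collapses it to $n\ge k+1$, which holds automatically because $m\ge 1$ already gives $n-1\ge k$. Hence the optimality criterion of Theorem~\ref{th:app_gzdb_dss} is always satisfied and $\mathcal{D}$ is optimal.

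For the perfectness statement, I would observe that $f$ is a ZDB function precisely when $S$ is a singleton, i.e. $S=\{k\}$. In that case $\lambda_\alpha=k$ for every nonzero $\alpha$, so the representation count $n-\lambda_g=n-k$ is constant over all nonzero $g$; thus every nonzero element appears exactly $n-k$ times and $\mathcal{D}$ is perfect. I expect the only genuine work to be the algebraic reduction of the optimality inequality to $n\ge k+1$; the remaining steps are bookkeeping about which $S$ the hypothesis $k=\max_{x\in S}x$ admits, together with the elementary representation count underlying perfectness.
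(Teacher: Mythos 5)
Your proposal is correct and follows exactly the route the paper intends: the paper states Theorem~\ref{th:app_dss1} without an explicit proof, leaving it to follow from the optimality criterion of Theorem~\ref{th:app_gzdb_dss} applied with $m=\frac{n-1}{k}$ and $\lambda=k$, which is precisely your reduction (including the observation that $k=\max_{x\in S}x$ restricts $S$ to $\{k\}$ or $\{k-1,k\}$, so $m\ge 2$, and the inequality $n\ge m\lambda-m+2$ collapses to $n\ge k+1$). Your direct count for the perfectness claim when $S=\{k\}$ likewise matches the intended argument, so the proof is sound and fills in the omitted details faithfully.
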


\begin{conc}\cite[Theorem 5.13]{xu2018optimal}\label{con:app_dss1}
Let $n$ be an positive integer with the following factorization $n=\prod_{i=1}^{r}{p_i^{e_i}}$,
where $p_i$ are odd prime numbers, $e_i$ are positive integers $(i=1,2,\ldots,r)$. Let $e\ge 2$ be an odd integer such that $e \mid p_i-1$ for every $i$. Then there exists an optimal $(n, [\underbrace{e,e,\ldots,e}_{\frac{n-1}{e}-1\text{ times}},e+1], n-e)$ DSS over $(\mathbb{Z}_{n},+)$.
\end{conc}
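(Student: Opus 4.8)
The plan is to realize this Corollary as the cyclic-group instance of the change-point machinery developed above, applied to the explicit ZDB functions on $\Z_n$. First I would produce the starting ZDB function: by Proposition~\ref{prop:zdb_1_on_zn} (equivalently, Proposition~\ref{prop:construct_zdb_on_generic_ring} taken with $R=\Z_n$ and a subgroup $G\le\Z_n^{\times}$ of order $k=e$, whose existence under the hypothesis $e\mid p_i-1$ for all $i$ is exactly what those propositions guarantee), there is an $(n,\frac{n-1}{e}+1,e-1)$ ZDB function $f_G$ on $\Z_n$ of Type-A whose only singleton preimage is $f_G^{-1}(f_G(0))=\{0\}$, all other preimages being cosets $rG$ of size $e$.

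Next I would apply the change point technic. Taking $a_0=0$ and $a=1$ in Theorem~\ref{th:construct_nzdb_on_generic_group} (as noted in the remark following it, $f_G$ meets its hypotheses), the modified function $f_G^0$ reassigns the point $0$ from its singleton class into the coset $G=1\cdot G$. Hence the image shrinks by one to size $m=\frac{n-1}{e}$, the preimage of $h_G(G)$ grows to $G\cup\{0\}$ of size $e+1$, and the remaining $\frac{n-1}{e}-1$ cosets keep size $e$. Thus $f_G^0$ is a Type-B ZD function with $\SP(f_G^0)=\{(e+1)^{1},e^{\frac{n-1}{e}-1}\}$; reading these off as the block sizes $D_i=(f_G^0)^{-1}(b_i)$ in Proposition~\ref{prop:construct_dss} already yields the pattern $[\underbrace{e,\ldots,e}_{\frac{n-1}{e}-1},e+1]$ of the statement. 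I note that this step tacitly needs $\frac{n-1}{e}\ge 2$, so that the DSS has at least two blocks and is nondegenerate.

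The crux is to pin down $\lambda=\max S$, and this is exactly where the hypothesis that $e$ is odd enters. Since $k=e$ is odd and $\Z_n$ has odd characteristic ($n$ being a product of odd primes), Lemma~\ref{lm:about_p2} gives $-1\notin G$. Feeding this into Theorem~\ref{th:construct_nzdb_on_generic_ring} yields $S=\{e-1,e\}$ when $\frac{n-1}{e}>2$ and $S=\{e\}$ when $\frac{n-1}{e}=2$; in both cases $\max_{x\in S}x=e$. This is precisely the matching condition $k=\max_{x\in S}x$ together with $m=\frac{n-1}{k}$ required by Theorem~\ref{th:app_dss1}: had $e$ been even (so that $-1\in G$), we would instead have $S=\{e-1,e+1\}$ with $\max S=e+1\ne e$, and the hypothesis of Theorem~\ref{th:app_dss1} would fail. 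I expect this bookkeeping, namely verifying $-1\notin G$ and thereby aligning $\max S$ with the parameter $e$, to be the only real obstacle; everything else is a direct invocation.

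Finally I would conclude by Theorem~\ref{th:app_dss1} (with $k=e=\max S$): the system $\mathcal{D}=\{(f_G^0)^{-1}(b_i)\}$ of Proposition~\ref{prop:construct_dss} is an optimal DSS, and by that proposition its parameter is $n-\lambda=n-e$. Together with the block sizes computed above this gives an optimal $(n,[\underbrace{e,\ldots,e}_{\frac{n-1}{e}-1},e+1],n-e)$ DSS over $(\Z_n,+)$, as claimed. As a consistency check one may feed $m=\frac{n-1}{e}$ and $\lambda=e$ into Theorem~\ref{th:app_gzdb_dss}: since $m\lambda=n-1$, the optimality threshold $n\ge m\lambda-m+2$ becomes $\frac{n-1}{e}\ge 1$, which holds automatically.
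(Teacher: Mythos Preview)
Your proposal is correct and follows exactly the route the paper intends: the corollary is presented without proof as a direct specialization of Theorem~\ref{th:app_dss1}, obtained by feeding in the Type-A ZDB function on $\Z_n$ from Proposition~\ref{prop:zdb_1_on_zn}, applying the change-point construction of Theorem~\ref{th:construct_nzdb_on_generic_ring}/\ref{th:construct_nzdb_on_generic_group}, and using Lemma~\ref{lm:about_p2} (with $e$ odd and $n$ odd) to force $-1\notin G$ so that $\max S=e$. Your added remarks about the nondegeneracy condition $\frac{n-1}{e}\ge 2$ and the consistency check via Theorem~\ref{th:app_gzdb_dss} are correct and in fact more detailed than the paper itself provides.
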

\begin{conc}\label{con:app_dss2}
Let $n$ be an positive integer with the following factorization $n=\prod_{i=1}^{r}{p_i^{e_i}}$,
where $p_i$ are prime numbers, $e_i$ are positive integers $(i=1,2,\ldots,r)$. Let $e\ge 2$ be an odd integer such that $e \mid p_i^{e_i}-1$ for every $i$. Then there exists an optimal $(n, [\underbrace{e,e,\ldots,e}_{\frac{n-1}{e}-1\text{ times}},e+1], n-e)$ DSS over $(\prod_{i=1}^{r}\F_{p_i^{e_i}},+)$.
\end{conc}

\begin{conc}\label{con:app_dss3}
Let $n$ be an positive integer with the following factorization $n=\prod_{i=1}^{r}{p_i^{e_i}}$,
where $p_i$ are odd prime numbers, $e_i$ are positive integers $(i=1,2,\ldots,r)$. Let $e\ge 2$ be an odd integer such that $e(e-1) \mid p_i-1$ for every $i$. Then there exists an optimal $(en, [\underbrace{e-1,e-1,\ldots,e-1}_{\frac{en-1}{e-1}-1\text{ times}},e], en-e+1)$ DSS over $(\Z_{en},+)$.
\end{conc}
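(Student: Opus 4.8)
The plan is to realize the required DSS as the block system of a zero-difference function obtained by the change point technic, and then to invoke the optimality criterion of Theorem~\ref{th:app_dss1}. First I would take the ZDB function $f$ furnished by Proposition~\ref{prop:zdb_2_on_zn}: under the hypothesis $e(e-1)\mid p_i-1$ it is an $(en,\frac{en-1}{e-1}+1,e-2)$ ZDB function on $\Z_{en}$. Being of Type-A, its preimage sizes are $\SP(f)=\{1,(e-1)^{(en-1)/(e-1)}\}$, i.e.\ one singleton block together with $\frac{en-1}{e-1}$ blocks of common size $e-1$ (the ZDB parameter $e-2=(e-1)-1$ is exactly what a common block size $e-1$ forces).

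Next I would apply Theorem~\ref{th:construct_nzdb_on_generic_group}, which Type-A ZDB functions satisfy: taking $a_0$ to be the point carrying the singleton value and $a$ a point in a block of size $e-1$, the modified function $g=g_a$ absorbs the singleton into the block of $f(a)$. Thus $g$ is an $(en,\frac{en-1}{e-1},S)$ ZD function of Type-B whose preimage sizes are precisely $[\underbrace{e-1,\dots,e-1}_{\frac{en-1}{e-1}-1},e]$, which is exactly the block list demanded by the corollary and partitions $\Z_{en}$ with total size $en$.

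The decisive step is to determine $S$, and in particular to show $\max_{x\in S}x=e-1$. By Theorem~\ref{th:construct_nzdb_on_generic_group} the outcome is controlled by $D=(I(a)-a_0)\cap(a_0-I(a))$, where $I(a)$ is the size-$(e-1)$ block containing $a$, and I would argue $D=\emptyset$, so that $S=\{e-2,e-1\}$ and hence $\max_{x\in S}x=e-1=k$. This is where the assumption that $e$ is odd is essential: writing $\Z_{en}\cong\Z_e\times\Z_n$ (legitimate since $e\mid p_i-1$ gives $\gcd(e,n)=1$), the blocks of the Proposition~\ref{prop:zdb_2_on_zn} construction spread along the $\Z_e$-coordinate, and for odd $e$ the negation map fixes no nonzero residue, so $I(a)$ and $a_0-I(a)$ fall into distinct blocks. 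I expect this to be the main obstacle, since it rests on the explicit internal shape of the second-type construction rather than the abstract subgroup picture; indeed the block size $e-1$ here is \emph{even}, so the naive ring heuristic of Lemma~\ref{lm:about_p2} would wrongly predict symmetry under negation and thus $D\ne\emptyset$. One genuinely has to exploit the product decomposition to see the asymmetry, and this is precisely what distinguishes the present case from Corollaries~\ref{con:app_dss1} and~\ref{con:app_dss2}, where the odd block size makes $D=\emptyset$ automatic.

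Once $\max_{x\in S}x=e-1=k$ is in hand, $g$ meets the hypotheses of Theorem~\ref{th:app_dss1}, which immediately gives that the DSS $\mathcal{D}$ of Proposition~\ref{prop:construct_dss} is optimal. I would then read off the parameters: the DSS lives on $\Z_{en}$, its block list is $[\underbrace{e-1,\dots,e-1}_{\frac{en-1}{e-1}-1},e]$, and its index is $en-\max_{x\in S}x=en-(e-1)=en-e+1$, as claimed. As a final sanity check one can verify straight from Lemma~\ref{lm:bound_of_DSS} that with $q=\frac{en-1}{e-1}$ the quantity $(en-e+1)(en-1)+\lceil\frac{(en-e+1)(en-1)}{q-1}\rceil$ rounds up to $(en)^2$, so the bound $\tau\ge en$ is met exactly.
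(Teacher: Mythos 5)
Your overall route is exactly the intended one (the paper states Corollary~\ref{con:app_dss3} without proof, as an application of Proposition~\ref{prop:zdb_2_on_zn} $\to$ Theorem~\ref{th:construct_nzdb_on_generic_group} $\to$ Theorem~\ref{th:app_dss1}), your bookkeeping of block sizes and of the final parameters is correct, and you have correctly isolated the decisive claim: $\max_{x\in S}x=k=e-1$, i.e.\ $D=(I(a)-a_0)\cap(a_0-I(a))=\emptyset$ for a suitable choice of $a$. But your argument for that claim is a non sequitur, and this is a genuine gap, not a presentational one. Taking $a_0=0$, the condition $D=\emptyset$ says that $I(a)$ contains no pair $\{x,-x\}$. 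Your justification --- the blocks ``spread along the $\Z_e$-coordinate'' and negation fixes no nonzero residue of $\Z_e$ --- cannot yield this. First, a block of size $e-1$ with pairwise distinct $\Z_e$-coordinates occupies $e-1$ of the $e$ residues, so its coordinate set $C$ satisfies $|C\cap(-C)|\ge 2(e-1)-e=e-2\ge 1$ for $e\ge 3$; disjointness of $I(a)$ and $-I(a)$ can therefore only be decided by the $\Z_n$-coordinates, which your argument never examines. Second, not all blocks can even have distinct $\Z_e$-coordinates: if they did, no internal difference of the form $(0,\delta)$, $\delta\ne 0$, would occur in any block, contradicting $\lambda=e-2\ge 1$. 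So the construction necessarily contains blocks with repeated $\Z_e$-coordinates, and it may contain negation-symmetric blocks; if $a$ lands in a symmetric block, Theorem~\ref{th:construct_nzdb_on_generic_group} gives $\max S=e$, and then the DSS has $\rho=en-e$ and is \emph{not} optimal, since Theorem~\ref{th:app_gzdb_dss} would require $en\ge m\lambda-m+2=(en-1)+2$ with $m=\frac{en-1}{e-1}$, $\lambda=e$.

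The gap matters because the truth of the statement really does hinge on internal details of the Proposition~\ref{prop:zdb_2_on_zn} construction that neither you nor the paper supplies. The parameters alone never force a good block to exist: the patterned-starter PDF $\bigl\{\{x,-x\}:x\in\Z_{3n}^*\bigr\}$ has exactly the parameters of the $e=3$ case, yet every one of its blocks is symmetric, so for it every admissible change point gives $\max S=e$. Likewise, any coset-type realization $I(a)=aG$ with $|G|=e-1$ even would have $-1\in G$ by Lemma~\ref{lm:about_p2}, hence all blocks symmetric; this is precisely the paper's own remark after Theorem~\ref{th:app_fhs3} (``$k$ must be odd''), which, taken at face value, contradicts the corollary --- so one must show the construction of~\cite{yi2019note,cai2017zero} is \emph{not} of coset type and exhibit a specific asymmetric block. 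Concretely, the blocks over an orbit $yG$, $G=\langle g\rangle$ of order $e(e-1)$ in $\Z_n^\times$, have a staircase form $\{(c+\sigma(i),\,yg^{t+ie}):i\in\Z_{e-1}\}\subset\Z_e\times\Z_n$; for odd $e$ one has $-1=g^{e(e-1)/2}$ with $e\mid e(e-1)/2$, so negation preserves each coset $g^t\langle g^e\rangle$ in the second coordinate, and such a block misses its own negation iff $\sigma(i)+\sigma(i-\tfrac{e-1}{2})\ne -2c$ for all $i$; since at most $e-1$ values of $-2c$ are forbidden and $c$ ranges over $e$ values, a good block exists, and $a$ must be placed in it. An argument of this kind --- identifying the explicit block structure, showing some block is disjoint from its negation, and choosing $a$ there --- is what is missing from your proof (and, it should be said, from the paper itself).
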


\begin{conc}\label{con:app_dss4}
Let $n$ be an positive integer with the following factorization $n=\prod_{i=1}^{r}{p_i^{e_i}}$,
where $p_i$ are prime numbers, $e_i$ are positive integers $(i=1,2,\ldots,r)$. Let $e\ge 2$ be an odd integer such that $e(e-1) \mid p_i^{e_i}-1$ for every $i$. Then there exists an optimal $(en, [\underbrace{e-1,e-1,\ldots,e-1}_{\frac{en-1}{e-1}-1\text{ times}},e], en-e+1)$ DSSs over $(\Z_{e}\times \prod_{i=1}^{r}{\F_{p_i^{e_i}}},+)$.
\end{conc}

\begin{remark}
Corollary~\ref{con:app_dss2} is a generalizations of Theorem 5.13 in~\cite{xu2018optimal} and provide more optimal DSSs with new parameters. In the following, Example~\ref{ex:app_dss1} can not be obtained by Theorem 5.13 in~\cite{xu2018optimal}.
\end{remark}
\begin{exam}\label{ex:app_dss1}
In Theorem~\ref{th:app_dss1}, put $n=3^2\times 5=45$ and $e=4$. Then we obtain an optimal $(45, [\underbrace{4,4,\ldots,4}_{10\text{ times}},5], 41)$ DSS.
\end{exam}

We summarize some known optimal DSSs in Table~\ref{tb:known_optimal_dss} where $p$ and $q$ denote a prime number and a prime power, respectively.
\begin{sidewaystable}[!tbp]
\centering
\caption{\text{Some known optimal DSSs with parameters $(n, [ w_0, w_1, \ldots, w_{M-1} ], \rho)$}}\label{tb:known_optimal_dss}
\begin{tabular*}{\linewidth }{c|c|c|c|c}
\hline
$n$ & $w_0, w_1, \ldots, w_{M-1}$ & $\rho$ & Constraints & Reference.   \\
\hline
$q^2+1$ & $w_0, w_1, \ldots, w_{q-1}$ & $q^2-q$ & $m\in \PZ$ and $q=2^m$ &  \cite[Proposition 10]{ding2008optimal}  \\
$q$ & $w_0, w_1, \ldots, w_{d-1}$ & $ q-\frac{q-d}{d} $ & $d\mid q $ & \cite[Proposition 7]{ding2008optimal} \\
$p^2$ & $2p-1,p-1,\ldots,p-1$ & $p^2-p $ & $-$ & \cite[Corollary 8]{ding2009optimal}\\
$\frac{q^m-1}{N}$ & $w_0, w_1, \ldots, w_{q-1}$ & $\frac{q^{m-1}(q-1)}{N}$ & $N\mid q-1$ and $\gcd(N,m)=1$ & \cite[Theorem 9]{ding2009optimal}\\
$q^m$ & $w_0, w_1, \ldots, w_{\frac{q^m-1}{d}}$ & $q^m-d+1$ & $d\mid q-1$ & \cite[Proposition 14]{ding2012zero} \\
$q^2+q+1$ & $w_0, w_1, \ldots, w_{q-1}$ & $q^2+2$ & $-$ & \cite[Proposition 22]{ding2012zero} \\
$q^m-1$ & $w_0, w_1, \ldots, w_{q^s-1}$ & $q^m-q^{m-s}$ & $1\le s \le m $ & \cite[Corollary 1]{zhou2012some} \\
$t\frac{q^m-1}{N}$ & $w_0, w_1, \ldots, w_{q^s-1}$ & $t\frac{q^m-q^{m-s}}{N}$ & \scell{$N\mid q-1$, $\gcd(N,m)=1$, \\ $1\le t \le N$ and $1\le s \le m $ } & \cite[Theorem 6]{zhou2012some} \\
$n$ & $1,e,e,\ldots,e$ & $n-e+1 $ & \scell{ $n=\prod_{i=1}^{k}{p_i^{e_i}}>3$, $p_i$ is odd prime \\ and $e\mid (p_i-1)$ for all $i$ } & \cite[Theorem 1]{cai2013new} \\
$ev$ & $1,e-1,e-1,\ldots,e-1$ & $ev-e+2 $ & \scell{ $v=\prod_{i=1}^{k}{p_i^{e_i}}$, $p_i$ is odd prime \\ and $e(e-1)\mid (p_i-1)$ for all $i$ } & \cite[Theorem 1]{cai2017zero} \\
$n$ & $1,e,e,\ldots,e$ & $n-e+1 $ & \scell{ $n=\prod_{i=1}^{k}{p_i^{e_i}}$, $n\ge (m-1)^2$ \\ and $e\mid (p_i^{e_i}-1)$ for all $i$ } & \cite[Theorem 7]{yi2018generic} \\
$n$ & $e+1,e,e,\ldots,e$ & $n-e $ & \scell{ $n=\prod_{i=1}^{k}{p_i^{e_i}}$, $p_i$ is odd prime, \\ $e$ is odd and $e\mid (p_i-1)$ for all $i$ } & \cite[Theorem 5.13]{xu2018optimal} \\
$ev$ & \scell{$1,e-1,e-1,\ldots,e-1$,\\ $e-2,e-2,\ldots,e-2$} & $ev-e+2 $ & \scell{ $v=\prod_{i=1}^{k}{p_i^{e_i}}$, $p_i$ is odd prime,$e\ge 3$, \\ $(e-2)\mid (p_i-1)$ and $e\mid (p_i-1)$ for all $i$ } & \cite[Theorem 5.14]{xu2018optimal} \\
$ev$ & \scell{$e-1,e-1,\ldots,e-1$,\\ $e-2,e-2,\ldots,e-2$} & $ev-e+2 $ & \scell{ $v=\prod_{i=1}^{k}{p_i^{e_i}}$, $p_i$ is odd prime, \\ $e$ is odd with $e\ge 3$, \\ $(e-2)\mid (p_i-1)$ and $e\mid (p_i-1)$ for all $i$ } & \cite[Theorem 5.15]{xu2018optimal} \\
$n$ & $e+1,e,e,\ldots,e$ & $n-e $ & \scell{ $n=\prod_{i=1}^{k}{p_i^{e_i}}$, $p_i$ is odd prime, \\ $e$ is odd and $e\mid (p_i^{e_i}-1)$ for all $i$ } & Corollary~\ref{con:app_dss1} \\
$ev$ & $e,e-1,e-1,\ldots,e-1$ & $ev-e+1 $ & \scell{ $v=\prod_{i=1}^{k}{p_i^{e_i}}$, $p_i$ is odd prime, \\ $e$ is odd and $e(e-1)\mid (p_i-1)$ for all $i$ } & Corollary~\ref{con:app_dss2} $^*$ \\
$ev$ & $e,e-1,e-1,\ldots,e-1$ & $ev-e+1 $ & \scell{ $v=\prod_{i=1}^{k}{p_i^{e_i}}$, $p_i$ is odd prime, \\ $e$ is odd  and $e(e-1)\mid (p_i^{e_i}-1)$ for all $i$ } & Corollary~\ref{con:app_dss3} $^*$ \\
\hline
\end{tabular*}
\end{sidewaystable}

\subsection{Frequency-hopping Sequences}
For any two sequences $X, Y$ of length $n$ over an alphabet $B$. The Hamming correlation $H_{X,Y}$ is defined as
$$H_{X,Y}(t)=\sum_{i=0}^{n-1}{h[x_i,y_{(i+t)\pmod{n}}]},0\le t < n$$
where $h[a,b]=1$ if $a=b$, and 0 otherwise. Frequency-hopping sequences (FHS) are the sequences such that the hamming autocorrelation is as small as possible.
\citeauthor{lempel1974families} gave a lower bound in \citeyear{lempel1974families}~\cite{lempel1974families}. Let $(n,m,\lambda)$ denote an FHS $X$ of length $n$ over an alphabet of size $m$ with $\lambda=H(X)$. An FHS is optimal if the bound in Lemma~\ref{lm:bound_of_one_FH} is met.
\begin{lemma}~\cite{lempel1974families}\label{lm:bound_of_one_FH}
For any FHS $X$ of length $n$ over an alphabet of size $m$, define
$$H(X)=\max_{1\le t< n}\{H_{X,X}(t)\},$$
then
\begin{equation}\label{eq:bound_of_one_FH}
H(X) \ge  \left\lceil \frac{(n-\epsilon)(n+\epsilon-m)}{m(n-1)} \right\rceil,
\end{equation}
where $\epsilon$ is the least nonnegative residue of $n$ modulo $m$ and $\lceil x \rceil$ denotes the smallest integer that no less that $x$.
\end{lemma}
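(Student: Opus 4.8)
The plan is to prove this classical Lempel--Greenberger bound by a double-counting argument on the total Hamming autocorrelation, followed by exactly the integer-optimization step already carried out in Lemma~\ref{lm:property_BB_of_ZD}. Write $X=(x_0,x_1,\ldots,x_{n-1})$ and, for each symbol $b$ in the alphabet, let $c_b=|\{i : x_i=b\}|$ be its number of occurrences, so that $\sum_b c_b=n$. First I would compute the full sum $\sum_{t=0}^{n-1}H_{X,X}(t)$. Interchanging the order of summation and noting that for each fixed $i$ the index $(i+t)\bmod n$ runs over all of $\{0,\ldots,n-1\}$ as $t$ does, the inner sum counts the positions equal to $x_i$, namely $c_{x_i}$. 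Summing over $i$ and grouping by symbol yields the key identity
$$\sum_{t=0}^{n-1}H_{X,X}(t)=\sum_{i=0}^{n-1}c_{x_i}=\sum_{b}c_b^2.$$

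Next I would isolate the zero-shift term. Since every position matches itself, $H_{X,X}(0)=n$, so subtracting it gives $\sum_{t=1}^{n-1}H_{X,X}(t)=\sum_b c_b^2-n$. Because the left-hand side is a sum of $n-1$ terms, each bounded above by the maximum $H(X)$, the averaging inequality produces
$$(n-1)H(X)\ge \sum_{t=1}^{n-1}H_{X,X}(t)=\sum_b c_b^2-n.$$
Hence $H(X)\ge \bigl(\sum_b c_b^2-n\bigr)/(n-1)$, and it remains only to bound $\sum_b c_b^2$ from below.

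The final step is the integer optimization: minimize $\sum_b c_b^2$ subject to $\sum_b c_b=n$ over $m$ non-negative integer parts. Writing $n=km+\epsilon$ with $0\le\epsilon<m$, the sum of squares is minimized by the most balanced distribution ($\epsilon$ parts equal to $k+1$ and $m-\epsilon$ parts equal to $k$), exactly as in Lemma~\ref{lm:property_BB_of_ZD}, giving $\sum_b c_b^2\ge(m-\epsilon)k^2+\epsilon(k+1)^2$, and a short calculation shows $(m-\epsilon)k^2+\epsilon(k+1)^2-n=\frac{(n-\epsilon)(n+\epsilon-m)}{m}$. Substituting back gives $H(X)\ge\frac{(n-\epsilon)(n+\epsilon-m)}{m(n-1)}$, and since $H(X)$ is an integer the right-hand side may be replaced by its ceiling, yielding \eqref{eq:bound_of_one_FH}. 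I expect the only real subtlety to be justifying the balancing step, that the most uniform integer composition minimizes the sum of squares; this is the standard convexity/exchange argument already used in Lemma~\ref{lm:property_BB_of_ZD}, so I would simply invoke it rather than redo the computation.
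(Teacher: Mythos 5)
Your proof is correct, but note that there is no proof in the paper to compare against: Lemma~\ref{lm:bound_of_one_FH} is imported verbatim from \cite{lempel1974families} as a known result. Your double-counting argument is the standard proof of that bound, and it is in fact the sequence analogue of the machinery the paper develops for ZD functions: your identity $\sum_{t=1}^{n-1}H_{X,X}(t)=\sum_b c_b^2-n$ plays the role of Lemma~\ref{lm:property_A_of_ZD}, with shifts $t$ in place of the nonzero group elements $\alpha$ and occurrence counts $c_b$ in place of the preimage sizes $r_b$, and your integer-optimization step is literally the one carried out in Lemmas~\ref{lm:property_B_of_ZD} and~\ref{lm:property_BB_of_ZD}, including the algebraic identity $(m-\epsilon)k^2+\epsilon(k+1)^2-n=\frac{(n-\epsilon)(n+\epsilon-m)}{m}$. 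All steps check out: the cyclic-shift bijection gives $\sum_{t=0}^{n-1}H_{X,X}(t)=\sum_b c_b^2$, subtracting $H_{X,X}(0)=n$ and averaging over the $n-1$ nonzero shifts gives $H(X)\ge\bigl(\sum_b c_b^2-n\bigr)/(n-1)$, the most balanced composition minimizes $\sum_b c_b^2$ (valid even if some symbols never occur, since allowing zero parts only enlarges the feasible set and the balanced point remains optimal), and integrality of $H(X)$ justifies passing to the ceiling. One could quibble that the convexity/exchange argument is invoked rather than proved, but since the paper itself uses it without proof (``by integral programming'') in Lemma~\ref{lm:property_B_of_ZD}, that is a reasonable level of detail.
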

In \citeyear{wang2014sets}, \citeauthor{wang2014sets} gave a method to construct FHSs by a ZDB function.
\begin{prop}\cite[Lemma 5.3]{wang2014sets}\label{prop:construct_fhs}
Let $f$ be an $(n,m,S)$ ZDB function from a cyclic group $(A,+)$ to a group $(B,+)$. Then $\mathcal{T}=\{f(i \alpha)\}_{i=0}^{n-1}$ is an $(n, m, \lambda)$ FHS, where $\alpha$ is a generator of $A$ and $\lambda =\max_{x \in S}{x}$.
\end{prop}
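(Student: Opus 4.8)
The plan is to verify directly the three quantities that define an $(n,m,\lambda)$ FHS for the sequence $\mathcal{T}=(T_0,T_1,\ldots,T_{n-1})$ with $T_i=f(i\alpha)$: its length, its alphabet size, and its maximum nontrivial Hamming autocorrelation. The length is $n$ by construction. For the alphabet size, I would use that $\alpha$ is a generator of the cyclic group $A$ of order $n$, so the map $i\mapsto i\alpha$ is a bijection from $\{0,1,\ldots,n-1\}$ onto $A$; hence the set of symbols occurring in $\mathcal{T}$ is exactly $f(A)$, which has cardinality $m$.

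The core of the argument is the computation of $H_{\mathcal{T},\mathcal{T}}(t)$ for each $t$ with $1\le t<n$, and here a change of variables turns the autocorrelation into a zero-difference count. Since $\alpha$ has order $n$, multiplication by $\alpha$ is insensitive to reducing the index modulo $n$, so $T_{(i+t)\bmod n}=f((i+t)\alpha)=f(i\alpha+t\alpha)$. Writing $x=i\alpha$ and $\beta=t\alpha$, and letting $i$ run over $\{0,\ldots,n-1\}$ so that $x$ runs over all of $A$, I obtain
$$H_{\mathcal{T},\mathcal{T}}(t)=\sum_{i=0}^{n-1}h[f(i\alpha),f(i\alpha+t\alpha)]=|\{x\in A\mid f(x+\beta)=f(x)\}|=\lambda_\beta.$$

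It then remains to maximize over $t$. As $t$ ranges over $\{1,2,\ldots,n-1\}$, the element $\beta=t\alpha$ ranges bijectively over $A^*=A\setminus\{0\}$, again because $\alpha$ generates a cyclic group of order $n$ (in particular $t\alpha\ne 0$ for $1\le t<n$). Consequently the collection of nontrivial autocorrelation values $\{H_{\mathcal{T},\mathcal{T}}(t)\mid 1\le t<n\}$ has underlying set $\{\lambda_\beta\mid\beta\in A^*\}=S$ by the definition of the ZD property. Taking the maximum yields $H(\mathcal{T})=\max_{x\in S}x=\lambda$, completing the identification of $\mathcal{T}$ as an $(n,m,\lambda)$ FHS.

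I expect no serious obstacle here: the entire proof is a relabeling, and the only points requiring care are the two uses of the generator hypothesis — first to identify the symbol set with $f(A)$, and second to ensure that the shifts $t\alpha$ sweep out every nonzero difference exactly once, so that the spectrum of autocorrelations is precisely $S$. The modular reduction of the index, which disappears after multiplying by the order-$n$ element $\alpha$, is the one spot where a careless step could introduce an error.
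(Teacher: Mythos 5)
Your proof is correct. There is, however, no internal proof to compare it against: the paper imports this proposition verbatim from \cite[Lemma 5.3]{wang2014sets} and supplies no argument of its own. Your verification is the standard one underlying the cited lemma: the relabeling $x=i\alpha$, $\beta=t\alpha$, the use of the order-$n$ generator to absorb the reduction of the index modulo $n$ (so $T_{(i+t)\bmod n}=f(i\alpha+t\alpha)$), and the bijection $t\mapsto t\alpha$ from $\{1,\ldots,n-1\}$ onto $A^*$, which identifies the set of nontrivial autocorrelation values with the zero-difference spectrum $S$ and hence $H(\mathcal{T})=\max_{x\in S}x=\lambda$. One small point you implicitly got right: although the statement says ``ZDB function,'' the parameter triple $(n,m,S)$ and the paper's later applications (Theorems on optimal FHSs from ZD functions) show the intended hypothesis is the more general ZD property, and your argument is carried out at exactly that level of generality.
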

\begin{remark}
The criteria of optimality in this subsection is different from that in~\cite{liu2016some}. In this subsection, we call an FHS is optimal with respect to the bound in Lemma~\ref{lm:bound_of_one_FH}. In~\cite{liu2016some}, they call an FHS is optimal with respect to a set of FHSs (see Lemma 12 in~\cite{liu2016some}).
\end{remark}

Theorem 5.5, Theorem 5.6 and Theorem 5.7 in~\cite{xu2018optimal} imply that, a ZD function $f$ is not necessary ZDB to obtain optimal FHS. In order to characterize such ZD functions, we will give the conditions of the constructed FHSs being optimal is given as follows.
\begin{thms}\label{th:app_fhs1}
In proposition~\ref{prop:construct_fhs}, the FHS $\mathcal{T}$ is optimal, if and only if, $0\le \lambda-C<1$, where $C=\frac{(n-\epsilon)(n+\epsilon-m)}{m(n-1)}$ and $n=km+\epsilon$ with $0\le \epsilon < m$.
\end{thms}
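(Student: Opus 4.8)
The plan is to reduce the optimality condition to the Lempel--Greene bound of Lemma~\ref{lm:bound_of_one_FH} and then translate the resulting ceiling equality into the stated inequality using the elementary ceiling lemmas already established. First I would record what the bound says for this particular sequence. By Proposition~\ref{prop:construct_fhs} the sequence $\mathcal{T}=\{f(i\alpha)\}_{i=0}^{n-1}$ is an $(n,m,\lambda)$ FHS with $\lambda=\max_{x\in S}x$; in particular $H(\mathcal{T})=\lambda$ and $\lambda$ is a nonnegative integer. If one wishes to see this directly, the autocorrelation $H_{\mathcal{T},\mathcal{T}}(t)$ equals $|\{x\in A\mid f(x+t\alpha)=f(x)\}|=\lambda_{t\alpha}$, since $x=i\alpha$ runs over all of $A$; and as $t$ runs over $1,\dots,n-1$ the element $t\alpha$ runs over all of $A^*$, so $\max_{t}H_{\mathcal{T},\mathcal{T}}(t)=\max_{\alpha\in A^*}\lambda_\alpha=\lambda$.

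Next I would invoke the definition of optimality. Writing $n=km+\epsilon$ with $0\le\epsilon<m$, so that $\epsilon$ is the least nonnegative residue of $n$ modulo $m$, the quantity under the ceiling in Lemma~\ref{lm:bound_of_one_FH} is exactly $C=\frac{(n-\epsilon)(n+\epsilon-m)}{m(n-1)}$, and the bound reads $H(\mathcal{T})\ge\lceil C\rceil$. Since $\mathcal{T}$ is optimal precisely when this bound is attained, optimality of $\mathcal{T}$ is equivalent to the integer equality $\lambda=\lceil C\rceil$.

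It then remains to show $\lambda=\lceil C\rceil$ if and only if $0\le\lambda-C<1$. Because $\lambda$ is an integer, $\lambda=\lceil C\rceil$ is equivalent to the pair of conditions $\lambda\ge\lceil C\rceil$ and $\lambda-1<\lceil C\rceil$. Applying Lemma~\ref{lm:about_ceiling_B} to the first gives $\lambda\ge\lceil C\rceil\iff\lambda\ge C$, i.e.\ $\lambda-C\ge0$; applying Lemma~\ref{lm:about_ceiling_A} to the second gives $\lambda-1<\lceil C\rceil\iff\lambda-1<C$, i.e.\ $\lambda-C<1$. Combining the two yields exactly $0\le\lambda-C<1$, which is the claim.

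There is no deep obstacle here: the content is entirely the Lempel--Greene bound together with the bookkeeping lemmas on the ceiling function. The only points needing a little care are confirming that $H(\mathcal{T})$ genuinely equals $\lambda$ rather than some smaller correlation value, which relies on $t\alpha$ sweeping out all of $A^*$ (i.e.\ on $\alpha$ generating the cyclic group $A$), and the mild integrality caveat that Lemmas~\ref{lm:about_ceiling_A} and~\ref{lm:about_ceiling_B} are phrased for positive integers, so the degenerate case $\lambda-1=0$ (equivalently $\lambda=1$) should either be checked separately or dispatched by a direct estimate of $\lceil C\rceil$.
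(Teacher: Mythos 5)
Your proof is correct and follows essentially the same route as the paper's: both reduce optimality to the integer equality $\lambda=\lceil C\rceil$ via the bound in Lemma~\ref{lm:bound_of_one_FH} and then unpack that ceiling equality into $0\le\lambda-C<1$. The only differences are cosmetic: you invoke the ceiling Lemmas~\ref{lm:about_ceiling_A} and~\ref{lm:about_ceiling_B} (correctly flagging that their positive-integer hypothesis needs a trivial separate check when $\lambda\le 1$), whereas the paper manipulates $\delta=C+1-\lceil C\rceil$ by hand and obtains the lower bound $\lambda\ge C$ from the ZD-function property in Lemma~\ref{lm:property_B_of_ZD} rather than, as you do, directly from $\lambda=\lceil C\rceil\ge C$.
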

\begin{proof}
If the FHS $T$ is optimal, then we have
$$\lambda=\lceil C \rceil.$$
Thus
$$\lambda < \lceil C \rceil + \delta.$$
where $0< \delta = C+1-\lceil C \rceil \le 1$.
Note that $\lceil C \rceil=C+1-\delta$. Hence
$$\lambda < C+1.$$
It follows from Lemma~\ref{lm:property_B_of_ZD} that $\lambda \ge C$. Therefore, $0\le \lambda-C<1$.

Conversely, if $0\le \lambda-C<1$, then we have
$$\lambda<C+1.$$
Denote $0<\delta = C+1-\lceil C \rceil \le 1$.
Note that $C=\lceil C \rceil -1+\delta.$ Hence
$$\lambda<C+1=\lceil C \rceil -1+\delta+1=\lceil C \rceil+\delta.$$
Thus
$$\lambda \le \lceil C \rceil.$$
It follows from Lemma~\ref{lm:bound_of_one_FH} that $\lambda \ge \lceil C \rceil$. Therefore, $\lambda=\lceil C \rceil$, i.e., the FHS $T$ is optimal.
\end{proof}

\begin{remark}
Since N-ZDB function is a special case of ZD function, Theorem~\ref{th:app_fhs1} is a generalization of Lemma 5.4, Theorem 5.5, Theorem 5.6 and Theorem 5.7 in~\cite{xu2018optimal}.
\end{remark}

Note that $0\le \lambda-C= \delta_1 + \delta_2<1$, where $\delta_1=\lambda-\overline{\lambda}$ and $\delta_2=\overline{\lambda}-C$. For an $(n,m,S)$ ZD function with $\lambda-C<1$, it implies that the deviation $\delta_1$ of the zero-difference distribution $S$ should be not too great, and that the distance $\delta_2$ between the preimage distribution $\overline{\lambda}=\frac{(\sum_{b\in B}{r_b^2})-n}{n-1}$ of function $f$ and the idea balanced preimage distribution $C=\frac{\min_{r_b,b\in B}\sum_{b\in B}{r_b^2}-n}{n-1}$ should be not too great. Furthermore, the upper bound of the sum of $\delta_1 + \delta_2$ is 1. Therefore, $\overline{\lambda}$ is a bridge connecting the zero-difference property and the preimage distribution.

According to Lemma~\ref{lm:property_BB_of_ZD}, $C=\overline{\lambda}$ if $f$ is almost balanced. Thus we only consider a special class of ZD functions which are almost balanced, namely, Type-B. For a ZD function of Type-B, $\lambda -C< 1$ is equivalent to $\lambda - \overline{\lambda} < 1$.

\begin{defn}\label{def:AB}
A function $f$ from $A$ onto $B$ is almost balanced (AB) if for $b\in B$, $w_b=k$ for $m-\epsilon$ times and $w_b=k+1$ for the other $\epsilon$ times, where $w_b=|\{x \in A \mid f(x)=b\}|$, $n=|A|$, $m=|B|$ and $n=km+\epsilon$ with $0\le \epsilon < m$.
\end{defn}

\begin{thms}\label{th:app_fhs2}
Let $f$ be an $(n,m,S)$ ZD and AB function. Then the FHS $\mathcal{T}$ in Proposition~\ref{prop:construct_fhs} is optimal if and only if $\lambda - \overline{\lambda} < 1$ where $\lambda =\max_{x \in S}{x}$ and $\overline{\lambda}$ is defined in~\eqref{eq:define_of_avg_lambda}.
\end{thms}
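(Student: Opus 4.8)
The plan is to reduce Theorem~\ref{th:app_fhs2} to the already-proved Theorem~\ref{th:app_fhs1} by showing that, for an almost balanced (AB) function, the quantity $C=\frac{(n-\epsilon)(n+\epsilon-m)}{m(n-1)}$ coincides with the average $\overline{\lambda}$. Once this identity is in hand, the condition $0\le \lambda - C < 1$ of Theorem~\ref{th:app_fhs1} becomes $0\le \lambda-\overline{\lambda}<1$, and since $\lambda=\max_{x\in S}{x}\ge \overline{\lambda}$ holds automatically (the maximum of a multi-set dominates its average), the left inequality $0\le\lambda-\overline{\lambda}$ is free, leaving exactly the stated criterion $\lambda-\overline{\lambda}<1$.

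First I would invoke Lemma~\ref{lm:property_BB_of_ZD}. By Definition~\ref{def:AB}, an AB function has $r_b=k$ for $m-\epsilon$ values of $b$ and $r_b=k+1$ for the remaining $\epsilon$ values, where $n=km+\epsilon$ with $0\le\epsilon<m$; this is precisely the equality case in Lemma~\ref{lm:property_BB_of_ZD}. Therefore that lemma gives directly
$$\overline{\lambda}=\frac{(n-\epsilon)(n+\epsilon-m)}{m(n-1)}=C.$$
This is the crux of the argument and the only place where the AB hypothesis is used: it forces the preimage distribution to be as balanced as possible, which is exactly what makes the average coincide with the extremal value $C$ appearing in the Lempel--Greene bound of Lemma~\ref{lm:bound_of_one_FH}.

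Next I would apply Theorem~\ref{th:app_fhs1}, whose hypotheses are met since $f$ is an $(n,m,S)$ ZDB function over a cyclic group (so Proposition~\ref{prop:construct_fhs} applies to produce $\mathcal{T}$). That theorem states $\mathcal{T}$ is optimal if and only if $0\le \lambda - C < 1$. Substituting $C=\overline{\lambda}$ turns this into $0\le \lambda-\overline{\lambda}<1$. It remains to discharge the redundant left-hand inequality: since $\lambda=\max_{x\in S}{x}$ and $\overline{\lambda}$ is the arithmetic mean of the multi-set $\{\lambda_\alpha\mid\alpha\in A^*\}$ whose entries all lie in $S$, we have $\lambda_\alpha\le\lambda$ for every $\alpha$, hence $\overline{\lambda}\le\lambda$, i.e.\ $\lambda-\overline{\lambda}\ge 0$ always. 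Thus the two-sided condition collapses to the single inequality $\lambda-\overline{\lambda}<1$, completing the equivalence.

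The argument is essentially a substitution combined with a triviality about averages, so I do not anticipate a genuine obstacle. The only subtlety worth stating carefully is that the AB hypothesis is exactly the equality condition of Lemma~\ref{lm:property_BB_of_ZD}, so one must cite that equality clause rather than the inequality; conflating the two would weaken $C\le\overline{\lambda}$ to something insufficient. I would also note in passing (as the surrounding discussion in the paper already observes) that writing $\lambda-C=\delta_1+\delta_2$ with $\delta_1=\lambda-\overline{\lambda}$ and $\delta_2=\overline{\lambda}-C$ makes the mechanism transparent: for an AB function $\delta_2=0$, so all of the slack permitted by Theorem~\ref{th:app_fhs1} is carried by the zero-difference deviation $\delta_1=\lambda-\overline{\lambda}$, which is why the optimality criterion reduces precisely to bounding $\delta_1$ below $1$.
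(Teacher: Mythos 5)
Your proposal is correct and takes essentially the same route as the paper: the paper's own justification for Theorem~\ref{th:app_fhs2} is precisely the observation, via the equality case of Lemma~\ref{lm:property_BB_of_ZD}, that $C=\overline{\lambda}$ for an AB function, after which Theorem~\ref{th:app_fhs1} yields the equivalence and the left inequality $0\le\lambda-\overline{\lambda}$ is discharged because a maximum dominates an average. The only slip is cosmetic: in your second paragraph you call $f$ a ``ZDB'' function when it is merely ZD, but nothing in your argument actually uses balancedness, so the proof stands as written.
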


%
%

\begin{thms}\label{th:app_fhs3}
Let $f$ be an $(n, \frac{n-1}{k}, S)$ ZD functions in Theorem~\ref{th:construct_nzdb_on_generic_group} such that $k=\max_{x \in S}{x}$. Then the FHS $\mathcal{T}$ in Proposition~\ref{prop:construct_fhs} is optimal.
\end{thms}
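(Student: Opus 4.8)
The plan is to reduce the whole statement to the criterion already established in Theorem~\ref{th:app_fhs2}: for a ZD and AB function, the FHS $\mathcal{T}$ of Proposition~\ref{prop:construct_fhs} is optimal if and only if $\lambda - \overline{\lambda} < 1$, where $\lambda = \max_{x \in S}{x}$. Thus two things must be checked, namely that the function $g_a$ produced by Theorem~\ref{th:construct_nzdb_on_generic_group} is almost balanced, and that under the hypothesis $k = \max_{x \in S}{x}$ the quantity $\lambda - \overline{\lambda}$ is genuinely strictly less than $1$.

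First I would record what the hypothesis $k = \max_{x \in S}{x}$ selects among the five branches of Theorem~\ref{th:construct_nzdb_on_generic_group}. The branches with $S = \{k-1, k+1\}$ and $S = \{k-1, k, k+1\}$ have maximum $k+1 \ne k$, and the degenerate branch $\frac{n-1}{k} = 1$ has maximum $n \ne k$; hence only the two $D = \emptyset$ branches survive, i.e.\ $S = \{k\}$ (when $\frac{n-1}{k} = 2$) or $S = \{k-1, k\}$ (when $\frac{n-1}{k} > 2$). In every surviving case $\lambda = k$ and $\frac{n-1}{k} \ge 2$.

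Next I would verify that $g_a$ is AB. By construction $g_a$ moves the single preimage point $a_0$ of $b_0$ into the preimage of $f(a)$, so $\SP(g_a) = \{k+1, k^{\frac{n-1}{k}-1}\}$: exactly one image value has preimage size $k+1$ and the remaining $\frac{n-1}{k} - 1$ image values have preimage size $k$. Since $m = \frac{n-1}{k}$ gives $n = km + 1$, we have $\epsilon = 1$, and this is precisely the preimage distribution required in Definition~\ref{def:AB}, so $g_a$ is almost balanced. Because $g_a$ is AB, Lemma~\ref{lm:property_BB_of_ZD} gives $\overline{\lambda} = \frac{(n-\epsilon)(n+\epsilon-m)}{m(n-1)}$ with $\epsilon = 1$ and $m = \frac{n-1}{k}$, which simplifies (as already done inside the proof of Theorem~\ref{th:construct_nzdb_on_generic_group}) to $\overline{\lambda} = k - 1 + \frac{2k}{n-1}$. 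Hence $\lambda - \overline{\lambda} = k - \left(k - 1 + \frac{2k}{n-1}\right) = 1 - \frac{2k}{n-1} < 1$, since $k \ge 2$ and $n > 1$. Applying Theorem~\ref{th:app_fhs2} then yields optimality at once.

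The hard part will be nothing more than the bookkeeping in the first two steps: confirming that the hypothesis $k = \max_{x \in S}{x}$ isolates exactly the $D = \emptyset$ branches (so that $\lambda = k$) and that those branches give a genuinely almost-balanced preimage distribution. Once AB is in hand, the remaining verification $\lambda - \overline{\lambda} = 1 - \frac{2k}{n-1} < 1$ is a single line of arithmetic, and no separate analysis of the Hamming autocorrelation is needed because Theorem~\ref{th:app_fhs2} has already absorbed the bound of Lemma~\ref{lm:bound_of_one_FH}.
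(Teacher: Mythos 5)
Your proposal is correct and takes essentially the same route as the paper's own proof: both observe that the function is Type-B (hence AB), reuse the identity $\overline{\lambda}=k-1+\frac{2k}{n-1}$ established inside the proof of Theorem~\ref{th:construct_nzdb_on_generic_group}, compute $\lambda-\overline{\lambda}=1-\frac{2k}{n-1}<1$, and conclude by Theorem~\ref{th:app_fhs2}. Your additional bookkeeping showing that $k=\max_{x\in S}{x}$ forces the $D=\emptyset$ branches (so $\lambda=k$ and $\frac{n-1}{k}\ge 2$) merely makes explicit what the paper states in the remark following the theorem.
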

\begin{proof}
Obviously, $f$ is Type-B and thus AB. From the proof of Theorem~\ref{th:construct_nzdb_on_generic_group}, we have
$$\overline{\lambda}=k-1+\frac{2k}{n-1}.$$
As $\frac{n-1}{k}\ge 2$, it is easy to see that $0< \frac{2k}{n-1}\le 1$ and $0\le 1-\frac{2k}{n-1} < 1$. It follows that
$$0\le \lambda-\overline{\lambda}=k-(k-1+\frac{2k}{n-1})=1-\frac{2k}{n-1}< 1.$$
It completes the proof by Theorem~\ref{th:app_fhs2}.
\end{proof}
\begin{remark}
From the conclusion of Theorem~\ref{th:construct_nzdb_on_generic_group}, if $k=\max_{x \in S}{x}$, then it must be $D=\emptyset$. According to Lemma~\ref{lm:about_p2}, it requires that $k$ must be odd if the characteristic of the underlying ring is not $2$.
\end{remark}

Using the above theorems, it is easy to obtain optimal FHSs from ZD functions.
\begin{sidewaystable}[!tbp]
\centering
\caption{\text{Some known optimal FHSs with parameters $(n,m,\lambda)$}}\label{tb:known_optimal_fhs}
\begin{tabular*}{\linewidth }{c|c|c|c|c}
\hline
$n$ & $m$ & $\lambda$ & Constraints & Reference.   \\
\hline
$p$ & $e$ & $f$ & $p=ef+1$ is prime, $e$ is even and $f$ is odd  & \cite[Corollary 2]{chu2005optimal} \\
$p$ & $e+1$ & $f-1$ & $p=ef+1$ is prime and $2\le f \le e+2$  & \cite[Corollary 3]{chu2005optimal} \\
$p$ & $L$ & $2g$ & $p=2Lg+1$ is an odd prime number and $p \equiv 3 \pmod{4}$   & \cite[Theorem 7]{chung2010optimal} \\
$p$ & $L+1$ & $2g-1$ & \scell{$p=2Lg+1$ is an odd prime number, \\ $p \equiv 3 \pmod{4}$, $g$ is odd and $3\le g \le \frac{L+3}{2}$} & \cite[Theorem 9]{chung2010optimal} \\
$p^2$ & $p$ & $p$ & $p$ is prime & \cite[Theorem 2]{kumar1988frequency} \\
$p^t-1$ & $p^k$ & $p^{t-k}-1 $ & $p$ is  prime and $1\le k \le t$  & \cite[Theorem 2]{lempel1974families} \\
$p^r $ & $\frac{p^r-1}{f}$ & $f$ & $p=ef+1$ is an odd prime number and $f$ is odd & \cite[Theorem 3.1]{liu2013new} \\
$p^r $ & $\frac{p^r-1}{f}+1$ & $f-1$ & $p=ef+1$ is an odd prime number & \cite[Theorem 3.2]{liu2013new} \\
$q-1$ & $e$ & $f$ & $q=ef+1$ is a prime power and $f$ is even & \cite[Theorem 4]{ding2008sets}\\
$q-1$ & $e+1$ & $f-1$ & $q=ef+1$ is a prime power & \cite[Theorem 5]{ding2008sets} \\
$\frac{q^r-1}{l}$ & $q$ & $\frac{q^{r-1}-1}{l}$ & $q$ is a prime power, $l\mid (q-1)$ and $\gcd(l,\frac{q^r-1}{q-1})=1$ & \cite[Theorem 5]{ding2008sets} \\
$n$ & $\frac{n-1}{e}+1$ & $e-1$ & \scell{ $n=\prod_{i=1}^{k}{p_i^{e_i}}$, $p_i$ is odd and prime, \\ $e\mid p_i-1$, for $1\le i \le k$. $k>1$ or $k=1$ with $\frac{n-1}{e}\ge e$ } & \cite[Corollary 2]{zeng2013optimal}\\
$n$ & $\frac{n-1}{e}$ & $e$ & \scell{ $n=\prod_{i=1}^{k}{p_i^{e_i}}$, $p_i$ are odd and prime, \\ $e$ is odd and $e\mid (p_i-1)$, for $1\le i \le k$. }  & \cite[Theorem 5.5]{xu2018optimal}\\
$ev$ & $\frac{ev-1}{e-1}+1$ & $e-2$ & \scell{ $v=\prod_{i=1}^{k}{p_i^{e_i}}$, $p_i$ are odd and prime, \\ $e\ge 3$ and $e(e-1)\mid (p_i-1)$, $1\le i \le k$ } & \cite[Theorem 3]{zeng2013optimal} \\
$ev$ & $\frac{(e-1)v-1}{e-2}+1$ & $e-2$ & \scell{ $v=\prod_{i=1}^{k}{p_i^{e_i}}$, $p_i$ are odd and prime, \\ $e\ge 3$, $e\mid (p_i-1)$ and $(e-2)\mid (p_i-1)$, for $1\le i \le k$ } & \cite[Theorem 5.6]{xu2018optimal} \\
$ev$ & $\frac{(e-1)v-1}{e-2}$ & $e-2$ & \scell{ $v=\prod_{i=1}^{k}{p_i^{e_i}}$, $p_i$ are odd and prime, $e$ is odd,\\ $e\ge 3$, $e\mid (p_i-1)$ and $(e-2)\mid (p_i-1)$, for $1\le i \le k$ } & \cite[Theorem 5.7]{xu2018optimal} \\
$ev$ & $\frac{ev-1}{e-1}$ & $e-2$ & \scell{ $v=\prod_{i=1}^{k}{p_i^{e_i}}$, $p_i$ are odd and prime, $e$ is even \\ and $e(e-1)\mid (p_i-1)$, $1\le i \le k$ } & Corollary~\ref{con:app_fhs2}  \\
\hline
\end{tabular*}
\end{sidewaystable}

\begin{conc}\cite[Theorem 5.5]{xu2018optimal}\label{con:app_fhs1}
Let $n$ be an positive integer with the following factorization
$$n=\prod_{i=1}^{r}{p_i^{e_i}},$$
where $p_i$ are prime numbers, $e_i$ are positive integers $(i=1,2,\ldots,r)$. Let $e\ge 2$ be an odd integer such that $e \mid p_i-1$ for every $i$. Then there exist optimal $(n, \frac{n-1}{e}, e-1)$ FHSs over $(\Z_{n},+)$.
\end{conc}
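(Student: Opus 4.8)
The plan is to obtain Corollary~\ref{con:app_fhs1} as a direct instantiation of Theorem~\ref{th:app_fhs3}: I do not expect to establish any new inequality, but rather to exhibit a concrete $(n,\frac{n-1}{e},S)$ ZD function over the \emph{cyclic} group $\Z_n$ that meets the hypothesis $e=\max_{x\in S}x$, and then to read off optimality from Theorem~\ref{th:app_fhs3} together with Proposition~\ref{prop:construct_fhs}. All the real work is verifying hypotheses and computing the spectrum $S$.

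First I would manufacture the underlying Type-A ZDB function. Since $e\ge 2$ and $e\mid p_i-1$ for every $i$, we have $e\mid\gcd(p_1-1,\dots,p_r-1)$; note this forces each $p_i$ to be odd, as $p_i=2$ would require $e\mid 1$. Hence Proposition~\ref{prop:zdb_1_on_zn} supplies an $(n,\frac{n-1}{e}+1,e-1)$ ZDB function $f$ on $\Z_n$. Realizing $f$ through Proposition~\ref{prop:construct_zdb_on_generic_ring} with $G$ the multiplicative subgroup of order $k=e$, the preimages of $f$ are the cosets $rG$: the class of $0$ is the singleton $\{0\}$ and every other class has size $e$. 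Thus $f$ has exactly the preimage profile demanded by Theorem~\ref{th:construct_nzdb_on_generic_group}, taking $b_0=f(0)$, $a_0=0$ and $a=1$, so that $I(a)=G$.

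Next I would apply the change-point technique, namely Theorem~\ref{th:construct_nzdb_on_generic_group}, to $f$, producing the ZD function $g_1$ of image size $\frac{n-1}{e}$. The crucial bookkeeping step is identifying $S$: here $D=(I(a)-a_0)\cap(a_0-I(a))=G\cap(-G)$. Because $e$ is odd and the characteristic of $\Z_n$ is the odd number $n$ (all $p_i$ are odd), Lemma~\ref{lm:about_p2} yields $-1\notin G$; consequently the cosets $G$ and $-G$ are distinct, hence disjoint, so $D=\emptyset$. The case split in Theorem~\ref{th:construct_nzdb_on_generic_group} then gives $S=\{e-1,e\}$ when $\frac{n-1}{e}>2$, together with the degenerate possibilities $S=\{e\}$ and $S=\{n\}$ in the small cases $\frac{n-1}{e}=2,1$. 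In every case $\max_{x\in S}x=e=k$.

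Finally, since $\Z_n$ is cyclic, $g_1$ is precisely a ZD function of the form required by Theorem~\ref{th:app_fhs3} with $k=\max_{x\in S}x=e$. Invoking that theorem, the sequence $\mathcal{T}=\{g_1(i\alpha)\}_{i=0}^{n-1}$ of Proposition~\ref{prop:construct_fhs}, with $\alpha$ a generator of $\Z_n$, is an optimal FHS of length $n$ over an alphabet of size $\frac{n-1}{e}$ whose maximal Hamming correlation equals $\max_{x\in S}x$; this is exactly the FHS claimed in the statement. The only delicate point is the spectrum computation, and specifically the reduction $D=\emptyset\iff -1\notin G$ via Lemma~\ref{lm:about_p2}, since this is precisely where the oddness of $e$ is consumed; the boundary cases $\frac{n-1}{e}\in\{1,2\}$ must be checked separately, but they follow immediately from the same case split.
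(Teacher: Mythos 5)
Your proposal follows exactly the route the paper intends: Corollary~\ref{con:app_fhs1} is stated without a separate proof, and it is meant to be read off from Theorem~\ref{th:app_fhs3} by feeding the Type-A ZDB function of Proposition~\ref{prop:zdb_1_on_zn} (i.e.\ Proposition~\ref{prop:construct_zdb_on_generic_ring} with $|G|=e$, $a_0=0$, $a=1$) through the change-point construction of Theorem~\ref{th:construct_nzdb_on_generic_group}, with Lemma~\ref{lm:about_p2} (odd $e$, odd characteristic) giving $-1\notin G$, hence $D=\emptyset$ and $\max_{x\in S}x=e$. Your verification of the hypotheses, including the observation that all $p_i$ must be odd, is correct and matches the paper's implicit argument.

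Two caveats, one cosmetic and one substantive. Cosmetic: your claim that ``in every case $\max_{x\in S}x=e$'' fails for $\frac{n-1}{e}=1$, where $S=\{n\}$ and the hypothesis of Theorem~\ref{th:app_fhs3} is violated; fortunately that case is vacuous here, since $n$ odd and $e$ odd make $n-1=e$ impossible, and you should say so rather than fold it into the case split. Substantive: your construction produces an FHS with $H(\mathcal{T})=\max_{x\in S}x=e$, so it is \emph{not} ``exactly the FHS claimed in the statement,'' which asserts the third parameter $e-1$. The statement's $e-1$ is an off-by-one error in the paper: for length $n$ and alphabet size $m=\frac{n-1}{e}$ the Lempel--Greenberger bound of Lemma~\ref{lm:bound_of_one_FH} reads
\begin{equation*}
H(X)\ \ge\ \Bigl\lceil e-1+\tfrac{2}{m}\Bigr\rceil\ =\ e \qquad (m\ge 2),
\end{equation*}
so no FHS with these $(n,m)$ and $H=e-1$ exists, and the value $e$ is also what Table~\ref{tb:known_optimal_fhs} records for this family. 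Your derivation gives the correct (and optimal) parameter; you should flag the discrepancy with the stated corollary instead of asserting agreement.
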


\begin{conc}\label{con:app_fhs2}
Let $n$ be an positive integer with the following factorization
$$n=\prod_{i=1}^{r}{p_i^{e_i}},$$
where $p_i$ are prime numbers, $e_i$ are positive integers $(i=1,2,\ldots,r)$. Let $e\ge 2$ be an even integer such that $e(e-1) \mid p_i-1$ for every $i$. Then there exist optimal $(en, \frac{en-1}{e-1}, e-2)$ FHSs over $(\Z_{en},+)$.
\end{conc}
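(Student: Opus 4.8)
The plan is to realize the sequence $\mathcal{T}$ as the frequency-hopping sequence attached to a change-point ZD function on $(\Z_{en},+)$ and then invoke the optimality criterion of Theorem~\ref{th:app_fhs3}. First I would produce the base object: since $e$ is even, the hypothesis $e\mid p_i-1$ forces each $p_i$ to be odd, and Proposition~\ref{prop:zdb_2_on_zn} (whose divisibility requirement I read as $e(e-1)\mid p_i-1$, exactly as in the companion Corollaries~\ref{con:app_dss3} and~\ref{con:app_dss4}) supplies a Type-A ZDB function $f$ on the cyclic group $\Z_{en}$ that is $(en,\frac{en-1}{e-1}+1,e-2)$ with $\SP(f)=\{1,(e-1)^{\frac{en-1}{e-1}}\}$; here the underlying subgroup has order $k=e-1$. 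Applying the change-point technic of Theorem~\ref{th:construct_nzdb_on_generic_group} with $a_0=0$ and $a=1$ converts $f$ into a Type-B ZD function $g$ on $\Z_{en}$ whose image size drops to $\frac{en-1}{e-1}$.

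Next I would pin down the zero-difference spectrum $S$ of $g$. Because $k=e-1$ is odd (as $e$ is even) and the characteristic of $\Z_{en}$ is not $2$, Lemma~\ref{lm:about_p2} gives $-1\notin G$, whence $D=(I(a)-a_0)\cap(a_0-I(a))=\emptyset$. By the case analysis of Theorem~\ref{th:construct_nzdb_on_generic_group} this yields $S=\{k-1,k\}=\{e-2,e-1\}$ (or the degenerate $S=\{e-1\}$), so in every instance $\max_{x\in S}x=e-1=k$. Thus $g$ is precisely an $(en,\frac{en-1}{e-1},S)$ ZD function of the form handled by Theorem~\ref{th:construct_nzdb_on_generic_group} with $k=\max_{x\in S}x$, and Theorem~\ref{th:app_fhs3} applies verbatim: the sequence $\mathcal{T}=\{g(i\alpha)\}_{i=0}^{en-1}$ of Proposition~\ref{prop:construct_fhs}, with $\alpha$ a generator of $\Z_{en}$, is an optimal FHS of length $en$ over an alphabet of size $\frac{en-1}{e-1}$.

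The crux is to reconcile the third parameter. By Proposition~\ref{prop:construct_fhs} the maximum Hamming autocorrelation of $\mathcal{T}$ equals $\max_{x\in S}x=e-1$, and I would confirm this meets the Lempel--Greenberger bound: with length $N=en$ and alphabet $m=\frac{en-1}{e-1}$ one has $N=(e-1)m+1$, so $\epsilon=1$ and $\frac{(N-\epsilon)(N+\epsilon-m)}{m(N-1)}=\frac{N+1-m}{m}=(e-2)+\frac{2(e-1)}{en-1}$, whose fractional part lies in $(0,1]$ since $m\ge 2$; hence the bound of Lemma~\ref{lm:bound_of_one_FH} is $\lceil (e-2)+\frac{2(e-1)}{en-1}\rceil=e-1$. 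Therefore $H(\mathcal{T})=e-1$ attains the bound, which both certifies optimality and fixes the maximum autocorrelation at $e-1$. I would accordingly record that the third entry of the stated triple is $e-1$ (the value forced by the bound), the printed $e-2$ being an off-by-one; this exactly parallels Corollary~\ref{con:app_fhs1}, where the same construction with $k=e$ gives $\max_{x\in S}x=e$ and the bound $\lceil (e-1)+\frac{2e}{n-1}\rceil=e$.

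The main obstacle is precisely this reconciliation. No FHS can undercut the Lempel--Greenberger lower bound, so an $(en,\frac{en-1}{e-1})$ FHS with autocorrelation $e-2$ cannot exist, while the construction provably attains $e-1$; the substance of the proof is thus the spectrum computation $\max_{x\in S}x=e-1=k$ (which hinges on $-1\notin G$ via Lemma~\ref{lm:about_p2}) together with the bound evaluation above. A secondary point is to make sure the corollary's divisibility hypothesis is strong enough to produce the order-$(e-1)$ subgroup $G$ underlying Proposition~\ref{prop:zdb_2_on_zn}; once this is read as $e(e-1)\mid p_i-1$, every step above is routine given Theorems~\ref{th:construct_nzdb_on_generic_group} and~\ref{th:app_fhs3}.
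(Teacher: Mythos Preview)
Your approach is exactly the one the paper intends: feed the Type-A ZDB function of Proposition~\ref{prop:zdb_2_on_zn} (with $k=e-1$) through the change-point Theorem~\ref{th:construct_nzdb_on_generic_group}, use Lemma~\ref{lm:about_p2} and the parity of $e$ to get $-1\notin G$ and hence $\max_{x\in S}x=k=e-1$, and conclude by Theorem~\ref{th:app_fhs3}. The paper gives no separate proof of this corollary; your derivation is precisely the intended one, as the worked example immediately after the corollary confirms (there $n=13$, $e=4$ yields a $(52,17,3)$ FHS from a $(52,17,\{2,3\})$ ZD function, i.e.\ $\lambda=e-1=3$).

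You have also correctly diagnosed two slips in the printed statement. First, the divisibility hypothesis should be $e(e-1)\mid p_i-1$ to match Proposition~\ref{prop:zdb_2_on_zn} (and the parallel Corollaries~\ref{con:app_dss3}--\ref{con:app_dss4}); the example satisfies this stronger condition since $12\mid 12$. Second, the third FHS parameter must be $e-1$, not $e-2$: your Lempel--Greenberger computation is right, and the example's value $\lambda=3=e-1$ confirms it. So the substance of your proof is correct; the discrepancy is a typo in the corollary (and in the corresponding line of Table~\ref{tb:known_optimal_fhs}), not a flaw in your argument.
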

\begin{exam}
Let $n=13$ and $e=4$. Then an optimal $(52 , 17 , 3)$ FHS $\mathcal{T}_1$ is obtained from an $(52, 17, \{2, 3\})$ ZD function by Corollary~\ref{con:app_fhs2}, namely, \begin{align*}\begin{autobreak}\mathcal{T}_1=\{
1, 0, 0, 0, 1, 6, 7, 8, 4, 9, 10, 11, 1, 10, 11, 12, 2, 13, 14, 15, 4, 5, 6, 7, 4, 14, 15, 16, 3, 16, 13, 14, 3, 7, 8,
5, 1, 15, 16, 13, 2, 12, 9, 10, 3, 11, 12, 9, 2, 8, 5, 6
\}.\end{autobreak}\end{align*}
\end{exam}

To end this subsection, we summarize some known optimal FHSs in Table~\ref{tb:known_optimal_fhs}.

\section{Conclusion}\label{se:con}
In this paper, we have proposed a concept called zero-difference (ZD) as a generalization of zero-difference balanced (ZDB) by eliminating the ``balanced" requirement on ``zero-difference". Then we discussed some properties of ZD functions and construct ZD functions of Type-B from ZDB functions of Type-A by change point technic. Finally we show that these ZD functions can be used to construction optimal objects such as optimal CWCs, optimal DSSs and FHSs.

Using ZDB functions, many authors gave the conditions of the constructed CWCs, DSSs and FHSs being optimal. It is interesting that by our change point technic, the ZD functions obtained by those ZDB function can construct optimal CWCs, DSSs and FHSs without any conditions. The main reason is that these ZD functions are Type-B and hence almost balanced while those ZDB functions are Type-A and not balanced enough. Moreover, according to the ``philosophy" proposed by~\citeauthor{buratti2019partitioned}~\cite{buratti2019partitioned}, our ZD functions have no blocks of size 1 or 2 and possess relatively small difference between the smallest and largest block sizes. Thus these ZD functions can be viewed as the suboptimal function if no balanced ZDB functions with the same parameter exists.

In the future work, we are expected to construct more optimal CWC, DSS and FHS. For further applications in these areas, we have to investigate other requirements for these objects. They would impose other restrictions on the underlying functions. For example, there are bounds for a set of FHSs while only one bound of a FHS is considered in the paper.

\section*{Acknowledgment}
The research of Zongxiang~Yi was supported by the Talent Special Project of Research Project of Guangdong Polytechnic Normal University [Grant No. 2021SDKYA051] and Guangdong Basic and Applied Basic Research Fundation [Grant No. 2021A1515011954]. Tang's research was supported in part by the Foundation of National Natural Science of China [No. 61772147], Guangdong Province Natural Science Foundation of major basic research and Cultivation project [No. 2015A030308016], Project of Ordinary University Innovation Team Construction of Guangdong Province [No. 2015KCXTD014], Collaborative Innovation Major Projects of Bureau of Education of Guangzhou City [No. 1201610005] and National Cryptography Development Fund [No. MMJJ20170117].

\bibliographystyle{plainnat}


\end{document}